\theoremstyle{plain}
\newtheorem{thm}{Theorem}[section]
\newtheorem{cor}[thm]{Corollary}
\newtheorem{conj}[thm]{Conjecture}
\newtheorem{lem}[thm]{Lemma}
\newtheorem{prop}[thm]{Proposition}
\theoremstyle{definition}
\newtheorem{defi}{Definition}[section]
\newtheorem{rmk}{Remark}[section]
\newtheorem{example}{Example}[section]
  \DeclareMathOperator{\spec}{Spec}
\def\O{{\mathcal{O}}}
\def\L{{\mathcal{L}}}
\def\F{{\mathbb{F}}}
\def\FF{{\mathcal{F}}}
\def\ZZ{{\mathcal{Z}}}
\def\K{{\mathbb{K}}}
\def\R{{\mathbb{R}}}
\def\A{{\mathcal{A}}}
\def\BB{{\mathcal{B}}}
\def\BB{{\mathscr{B}}}
\def\S{{\mathcal{S}}}
\def\Z{{\mathbb Z}}
\def\Q{{\mathbb Q}}
\def\PP{{\mathscr P}}
\def\P{{\mathbb P}}
\def\C{{\mathbb C}}
\def\coh{{\text{H}}}
\def\CC{{\mathcal C}}
\def\nA{{\bar{\A}}}
\def\spec{\textrm{Spec}}
\def\Pic{\textrm{Pic}}
\def\deg{\textrm{deg}}
\begin{document}

\title[Construcci\'on]{Arrangements of curves and algebraic surfaces}
\author{\textrm{Giancarlo Urz\'ua}}

\email{gian@umich.edu}

\maketitle

\begin{center} {\small \textit{Dedicated to F. Hirzebruch on the occasion of his 80th birthday}} \end{center}

\begin{abstract}
We prove a strong relation between Chern and log Chern invariants of algebraic surfaces. For a given arrangement of curves, we find nonsingular
projective surfaces with Chern ratio arbitrarily close to the log Chern ratio of the log surface defined by the arrangement. Our method is based
on sequences of random $p$-th root covers, which exploit a certain large scale behavior of Dedekind sums and lengths of continued fractions. We
show that randomness is necessary for our asymptotic result, providing another instance of ``randomness implies optimal". As an application over
$\C$, we construct nonsingular simply connected projective surfaces of general type with large Chern ratio. In particular, we improve the
Persson-Peters-Xiao record for Chern ratios of such surfaces.
\end{abstract}

\section{Introduction.} \label{s1}

Let $X$ be a nonsingular projective surface over an algebraically closed field $\K$. The Chern numbers of $X$ are defined via Chern classes of
its sheaf of differentials as $$c_1^2(X):= c_1 \big({\Omega_X^1}^{*} \big)^2 \ \ \ \ \text{and} \ \ \ \ c_2(X):= c_2 \big({\Omega_X^1}^{*}
\big).$$ These numerical invariants are important from the point of view of classification, being the analogues of the genus of a nonsingular
projective curve.

On the other hand, let $(Y,\nA)$ be a nonsingular log surface over $\K$ (see \cite{IitakaAG82,Sakai1}). This means, $Y$ is a nonsingular
projective surface over $\K$, and $\nA$ is a simple normal crossing divisor in $Y$. We think of the pair $(Y,\nA)$ as the open surface $Y
\setminus \nA$ compactified by $\nA$. As in the projective case, one defines the log Chern numbers of $(Y,\nA)$ via Chern classes of its sheaf
of log differentials as $$ \bar{c}_1^2(Y,\nA):= c_1 \big({\Omega_Y^1(\log \nA)}^{*} \big)^2 \ \ \ \ \text{and} \ \ \ \ \bar{c}_2(Y,\nA):= c_2
\big({\Omega_Y^1(\log \nA)}^{*} \big).$$

Let $d\geq 3$ be an integer, and let $Z$ be a nonsingular projective surface over $\K$. An arrangement of $d$ curves $\A$ in $Z$ is a collection
of $d$ nonsingular projective curves $\{ C_1,\ldots,C_d \}$ such that $\bigcap_{i=1}^d C_i = \emptyset$. The pair $(Z,\A)$ uniquely defines a
nonsingular log surface $(Y,\bar{\A})$ by considering the minimal log resolution $\sigma: Y \rightarrow Z$ of $\A$, and defining $\nA:=
\sigma^*(\A)_{\text{red}}$.

The main result of this article is the following strong relation between Chern and log Chern invariants for algebraic surfaces.

\vspace{0.2 cm} \textbf{Theorem \ref{t71}.} Let $Z$ be a nonsingular projective surface over $\K$, and let $\A$ be a \textbf{simple crossing}
\textbf{divisible} arrangement of curves in $Z$ (see Definitions \ref{d51} and \ref{d52}). Consider the associated nonsingular log surface
$(Y,\nA)$, and assume $\bar{c}_2(Y,\nA) \neq 0$. Then, there exist nonsingular projective surfaces $X$ with {\large$\frac{c_1^2(X) }{c_2(X)}$}
arbitrarily close to {\large $\frac{\bar{c}_1^2(Y,\nA)}{\bar{c}_2(Y,\nA)}$}. \vspace{0.3cm}

Simple crossing divisible arrangements are quite abundant in nature. For example, any arrangement of pairwise-transversal nonsingular plane
curves satisfies this condition. Divisible arrangements are key in proving Theorem \ref{t71}, producing partitions of arbitrarily large prime
numbers $p$, which in turn assign multiplicities to the curves in the arrangement. Then, in order to produce nonsingular projective surfaces, we
consider sequences of $p$-th root covers branch along this weighted arrangement.

A central ingredient in the computation of invariants is the occurrence of Dedekind sums and lengths of continued fractions. In Section
\ref{s4}, we show that the effect of multiplicities in the invariants is encoded through these two arithmetic quantities. It turns out that not
all assignments of multiplicities give the asymptotic result of Theorem \ref{t71}. However, random assignments work for that purpose. We use a
certain large scale behavior of Dedekind sums and lengths of continued fractions, recently discovered by Girstmair \cite{Gi03,Gi06}, to prove
the existence of ``good" partitions. These partitions produce the surfaces $X$ in Theorem \ref{t71}. At the same time, we show that random
partitions are ``good" with probability tending to $1$ as $p$ becomes arbitrarily large (see proof of Theorem \ref{t71}).

An interesting phenomenon is that \textbf{random partitions} of prime numbers are indeed necessary for our result. This shows another instance
of the slogan ``randomness implies optimal", the latter meaning asymptotically close to log Chern ratios. We put this in evidence by using a
computer program which calculates the exact values of the Chern numbers involved (see Remark \ref{r71}). In this way, we introduce the notion of
\textbf{random surface} associated to an arrangement of curves (see Definition \ref{d71}).

As an application over the complex numbers, we show that, for certain arrangements of curves, random surfaces provide examples of simply
connected surfaces of general type with large $\frac{c_1^2(X)}{c_2(X)}$ (see Section \ref{s9}). For instance, we prove in Theorem \ref{t91} that
random surfaces associated to line arrangements in $\P^2(\C)$ are simply connected, and have large Chern ratio in general, but not larger than
$\frac{8}{3}$. In addition, we show that the only ones having Chern ratio arbitrarily close to $\frac{8}{3}$ are the random surfaces associated
to the dual Hesse arrangement.

In fact, random surfaces provide a new record for Chern ratios of simply connected surfaces of general type. In $1996$, Persson, Peters and Xiao
\cite{PerPetXiao96} proved that the set of Chern ratios of simply connected surfaces of positive signature is dense in $[2,2.\overline{703}]$.
It is unknown the existence of such surfaces with Chern ratio in the range $(2.\overline{703},3)$, and so the results of Persson, Peters and
Xiao are the best known results in this direction. We remark that any surface $X$ of general type satisfies the Miyaoka-Yau inequality
$c_1^2(X)\leq 3 c_2(X)$, and equality holds if and only if $X$ is a ball quotient, and so its fundamental group is not trivial. In Section
\ref{s10}, we show that random surfaces give examples of simply connected surfaces in the unknown zone. The precise statement is the following.

\vspace{0.2 cm} \textbf{Theorem \ref{t101}.} There exist nonsingular simply connected projective surfaces of general type with Chern ratio
arbitrarily close to $\frac{71}{26}\approx 2.730769$. \vspace{0.3cm}

As one may expect, the scenario is very different when considering surfaces over fields $\K$ of positive characteristic. For instance, any
random surface $X$ associated to a line arrangement in $\P^2(\K)$ satisfies $c_1^2(X) \leq 3 c_2(X)$ (see Theorem \ref{t81}), and, in all
characteristics, we have examples for which their Chern ratio is arbitrarily close to $3$ (see Example \ref{e82}).

\vspace{0.1cm} Throughout this article: $p$ denotes a prime number. If $q$ is an integer with $0<q<p$, we denote by $q'$ the unique integer
satisfying $0<q'<p$ and $qq'\equiv 1 (\text{mod} \ p)$. For positive integers $\{a_1,\ldots,a_r\}$, we denote by $(a_1,\ldots,a_r)$ their
greatest common divisor. If $(a_1,\ldots,a_r)=1$, we call them coprime. For a real number $b$, let $[b]$ be the integral part of $b$, i.e., $[b]
\in \Z$ and $[b]\leq b < [b]+1$. We use the notation $\L^n := \L^{\otimes n}$ for a line bundle $\L$. We write $D \sim D'$ when the divisors $D$
and $D'$ are linearly equivalent. The normalization of a variety $W$ is denoted by $\overline{W}$. Projective curves and surfaces are assumed to
be irreducible.

\vspace{0.1cm} \textbf{Acknowledgments}: I am grateful to my Ph.D. thesis adviser I. Dolgachev, R.-P. Holzapfel, J. Kiwi, K. Girstmair, X.
Roulleau, and J. Tevelev for very valuable discussions. Part of this work was done at the Pontificia Universidad Cat\'olica de Chile. I wish to
thank the Department of Mathematics, and in particular Rub\'{\i} Rodriguez, for the hospitality.

\tableofcontents

\section{$p$-th root covers.} \label{s2}

The $p$-th root cover tool, which we are going to describe in this section, was introduced by H. Esnault and E. Viehweg in \cite{ViVanishing}
and \cite{Es82}. We will follow their approach in \cite[Ch. 3]{EsVi92}, where this tool is presented for varieties over fields of arbitrary
characteristic. Let $\K$ be an algebraically closed field, and let $p$ be a prime number with $p \neq$ Char$(\K)$.

Let $Y$ be a nonsingular projective surface over $\K$. Let $D$ be a nonzero effective divisor on $Y$ such that $D_{\text{red}}$ has simple
normal crossings, as defined in \cite[p. 240]{Laz1}. Let $D=\sum_{i=1}^r \nu_i D_i$ be its decomposition into prime divisors. By definition,
each $D_i$ is a nonsingular projective curve, and $D_{\text{red}}$ has only nodes as singularities.

Assume that there exist a line bundle $\L$ on $Y$ satisfying $$ \L^{p} \simeq \O_Y(D) .$$

We construct from the data $(Y,D,p,\L)$ a new nonsingular projective surface $X$ which represents a ``$p$-th root of $D$". Let $s$ be a section
of $\O_Y(D)$, having $D$ as zero set. This section defines a structure of $\O_Y$-algebra on $\bigoplus_{i=0} ^{p-1} \L^{-i}$ by means of the
induced injection $\L^{-p} \simeq \O_Y(-D) \hookrightarrow \O_Y$. The first step in this construction is given by the affine map $f_1: W
\rightarrow Y$, where $W:= \spec_Y \Big( \bigoplus_{i=0} ^{p-1} \L^{-i} \Big)$ (as defined in \cite[p. 128]{Hartshorne}).

Because of the multiplicities $\nu_i$'s, the surface $W$ might not be normal. The second step is to consider the normalization $\overline{W}$ of
$W$. Let $f_2: \overline{W} \rightarrow Y$ be the composition of $f_1$ with the normalization map of $W$. The surface $\overline{W}$ can be
explicitly described through the following key line bundles.

\begin{defi}
As in \cite{ViVanishing}, we define the line bundles $\L^{(i)}$ on $Y$ as $$ \L^{(i)}:= \L^i \otimes \O_Y\Bigl( - \sum_{j=1}^r \Bigl[\frac{\nu_j
\ i}{p}\Bigr] D_j \Bigr)$$ for $i \in{\{0,1,...,p-1 \}}$. \label{d21}
\end{defi}

\begin{prop} (see \cite[Cor. 3.11]{EsVi92})
The group $G= \Z/ p \Z$ acts on $\overline{W}$ (so that $\overline{W}/G = Y$), and on ${f_2}_* \O_{\overline{W}}$. Moreover, we have $$ {f_2}_*
\O_{\overline{W}} = \bigoplus_{i=0}^{p-1} {\L^{(i)}}^{-1}.$$ This is the decomposition of ${f_2}_* \O_{\overline{Y}}$ into eigenspaces with
respect to this action. \label{p21}
\end{prop}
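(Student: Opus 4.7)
The plan is to analyze $f_2$ in two stages: first construct the $G$-action on the non-normalized cover $W$, then identify the character eigensheaves of $(f_2)_*\O_{\overline{W}}$ by a local computation along $D$.

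Since $p \neq \operatorname{char}(\K)$, I fix a primitive $p$-th root of unity $\zeta \in \K^*$ and identify $G = \Z/p\Z$ with $\mu_p \subset \K^*$. Then $G$ acts on $\A := \bigoplus_{i=0}^{p-1} \L^{-i}$ by letting $\zeta$ act on $\L^{-i}$ as multiplication by $\zeta^i$; this action preserves the $\O_Y$-algebra multiplication induced by $s : \L^{-p} \hookrightarrow \O_Y$ because $\zeta^p = 1$. Hence $G$ acts on $W = \spec_Y \A$ with $W/G = Y$, and by functoriality of normalization on $\overline{W}$ with $\overline{W}/G = Y$ as well. Because $\# G = p$ is coprime to $\operatorname{char}(\K)$, the group algebra $\K[G]$ is semisimple, so $(f_2)_*\O_{\overline{W}}$ decomposes canonically into character eigensheaves $\bigoplus_{i=0}^{p-1} M_i$, each $M_i$ containing the corresponding summand $\L^{-i}$ of $(f_1)_*\O_W$.

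It remains to show $M_i = (\L^{(i)})^{-1}$, which I would verify locally on $Y$. Outside $\operatorname{Supp}(D)$ the section $s$ is a unit, so $f_1$ is étale, $\overline{W} = W$ there, and $M_i = \L^{-i}$; this matches the formula because each $[\nu_j i / p]$ vanishes on that locus. Near a point $y \in \operatorname{Supp}(D)$, the simple normal crossings hypothesis provides local coordinates $x_1, \ldots, x_n$ on $Y$ (with $n \in \{1,2\}$) and a local trivialization of $\L$ in which the cover becomes $\spec \O_{Y,y}[z]/(z^p - u \prod_j x_j^{\nu_j})$ for some $u \in \O_{Y,y}^{\times}$. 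A weight-$i$ element of the fraction field has the form $w = g z^i$ with $g \in K(Y)$, and since $\O_{Y,y}$ is a regular (hence normal) local ring, $w$ is integral over $\O_{Y,y}$ if and only if $w^p = g^p u^i \prod_j x_j^{\nu_j i} \in \O_{Y,y}$, equivalently $\operatorname{ord}_{D_j}(g) \geq -[\nu_j i / p]$ for every $j$. Thus the local weight-$i$ eigenspace of the integral closure is $\L^{-i} \otimes \O_Y\bigl(\sum_j [\nu_j i / p] D_j\bigr) = (\L^{(i)})^{-1}$.

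The delicate point is justifying that checking integrality of the single generator $z^i$ via the height-one valuations along $D$ captures the full integral closure. This rests on the fact that $\O_{Y,y}[z]/(z^p - u \prod x_j^{\nu_j})$ is a complete intersection in a regular local ring, hence Cohen--Macaulay of dimension $2$; by Serre's criterion $R_1 + S_2$, its normalization is determined by its behaviour at the height-one primes, and under the simple normal crossings assumption these obstructions lie precisely along the components of $D$, exactly as controlled by the inequality above. The local identifications glue via $G$-equivariance into the global decomposition $(f_2)_*\O_{\overline{W}} = \bigoplus_{i=0}^{p-1} (\L^{(i)})^{-1}$ claimed in the proposition.
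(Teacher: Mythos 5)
The paper offers no proof of this proposition at all: it is stated with a bare citation to \cite[Cor.\ 3.11]{EsVi92}, so there is nothing internal to compare against. Your argument is correct and is essentially the standard one from that reference: equip $\bigoplus_i \L^{-i}$ with the $\mu_p$-action of weight $i$ on $\L^{-i}$ (compatible with the multiplication induced by $s$ because weights are read mod $p$), use semisimplicity of $\K[G]$ (valid since $p\neq \mathrm{Char}(\K)$) to split the integral closure into character eigensheaves, and identify the weight-$i$ piece by observing that $gz^i$ is integral over the normal domain $\O_{Y,y}$ exactly when $g^p u^i\prod_j x_j^{\nu_j i}\in\O_{Y,y}$, which is a codimension-one condition yielding the twist by $\sum_j\bigl[\nu_j i/p\bigr]D_j$. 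The one place where your justification is slightly off-target is the final paragraph: the reduction to height-one primes does not need Serre's criterion or Cohen--Macaulayness of the cover. What you actually use is (a) the integral closure is a $G$-stable subring of the total quotient ring, hence equals the direct sum of its weight spaces, so it suffices to test elements of the form $gz^i$; and (b) for such an element, integrality is equivalent to $w^p\in\O_{Y,y}$, and membership of an element of $K(Y)$ in the regular (hence normal, hence Krull) ring $\O_{Y,y}$ is checked at height-one primes. Both facts are already in place from your second and third paragraphs, so the proof stands without the $R_1+S_2$ discussion.
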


Therefore, the normalization of $W$ is $$\overline{W}=\spec_Y \Big( \bigoplus_{i=0}^{p-1} {\L^{(i)}}^{-1} \Big).$$ Let us notice that the
multiplicities $\nu_i$'s can always be considered in the range $0 \leq \nu_i < p$. If we change multiplicities from $\nu_i$ to $\bar{\nu}_i$
such that $\bar{\nu}_i \equiv \nu_i (\text{mod} \ p)$ and $0 \leq \bar{\nu}_i < p$ for all $i$, then the corresponding varieties $\overline{W}$
will be isomorphic over $Y$ (see \cite{EsVi92} for example). Therefore, from now on, \underline{we assume $0< \nu_i < p$ for all $i$}.

The surface $\overline{W}$ may be singular, but its singularities are rather mild. They are toric surface singularities \cite[Ch. 5]{Mircea},
also called Hirzebruch-Jung singularities when the ground field is $\C$ \cite[p. 99-105]{BHPV04}. These singularities exactly occur over the
nodes of $D_{\text{red}}$. Let us assume that $D_i \cap D_j \neq \emptyset$ for some $i\neq j$, and consider a point $P \in D_i \cap D_j$. Then,
the construction above shows that the singularity at $f_2^{-1}(P) \in \overline{W}$ is isomorphic to the singularity of the normalization of
$$\spec \ \K[x,y,z]/(z^p-x^{\nu_i}y^{\nu_j}),$$ where $x$ and $y$ can be seen as local parameters on $Y$ defining $D_i$ and $D_j$ respectively.

We denote this isolated singularity by $T(p,\nu_i,\nu_j)$. One can easily check that it is isomorphic to the affine toric surface defined by the
vectors $(0,1)$ and $(p,-q)$ in $\Z^2$, where $q$ is the unique integer satisfying $\nu_i q+\nu_j \equiv 0 (\text{mod} \ p)$ and $0<q<p$
\cite[Ch. 5 p. 5-8]{Mircea}.

\begin{defi}
Let $0<a,b<p$ be integers, and let $q$ be the unique integer satisfying $a q+ b \equiv 0 (\text{mod} \ p)$ and $0<q<p$. Consider the
negative-regular continued fraction $$ \frac{p}{q} = e_1 - \frac{1}{e_2 - \frac{1}{\ddots - \frac{1}{e_s}}},$$ which we abbreviate as
$\frac{p}{q}=[e_1,...,e_s]$. For each isolated singularity $T(p,a,b)$, we define its \underline{length} as $l(q,p):=s$. This quantity is
symmetric with respect to $a,b$ (see Appendix). \label{d22}
\end{defi}

It is well-known how to resolve $T(p,\nu_i,\nu_j)$ by means of toric methods, obtaining the same situation as in the complex case (see \cite[Ch.
5 p. 5-8]{Mircea}). That is, if $\frac{p}{q}=[e_1,...,e_s]$ is the corresponding continued fraction in Definition \ref{d22}, then the
singularity $T(p,\nu_i,\nu_j)$ is resolved by a chain of $l(q,p)$ nonsingular rational curves $\{ E_1,\ldots, E_{l(q,p)} \}$ (see Figure
\ref{f1}), whose self-intersections are $E_i^2=-e_i$.

\begin{figure}[htbp]
\includegraphics[width=13cm]{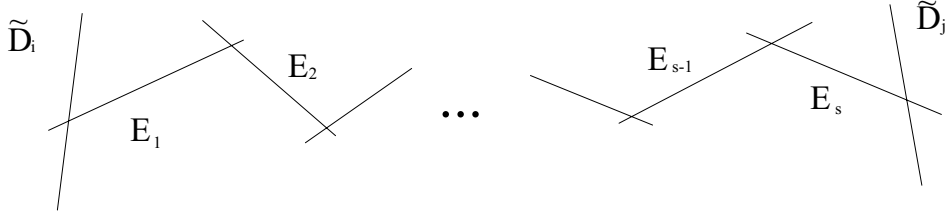}
\caption{Resolution over a point in $D_i \cap D_j$.} \label{f1}
\end{figure}

In this way, the surface $\overline{W}$ has only rational singularities. To see this, let $\ZZ$ be the fundamental cycle of the singularity
$T(p,\nu_i,\nu_j)$ (as defined in \cite{ArtinRatSing}). Hence, by definition, we have $\ZZ=\sum_{i=1}^s E_i$, where $E_i$'s are the
corresponding exceptional curves. In \cite{ArtinRatSing}, it is proved that a normal singularity is rational if and only if $p_a(\ZZ)=0$
(arithmetic genus of $\ZZ$ is zero). But $p_a(\ZZ)=p_a(\overline{\ZZ}) + s-1$, and $p_a(\overline{\ZZ})=1-s$, so the singularity is rational.

The third and last step is the minimal resolution $f_3: X \rightarrow \overline{W}$ of $\overline{W}$. The composition $f_2 \circ f_3$ is
denoted by $f: X \rightarrow Y$. The data $(Y,p,D,\L)$ \underline{uniquely} determines $X$.

\begin{prop}
The variety $X$ is a nonsingular irreducible projective surface, and
\begin{itemize}
\item[1.] There exist isomorphisms $\coh^j(X,\O_X) \simeq \bigoplus_{i=0}^{p-1} \coh^j \big(Y, {\L^{(i)}}^{-1} \big)$ for all $j$. \item[2.] If
$K_X$ and $K_Y$ are the canonical divisors for $X$ and $Y$ respectively, then we have the $\Q$-numerical equivalence $$ K_X \equiv f^*\Bigl( K_Y
+ \frac{p-1}{p} \sum_{i=1}^r D_i \Bigr) + \Delta,$$ where $\Delta$ is a $\Q$-divisor supported on the exceptional locus of $f_3$.
\end{itemize}
\label{p22}
\end{prop}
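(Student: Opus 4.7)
The plan is to prove the three claims in order, leveraging the explicit description of $\overline{W}$ given in Proposition~\ref{p21} and the rationality of the toric singularities of $\overline{W}$, which has already been established via the fundamental-cycle computation just before the proposition.

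For the first claim, nonsingularity of $X$ is immediate because $f_3$ is by definition a minimal resolution of the isolated singularities of $\overline{W}$, each of type $T(p,\nu_i,\nu_j)$. Projectivity holds since $f_2$ is finite (hence projective) and the toric resolution $f_3$ is projective, so $X$ is projective as the target of a composition of projective morphisms. Irreducibility of $\overline{W}$ (hence of $X$) follows from the fact that $p$ is prime and $D$ is nonzero: the function field extension $\K(\overline{W})/\K(Y)$ is obtained by adjoining a $p$-th root of a nontrivial rational function coming from the section $s$, and is therefore an integral field extension of degree $p$.

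For the cohomology isomorphism I would argue in two stages. Since $f_2$ is affine, the higher direct images $R^j(f_2)_* \O_{\overline{W}}$ vanish for $j>0$, and combining this with Proposition~\ref{p21} and the Leray spectral sequence yields
$$ \coh^j(\overline{W},\O_{\overline{W}}) \simeq \bigoplus_{i=0}^{p-1} \coh^j\bigl(Y,{\L^{(i)}}^{-1}\bigr). $$
On the other hand, because the singularities of $\overline{W}$ are rational, one has $(f_3)_*\O_X = \O_{\overline{W}}$ and $R^j(f_3)_*\O_X = 0$ for $j>0$, so a second application of Leray gives $\coh^j(X,\O_X)\simeq \coh^j(\overline{W},\O_{\overline{W}})$, completing this part.

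For the canonical divisor formula I would first derive an expression for $K_{\overline{W}}$ via a Hurwitz-type computation on the smooth locus, and then add the discrepancy of the resolution. Away from the toric singular points, the morphism $f_2$ is a cyclic cover of degree $p$ totally ramified along each $D_i$, with set-theoretic preimage a single reduced curve $\tilde{D}_i$ satisfying $f_2^*D_i = p\tilde{D}_i$; this uses crucially that $\gcd(p,\nu_i)=1$, which is guaranteed by the standing convention $0<\nu_i<p$. The ramification formula then gives the $\Q$-numerical equivalence
$$ K_{\overline{W}} \equiv f_2^*\Bigl(K_Y + \tfrac{p-1}{p}\sum_{i=1}^r D_i\Bigr), $$
valid in codimension one and therefore on all of the normal surface $\overline{W}$. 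Pulling back by $f_3$ and collecting the discrepancy contributed by the exceptional chains $\{E_1,\dots,E_{l(q,p)}\}$ into a $\Q$-divisor $\Delta$ supported on the exceptional locus of $f_3$ yields the claimed formula.

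The main obstacle to handle carefully is precisely the codimension-one ramification step: one must verify that normalization does not disturb the canonical class relation, which reduces to checking that the preimage of each $D_i$ under $f_2$ is an integral curve appearing with multiplicity exactly $p$. Once this is confirmed using $\gcd(p,\nu_i)=1$, the rest of the proposition follows routinely from Proposition~\ref{p21} and the rationality of the toric singularities.
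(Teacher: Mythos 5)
Your treatment of part 1 is exactly the paper's: affineness of $f_2$ together with Proposition \ref{p21} handles the descent to $Y$, and the rationality of the singularities of $\overline{W}$ (plus $ (f_3)_*\O_X=\O_{\overline{W}}$ from normality) handles $f_3$. For part 2 you supply the ``local computations'' that the paper only alludes to, via a tame Hurwitz argument in codimension one on $\overline{W}$ followed by pulling back along $f_3$; this is a legitimate and more explicit route, though you should state where $p\neq \mathrm{Char}(\K)$ enters, namely that the ramification along each $\tilde{D}_i$ is tame, so the different is exactly $(p-1)\tilde{D}_i$. Where you genuinely diverge is irreducibility. The paper proves connectedness cohomologically: from part 1 it gets $h^0(X,\O_X)=1+\sum_{i=1}^{p-1}h^0\big(Y,{\L^{(i)}}^{-1}\big)$, and shows each extra summand vanishes because an effective divisor representing ${\L^{(i)}}^{-1}$ would force all fractional parts $\frac{\nu_j i}{p}-\big[\frac{\nu_j i}{p}\big]$ to be zero, i.e.\ $(\nu_1,\ldots,\nu_r,p)\neq 1$, contradicting $0<\nu_j<p$. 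You instead argue at the generic point that $\K(\overline{W})$ is a degree-$p$ field extension of $\K(Y)$. That works, but ``adjoining a $p$-th root of a nontrivial rational function'' is not by itself enough: for $p$ prime, $z^p-g$ is irreducible over $\K(Y)$ if and only if $g$ is not already a $p$-th power there, and this is precisely where $0<\nu_i<p$ must be invoked --- the divisor of $g$ is congruent to $D$ modulo $p\cdot\mathrm{Div}(Y)$, so $g\in \K(Y)^p$ would force $p\mid \nu_i$ for all $i$. With that one sentence added your route is complete and arguably more direct; the paper's version has the side benefit of producing the vanishing $h^0\big(Y,{\L^{(i)}}^{-1}\big)=0$ for $i\geq 1$ as an explicit byproduct of the same hypothesis.
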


\begin{proof}
Since $\overline{W}$ has rational singularities, we have $R^b{f_3}_* \O_X=0$ for all $b>0$, and so $\coh^j(X,\O_X) \simeq \coh^j(\overline{W},
\O_{\overline{W}})$ for all $j$. But $f_2$ is affine and, by Proposition \ref{p21}, ${f_2}_* \O_{\overline{W}} = \bigoplus_{i=0}^{p-1}
{\L^{(i)}}^{-1}$. Therefore, $\coh^j(X,\O_X) \simeq \bigoplus_{i=0}^{p-1} \coh^j \big(Y, {\L^{(i)}}^{-1} \big)$.

If $X$ is connected, then it is irreducible because it is nonsingular. For $j=0$, we have $h^0(X,\O_X)= \sum_{i=0}^{p-1} h^0(Y,{\L^{(i)}}^{-1})=
1 + \sum_{i=1}^{p-1} h^0(Y,{\L^{(i)}}^{-1})$. Assume $X$ is not connected. Then, $h^0(X,\O_X)\geq 2$, and so there is $i$ such that
$h^0(Y,{\L^{(i)}}^{-1})>0$. In particular, ${\L^{(i)}}^{-1} \simeq \O_Y(H)$ where $H$ is an effective divisor. Hence, by intersecting $H$ with
curves $\Gamma_j$ such that $D_j.\Gamma_j>0$, we have that $\big[ \frac{\nu_j i}{p} \big] - \frac{\nu_j i}{p} = 0$ for all $j$, and so $i \nu_j
\equiv 0(mod \ p)$ for all $j$. This happens if and only if $(\nu_1,...,\nu_r,p)\neq 1$. But this is impossible, since $0< \nu_j <p$ for all
$j$.

Part $2.$ comes from the fact that $p\neq$ Char$(\K)$, and some local computations.
\end{proof}

\begin{rmk}
By using numerical properties of negative-regular continued fractions, one can prove that the $\Q$-divisor $f^*\Bigl( \frac{(p-1)}{p}
\sum_{i=1}^r D_i \Bigr) + \Delta$ is an effective $\Z$-divisor supported on $f^{-1}(D)$. This statement can be used to find $(-1)$- and
$(-2)$-curves in $X$ \footnote{A $(-1)$-curve ($(-2)$-curve) is a nonsingular rational curve with self-intersection equal to $-1$ ($-2$).}. In
several interesting examples, the surface $X$ will not be minimal. \label{r21}
\end{rmk}

\begin{rmk}
Consider the analogue situation in dimension one. Let $Y$ be a nonsingular projective curve over $\K$, and let $D=\sum_{j=1}^r \nu_i D_i$ be a
positive sum of points in $Y$ with $0< \nu_i < p$. Assume there is a line bundle $\L$ on $Y$ such that $\L^p \simeq \O_Y(D)$. Then, we can
perform the $p$-th root cover with data $(Y,p,D,\L)$, obtaining a nonsingular projective curve $X$. The multiplicities $\nu_i$'s do play a role
in the determination of the isomorphism class of $X$ (in \cite{Urzua1}, we worked out these isomorphism classes for certain curves over $\C$),
but they do not play any role in the determination of its genus (Riemann-Hurwitz formula \cite[IV.2]{Hartshorne}). This is not the case for
surfaces, mainly because the $D_i$ may intersect among each other. We will see that their numerical invariants are indeed affected by these
multiplicities. \label{r22}
\end{rmk}

\section{Log surfaces and their log Chern invariants.} \label{s3}

Log surfaces will encode the data of an arrangement of curves. We follow the point of view of Iitaka \cite[Ch. 11]{IitakaAG82}. Much more
information about log surfaces can be found in Iitaka \cite{IiLines78, IitakaBirGeo81}, Sakai \cite{Sakai1}, Kobayashi \cite{Kobayashi1}, and
Miyanishi \cite{Miya01}.

\begin{defi}
Let $Y$ be a nonsingular projective surface over $\K$, and let $\nA$ be an effective \underline{simple normal crossing} divisor on $Y$ (as
defined in \cite[p. 240]{Laz1}, abbreviated SNC). The pair $(Y,\nA)$ is called \underline{log surface}. \label{d31}
\end{defi}

A log surface $(Y,\nA)$ can be thought as the open surface $Y \setminus \nA$ which wants to remember its compactification by $\nA$. In this way,
one studies $Y\setminus \nA$ through the projective surface $Y$ with a modified sheaf of differentials.

\begin{defi} (see  \cite[p. 321]{IitakaAG82})
The sheaf of \underline{log differentials} along $\nA$, denoted by $\Omega_Y^1(\log \nA)$, is the $\O_Y$-submodule of $\Omega_Y^1 \otimes
\O_Y(\nA)$ satisfying
\begin{itemize}
\item[(i)] $\Omega_Y^1(\log \nA)|_{Y\setminus \nA}=\Omega_{Y\setminus \nA}^1$. \item[(ii)] At any point $P$ in $\nA$, we have $\omega_p \in
\Omega_Y^1(\log \nA)_P$ if and only if $\omega_p = \sum_{i=1}^m a_i \frac{dy_i}{y_i} + \sum_{j=m+1}^{2} b_j dy_j$, where $(y_1,y_2)$ is a local
system around $P$ for $Y$, and $\{ y_1\cdots y_m=0 \}$ defines $\nA$ around $P$.
\end{itemize}
\label{d32}
\end{defi}

Hence, $\Omega_Y^1(\log \nA)$ is locally free sheaf of rank two. As in the projective case, we define the \underline{log canonical divisor} as
$K_Y + \nA$, since $\bigwedge^2 \Omega_Y^1(\log \nA) \simeq \O_Y(K_Y + \nA)$. Various log invariants are defined in analogy to the projective
case (see for example \cite{IiLines78}). We are interested in the ones coming from Chern invariants.

\begin{defi}
The log Chern classes of the log variety $(Y,\nA)$ are defined as $\bar{c}_i(Y,\nA):= c_i({\Omega_Y^1(\log \nA)}^*)$ for $i= 1,2$; and the
corresponding \underline{log Chern numbers} as \begin{center} $ \bar{c}_1^2(Y,\nA):= c_1 \big({\Omega_Y^1(\log \nA)}^{*} \big)^2 \ \ \ \
\text{and} \ \ \ \ \bar{c}_2(Y,\nA):= c_2 \big({\Omega_Y^1(\log \nA)}^{*} \big).$ \end{center} \label{d33}
\end{defi}

We now show explicit combinatorial formulas for these numbers.

\begin{prop}
Consider a nonsingular log surface $(Y,\nA)$. Let $\nA= \sum_{i=1}^r D_i$ be the decomposition of $\nA$ into prime divisors. Let $g(D_i)=
\text{dim}_{\K} \coh^1(D_i,\O_{D_i})$, and let $t_2$ be the number of nodes of $\nA$. Then, the log Chern numbers of $(Y,\nA)$ are
$$\bar{c}_1^2(Y,\nA)= c_1^2(Y) - \sum_{i=1}^r D_i^2 + 2 t_2 + 4 \sum_{i=1}^r (g(D_i)-1),$$ and $\bar{c}_2(Y,\nA)= c_2(Y) + t_2 + 2 \sum_{i=1}^r
(g(D_i)-1)$. \label{p31}
\end{prop}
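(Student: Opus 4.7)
The approach is to exploit the residue short exact sequence
\begin{equation*}
0 \longrightarrow \Omega_Y^1 \longrightarrow \Omega_Y^1(\log \nA) \longrightarrow \bigoplus_{i=1}^r \O_{D_i} \longrightarrow 0,
\end{equation*}
which is the standard local-to-global description of log differentials along an SNC divisor (this follows directly from Definition \ref{d32} by taking Poincaré residues). Since the quotient is a direct sum of torsion sheaves supported on the $D_i$, the plan is to apply the Whitney product formula, compute the Chern classes of each $\O_{D_i}$ from the structure sequence $0 \to \O_Y(-D_i) \to \O_Y \to \O_{D_i} \to 0$, and finally translate back to $\bar{c}_i(Y,\nA) = c_i(\Omega_Y^1(\log \nA)^*)$ using that for a rank-$2$ bundle $E$ one has $c_1(E^*) = -c_1(E)$ and $c_2(E^*) = c_2(E)$.

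For the preliminary computation, the resolution of $\O_{D_i}$ by locally free sheaves gives $c(\O_{D_i}) = c(\O_Y)/c(\O_Y(-D_i)) = 1/(1-D_i)$, so $c_1(\O_{D_i}) = D_i$ and $c_2(\O_{D_i}) = D_i^2$. Applying Whitney to $\bigoplus_i \O_{D_i}$ yields $c_1 = \sum_i D_i = \nA$ and $c_2 = \sum_i D_i^2 + \sum_{i<j} D_i \cdot D_j$, and the SNC hypothesis identifies $\sum_{i<j} D_i \cdot D_j$ with $t_2$, the number of nodes. Now Whitney applied to the residue sequence, together with $c_1(\Omega_Y^1) = -c_1(Y) = K_Y$ and $c_2(\Omega_Y^1) = c_2(Y)$, gives
\begin{equation*}
\bar{c}_1(Y,\nA) = c_1(Y) - \nA, \qquad \bar{c}_2(Y,\nA) = c_2(Y) + K_Y \cdot \nA + \sum_i D_i^2 + t_2.
\end{equation*}

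The final step is to eliminate $K_Y \cdot D_i$ in favor of the genus using the adjunction formula $2g(D_i) - 2 = K_Y \cdot D_i + D_i^2$, which is legitimate because each $D_i$ is a nonsingular curve on $Y$. Substituting $K_Y \cdot D_i = 2g(D_i) - 2 - D_i^2$ into the expression for $\bar{c}_2(Y,\nA)$ directly yields the claimed formula. For $\bar{c}_1^2(Y,\nA)$, I would square $c_1(Y) - \nA$, use $\nA^2 = \sum_i D_i^2 + 2t_2$, and again replace $c_1(Y) \cdot D_i = -K_Y \cdot D_i$ via adjunction; collecting terms produces $c_1^2(Y) - \sum_i D_i^2 + 2t_2 + 4\sum_i (g(D_i)-1)$.

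There is no essential obstacle here: the argument is a direct Chern-class manipulation. The only point requiring care is the handling of Chern classes of the torsion sheaves $\O_{D_i}$ (they are not locally free), which must be computed via a locally free resolution rather than by treating them as line bundles; and one must keep the duality conventions straight when passing between $\Omega_Y^1(\log \nA)$ and its dual. Both are routine once set up cleanly.
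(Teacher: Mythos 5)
Your computation is correct, and for $\bar{c}_1^2$ it coincides with the paper's: both amount to expanding $(K_Y+\nA)^2$ and eliminating $K_Y\cdot D_i$ by adjunction. For $\bar{c}_2$, however, you take a genuinely different route. The paper also starts from the residue sequence $0 \to \Omega_Y^1 \to \Omega_Y^1(\log \nA) \to \oplus_i \O_{D_i} \to 0$, but instead of applying the Whitney formula it computes the Euler characteristic $\chi(Y,\Omega_Y^1(\log \nA)^*)$ two ways --- once by Hirzebruch--Riemann--Roch in terms of $\bar{c}_1$, $\bar{c}_2$ and the Todd class, and once by splitting it as $\chi(Y,{\Omega_Y^1}^*) + \sum_i \chi(D_i,\Omega_Y^2\otimes\O_{D_i})$ and invoking Riemann--Roch on each curve --- and then solves for $\bar{c}_2$. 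Your approach extracts $c_2(\Omega_Y^1(\log\nA))$ directly from multiplicativity of the total Chern class on the residue sequence, with $c(\O_{D_i}) = (1-D_i)^{-1}$ computed from the structure sequence; this is more economical, avoids HRR entirely, and makes transparent where the term $t_2 = \sum_{i<j} D_i\cdot D_j$ comes from (it is the degree-two part of $\prod_i(1+D_i+D_i^2)$). The one point you rightly flag --- that the $\O_{D_i}$ are torsion, so their Chern classes must be defined via a locally free resolution, with Whitney then valid on short exact sequences of coherent sheaves on the smooth surface $Y$ --- is the only place where your argument needs more than the vector-bundle Whitney formula, and it is handled correctly. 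Both arguments work over an arbitrary algebraically closed field, as required.
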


\begin{proof}
By definition, $\bar{c}_1^2(Y,\nA)= K_Y^2 + 2 \sum_{i=1}^r K_Y.D_i + \sum_{i=1}^r D_i^2 + 2 \sum_{i<j} D_i.D_j$. Then, by the adjunction formula
\cite[p. 361]{Hartshorne}, the result follows.

We compute the second log Chern number via the Hirzebruch-Riemann-Roch Theorem \cite[p. 432]{Hartshorne}. We apply this Theorem to obtain the
equality $$\chi(Y,\Omega_Y^1(\log \nA)^*)= \deg \big(\text{ch}(\Omega_Y^1(\log \nA)^*).\text{td}({\Omega_Y^1}^*) \big).$$ Then, we consider the
residual exact sequence $$0 \rightarrow \Omega_Y^1 \rightarrow \Omega_Y^1(\log \nA) \rightarrow \oplus_{i=1}^{r} \O_{D_i} \rightarrow 0,$$ to
compute $\chi(Y,\Omega_Y^1(\log \nA)^*)= \sum_{i=1}^r \chi(D_i, \Omega_Y^2 \otimes \O_{D_i}) + \chi(Y,{\Omega_Y^1}^*)$. Finally, by using the
Riemann-Roch Theorem for curves and by developing the summands involved, we find the formula for $\bar{c}_2(Y,\nA)$.
\end{proof}

\section{Numerical invariants of $p$-th root covers.} \label{s4}

In this section, we compute the Chern numbers of surfaces $X$ coming from $p$-th root covers with data $(Y,D,p,\L)$ (as in Section \ref{s2}). As
before, let $p$ be a prime number. Let $Y$ be a nonsingular projective surface over $\K$, and let $D$ be a nonzero effective divisor on $Y$. We
write its prime decomposition as $D=\sum_{i=1}^r \nu_i D_i$. Assume that $D_{\text{red}}$ is a simple normal crossing divisor, and that
$0<\nu_i<p$ for all $i$. Assume that there exists a line bundle $\L$ on $Y$ such that $\L^p \simeq \O_{Y}(D)$. Let $f:X \rightarrow Y$ be the
$p$-th root cover over $Y$ along $D$.

By Proposition \ref{p22}, the Riemann-Roch Theorem and Serre's duality, we have
$$q(X)=q(Y) + \sum_{i=1}^{p-1} h^1(Y, {\L^{(i)}}^{-1}), \ \ \ p_g(X) = p_g(Y) + \sum_{i=1}^{p-1} h^0(Y, \Omega_Y^2 \otimes \L^{(i)}),$$ and
$$\chi(X,\O_X)= p \chi(Y,\O_Y) + \frac{1}{2} \sum_{i=1}^{p-1} \L^{(i)}.(\L^{(i)}\otimes \Omega_Y^2).$$

Let us develop a little more the expression for $\chi(X,\O_X)$. We have
$$ \sum_{i=1}^{p-1} {\L^{(i)}}^{2} = \sum_{i=1}^{p-1} \frac{i^2}{p^2}
D^2 - \sum_{i=1}^{p-1} \frac{2i}{p} \Big(\sum_{j=1}^{r} \Bigl[\frac{\nu_j i}{p}\Bigr] D_j.D \Big) + \sum_{i=1}^{p-1} \Big(\sum_{j=1}^r
\Bigl[\frac{\nu_j i}{p}\Bigr] D_j \Big)^2,$$ and so
$$ \sum_{i=1}^{p-1} {\L^{(i)}}^{2} = \frac{(p-1)(2p-1)}{6p} \sum_{j=1}^{r} D_j^2 \ +$$ $$ \sum_{j< k} \Bigl(
\frac{(p-1)(2p-1)(\nu_k^2 + \nu_j^2)}{6p \nu_j \nu_k} - \sum_{i=1}^{p-1} \Bigl[\frac{\nu_j i}{p}\Bigr]^2 \frac{\nu_k}{\nu_j} - \sum_{i=1}^{p-1}
\Bigl[\frac{\nu_k i}{p}\Bigr]^2 \frac{\nu_j}{\nu_k} + 2 \sum_{i=1}^{p-1} \Bigl[\frac{\nu_j i}{p}\Bigr] \Bigl[\frac{\nu_k i}{p}\Bigr]
\Big)D_j.D_k,$$ and $$\sum_{i=1}^{p-1} \L^{(i)}.\Omega_Y^2 =  \sum_{j=1}^{r} \sum_{i=1}^{p-1} \Bigl(\frac{\nu_j i}{p} - \Bigl[\frac{ \nu_j
i}{p}\Bigr]\Bigr)D_j.K_Y  = \frac{p-1}{2} \sum_{j=1}^{r} D_j.K_Y.$$

\begin{defi}
Let $p$ be a prime number. Let $q$ be an integer such that $0<q<p$. The \underline{Dedekind sum} associated to the pair $(q,p)$ is defined as
$$s(q,p):= \sum_{i=1}^{p-1} \Bigl(\Bigl(\frac{i}{p}\Bigr)\Bigr)\Bigl(\Bigl(\frac{iq}{p}\Bigr)\Bigr)$$ where $((x))=x-[x]-\frac{1}{2}$ for any
rational number $x$. \label{d41}
\end{defi}

Connections between Dedekind sums and geometry can be found in \cite{HiZa74}. For us, these sums naturally appear when considering the
Riemann-Roch Theorem. The number $s(q,p)$ depends only on the class of $q$ modulo $p$. Also, $s(p-q,p)=-s(q,p)$ and $s(q',p)=s(q,p)$, where $q'$
is the unique integer satisfying $0< q'<p$ and $qq'\equiv 1 ($mod $p)$. We now prove that Dedekind sums precisely measure the effect of the
multiplicities $\nu_i$'s in $\chi(X,\O_X)$.

\begin{prop}
Let $f:X \rightarrow Y$ be the $p$-th root cover associated to the data $(Y,D,p,\L)$. Let $t_2$ be the number of nodes of $D_{\text{red}}$.
Then, $$ \chi(X,\O_X) = p \chi(Y,\O_Y) - \frac{p^2-1}{12p} \sum_{i=1}^r D_i^2 + \frac{p-1}{4} \big(t_2 + 2 \sum_{i=1}^r (g(D_i)-1) \big)  -
\sum_{i<j} s(p-\nu_i' \nu_j,p) D_i.D_j.$$ \label{p41}
\end{prop}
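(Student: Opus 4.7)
The plan is to begin from the formula
$$\chi(X,\O_X) = p\,\chi(Y,\O_Y) + \frac{1}{2}\sum_{i=1}^{p-1}\left({\L^{(i)}}^2 + \L^{(i)}.K_Y\right),$$
which follows from Proposition \ref{p22} via Serre duality and Riemann--Roch. The two explicit expansions for $\sum_{i=1}^{p-1} {\L^{(i)}}^2$ and $\sum_{i=1}^{p-1} \L^{(i)}.K_Y$ displayed just above the proposition reduce the task to: (a) the diagonal contribution $\frac{(p-1)(2p-1)}{6p}\sum_j D_j^2 + \frac{p-1}{2}\sum_j D_j.K_Y$; and (b) for each pair $j<k$, the coefficient of $D_j.D_k$ given by
$$A_{jk} := \frac{(p-1)(2p-1)(\nu_k^2+\nu_j^2)}{6p\,\nu_j\nu_k} - \frac{\nu_k}{\nu_j}S_{j} - \frac{\nu_j}{\nu_k}S_{k} + 2 S_{jk},$$
where $S_{a} := \sum_{i=1}^{p-1}[\nu_a i/p]^2$ and $S_{jk} := \sum_{i=1}^{p-1}[\nu_j i/p][\nu_k i/p]$.

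For part (a), I would apply adjunction $K_Y.D_j = 2g(D_j) - 2 - D_j^2$ to rewrite $\frac{p-1}{4}\sum_j D_j.K_Y$, combine with $\frac{(p-1)(2p-1)}{12p}\sum_j D_j^2$, and use the elementary identity $\frac{(p-1)(2p-1)}{12p} - \frac{p-1}{4} = -\frac{p^2-1}{12p}$. This produces the term $-\frac{p^2-1}{12p}\sum_j D_j^2$ and the summand $\frac{p-1}{2}\sum_j(g(D_j)-1)$. Combined with the node count $t_2 = \sum_{j<k} D_j.D_k$, the proposition then reduces to the single claim
$$\tfrac{1}{2}A_{jk} = \tfrac{p-1}{4} + s(\nu_j'\nu_k, p),$$
which matches the stated formula via the antisymmetry $s(p - q,p) = -s(q,p)$.

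For the cross coefficient I would substitute $[x] = x - \tfrac{1}{2} - ((x))$, valid because $0 < \nu_a < p$ forces $\nu_a i/p \notin \Z$ for $1 \leq i \leq p-1$. Then I would invoke the classical identities
$$\sum_{i=1}^{p-1}((\nu i/p)) = 0, \quad \sum_{i=1}^{p-1}((\nu i/p))^{2} = \frac{(p-1)(p-2)}{12p}, \quad \sum_{i=1}^{p-1} i\cdot((\nu i/p)) = p\,s(\nu,p),$$
together with the change of variable $m \equiv \nu_j i \pmod{p}$, which gives $\sum_{i=1}^{p-1}((\nu_j i/p))((\nu_k i/p)) = s(\nu_j' \nu_k, p)$ (using also $s(q',p)=s(q,p)$). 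Expanding $S_j, S_k, S_{jk}$ this way rewrites $A_{jk}$ as a polynomial expression in $\nu_j, \nu_k$ plus linear combinations of $s(\nu_j,p)$, $s(\nu_k,p)$, and $s(\nu_j'\nu_k, p)$.

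The main obstacle is purely organizational: to verify that the degree-two polynomial pieces in $\nu_j, \nu_k$ collapse to the constant $\tfrac{p-1}{2}$, and that the contributions $2\nu_k\, s(\nu_j,p)$ and $2\nu_j\, s(\nu_k,p)$ coming from $-\frac{\nu_k}{\nu_j}S_j - \frac{\nu_j}{\nu_k}S_k$ cancel pairwise against the mirrored $-2\nu_k\,s(\nu_j,p)$ and $-2\nu_j\,s(\nu_k,p)$ appearing in $2 S_{jk}$, leaving only $2\, s(\nu_j'\nu_k, p)$. This cancellation is forced a priori by the invariance of $A_{jk}$ under replacing $\nu_j, \nu_k$ by their residues modulo $p$, but performing it concretely is the real content of the proof; symmetrically pre-grouping the three integer-part sums before expanding is the cleanest way to carry it out.
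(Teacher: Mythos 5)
Your argument is correct and follows essentially the same route as the paper: both start from $\chi(X,\O_X)=p\chi(Y,\O_Y)+\frac{1}{2}\sum_{i=1}^{p-1}\L^{(i)}.(\L^{(i)}\otimes\Omega_Y^2)$, use adjunction on the diagonal terms, and reduce each node's contribution to showing that the coefficient $A_{jk}$ equals $\frac{p-1}{2}+2s(\nu_j'\nu_k,p)$. The only difference is one of bookkeeping: where the paper quotes the three-term relation expressing $S(a,b;p)=\sum_{i=1}^{p-1}\bigl[\frac{ai}{p}\bigr]\bigl[\frac{bi}{p}\bigr]$ through $s(a'b,p)$, $s(a,p)$, $s(b,p)$, you derive the same cancellation self-containedly from the substitution $[x]=x-\frac{1}{2}-((x))$ and the elementary sawtooth identities, which is a legitimate (and arguably cleaner) way to fill in the step the paper leaves to the reader.
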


\begin{proof}
We temporarily define $S(a,b;p):= \sum_{i=1}^{p-1} \bigl[\frac{a i}{p}\bigr] \bigl[\frac{b i}{p}\bigr]$ for any integers $a,b$ satisfying
$0<a,b<p$. Then, since $\sum_{i=1}^{p-1} \Big(a i - \Bigl[ \frac{ a i}{p} \Bigr] p \Big)^2 = \sum_{i=1}^{p-1} i^2 = \frac{p(p-1)(2p-1)}{6}$, one
can check that $ \sum_{i=1}^{p-1} i \Bigl[\frac{ai}{p}\Bigr] = \frac{1}{12a} (a^2-1)(p-1)(2p-1) - \frac{p}{2a} S(a,a;p).$

One can easily verify (see for example \cite[p. 94]{HiZa74}) that $ s(a,p) = \frac{p-1}{6p}(2ap -a -\frac{3}{2}p) - \frac{1}{p} \sum_{i=1}^{p-1}
i \Bigl[\frac{ai}{p}\Bigr]$, and so $$ s(a,p) = \frac{1}{12ap} (p-1)(2pa^2-a^2-3ap+2p-1) - \frac{1}{2a} S(a,a;p).$$

On the other hand, we have
$$ S(a,b;p)= s(a'b,p) - as(b,p) - bs(a,p) + \frac{p-1}{12p} \big(3p-3pa-3pb+2ab(2p-1) \big).$$

Putting all together, $$-\frac{a}{b}S(b,b;p) - \frac{b}{a} S(a,a;p) + 2 S(a,b;p) = \frac{1-p}{6abp} \big(a^2(2p-1) + b^2(2p-1) -3abp \big) +
2s(a'b;p).$$

We now replace these expressions (taking $a=\nu_i$, $b=\nu_j$) in the formula for $\sum_{i=1}^{p-1} {\L^{(i)}}^{2}$ above. Finally, we use the
adjunction formula to include $g(D_i)$'s.
\end{proof}

\begin{defi}
Let $p$ be a prime number, and let $q$ be an integer with $0<q<p$. Consider the negative-regular continued fraction
$\frac{p}{q}=[e_1,\ldots,e_{l(q,p)}]$. We define the \underline{canonical part} of the pair $(q,p)$ as $$c(q,p):= \frac{q+q'}{p} +
\sum_{i=1}^{l(q,p)}(e_i-2).$$ \label{d42}
\end{defi}

To each point in $D_i \cap D_j$ with $i<j$, we associate the negative-regular continued fraction $\frac{p}{q}=[e_1,\ldots,e_{l(q,p)}]$, where
$q=p-\nu'_i \nu_j$ (the numbers $\nu'_i \nu_j$ are always taken in $\{1,2,\ldots,p-1\}$). Hence, each node of $D_{\text{red}}$ has its
corresponding canonical part $c(p-\nu'_i \nu_j,p)$.

\begin{prop}
Let $f:X \rightarrow Y$ be the $p$-th root cover associated to the data $(Y,D,p,\L)$. Let $t_2$ be the number of nodes of $D_{\text{red}}$.
Then, $$ c_1^2(X) = p \bar{c}_1^2(Y,D_{\text{red}}) - 2\Big(t_2 + 2 \sum_{i=1}^r (g(D_i)-1) \Big) + \frac{1}{p} \sum_{i=1}^r D_i^2 - \sum_{i<j}
c(p-\nu_i' \nu_j,p) D_i.D_j.$$ \label{p42}
\end{prop}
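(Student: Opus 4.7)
The plan is to take the $\Q$-numerical expression for $K_X$ given by Proposition \ref{p22}(2),
$$K_X \equiv f^*\Bigl(K_Y + \tfrac{p-1}{p}\nA\Bigr) + \Delta,$$
with $\nA = D_{\text{red}}$ and $\Delta$ a $\Q$-divisor supported on the exceptional locus of $f_3$, and then to square it. Since every irreducible component of $\Delta$ is contracted by $f$ to a point of $Y$, the projection formula gives $f^*(K_Y + \tfrac{p-1}{p}\nA)\cdot \Delta = 0$, and so
$$c_1^2(X) \;=\; p\Bigl(K_Y + \tfrac{p-1}{p}\nA\Bigr)^{\!2} + \Delta^2.$$

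The first summand is routine algebra. Using $\nA^2 = \sum_i D_i^2 + 2 t_2$ together with the adjunction identity $K_Y \cdot D_i = 2 g(D_i)-2-D_i^2$, one expands and recognizes, via Proposition \ref{p31}, the combination $c_1^2(Y) + 4\sum_i(g(D_i)-1) + 2 t_2 - \sum_i D_i^2$ as $\bar{c}_1^2(Y,\nA)$. This produces $p\,\bar{c}_1^2(Y,\nA)$ together with the corrections $\tfrac{1}{p}\sum_i D_i^2 - 2\bigl(t_2 + 2\sum_i(g(D_i)-1)\bigr)$ predicted by the statement, modulo a residual multiple of $t_2$ to be reconciled with $\Delta^2$.

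For $\Delta^2$, the chains $\{E_k^{(P)}\}$ resolving distinct singularities of $\overline{W}$ are disjoint, so $\Delta^2 = \sum_P \Delta_P^2$, with $\Delta_P$ the part of $\Delta$ over a node $P\in D_i\cap D_j$. Writing $\Delta_P = \sum_{k=1}^s \alpha_k E_k$ and intersecting the defining equivalence for $K_X$ with each $E_k$, while observing that $f^*(\cdot)\cdot E_k = 0$ for any pullback from $Y$, reduces the determination of the $\alpha_k$'s to the tridiagonal linear system
$$\alpha_{k-1} - e_k \alpha_k + \alpha_{k+1} \;=\; e_k - 2, \qquad k=1,\ldots,s, \quad \alpha_0 = \alpha_{s+1} = 0,$$
where $\tfrac{p}{q}=[e_1,\ldots,e_s]$ and $q = p - \nu_i'\nu_j$. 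The coefficient matrix has determinant $\pm p$, and the $\alpha_k$'s are expressible through the convergents of the continued fraction. Since $\Delta_P^2 = \sum_k \alpha_k (e_k - 2)$, the local computation reduces to evaluating this one-dimensional sum.

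The main obstacle is the closed-form identity
$$\sum_{k=1}^s \alpha_k (e_k - 2) \;=\; \tfrac{2(p-1)}{p} - c(q,p),$$
which, given $c(q,p) = \tfrac{q+q'}{p} + \sum_k(e_k-2)$, is a reciprocity-type statement for negative-regular continued fractions and is the natural companion of the Dedekind-sum manipulation already carried out in Proposition \ref{p41}. Granting it via the arithmetic properties of continued fractions collected in the Appendix, summing $\Delta^2 = \sum_P \Delta_P^2$ over all $t_2 = \sum_{i<j} D_i \cdot D_j$ nodes of $\nA$ produces $\tfrac{2(p-1)}{p}\,t_2 - \sum_{i<j} c(p-\nu_i'\nu_j,p)\,D_i\cdot D_j$, which combined with the expansion of the first summand yields the stated formula for $c_1^2(X)$.
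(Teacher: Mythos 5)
Your proposal is correct and follows essentially the same route as the paper: one squares the $\Q$-numerical expression for $K_X$ from Proposition \ref{p22}, uses the projection formula to kill the cross term and identify the pullback part with $p\,\bar{c}_1^2(Y,\nA)$ plus the stated corrections via Proposition \ref{p31}, and reduces $\Delta^2$ node by node to $\sum_k \alpha_k(e_k-2)$. The closed-form identity you isolate as the main obstacle, $\sum_k \alpha_k(e_k-2) = \tfrac{2(p-1)}{p} - c(q,p)$, is exactly the content of Lemma \ref{ap2} in the Appendix, which the paper's proof likewise invokes at this point.
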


\begin{proof}
As we saw in Proposition \ref{p22}, we have the $\Q$-numerical equivalence $$K_X \equiv f^*\Bigl( K_Y + \frac{p-1}{p} \sum_{i=1}^r D_i \Bigr) +
\Delta,$$ where $\Delta$ is a $\Q$-divisor supported on the exceptional locus of $f_3$. Let $\{P_1,\ldots, P_{t_2} \}$ be the set of nodes of
$D_{\text{red}}$. We know that, in the $p$-th root cover procedure, the singularity coming from $P_k$ is resolved by a chain $\{E_{1,k}, \ldots,
E_{s,k} \}$ of $\P^1(\K)$'s. If $\frac{p}{q_k}=[e_1,\ldots,e_{l(q_k,p)}]$, where $P_k \in D_i \cap D_j$ and $q_k=p-\nu'_i \nu_j$ , then
$s=l(q_k,p)$ and $E_{i,k}^2=-e_{i,k}$ for all $i$. Therefore, we can write $\Delta = \sum_{k=1}^{t_2} \Delta_k$, where $\Delta_k=
\sum_{i=1}^{l(q_k,p)} \alpha_{i,k} E_{i,k}$. In this way, we compute
$$c_1^2(X)= p c_1^2(Y) - \frac{p^2-1}{p} \sum_{i=1}^r D_i^2 + 4(p-1) \sum_{i=1}^r (g(D_i)-1) + 2\frac{(p-1)^2}{p} t_2 + \sum_{k=1}^{t_2}
\Delta_k^2.$$ It is not hard to see that $\Delta_k^2 = \sum_{i=1}^{l(q_k,p)} \alpha_{i,k} (e_{i,k}-2)$. By Lemma \ref{ap2}, we have $$\Delta_k^2
= \sum_{i=1}^{l(q_k,p)} (2-e_{i,k}) - \frac{q_k+q_k'}{p} + 2 \frac{p-1}{p}.$$ We finally replace these sums above, and rearrange terms (and use
Proposition \ref{p31}).
\end{proof}

\begin{example}
Consider $0<q<p$, and the corresponding $\frac{p}{q}=[e_1,\ldots,e_{l(q,p)}]$. For now, we assume $\K=\C$. Let $Y=\P^2(\C)$ and $D= L_1+\ldots
+L_{r-1}+(p-q)L_r$ where $\{L_1,\ldots,L_r \}$ is a general line arrangement (in particular, only nodes as singularities), and $r=q+1$. Hence,
$\O_Y(1)^p \simeq \O_Y(D)$, and so we have the $p$-th root cover $f:X \rightarrow Y$ from the data $(Y,p,D,\L=\O_Y(1))$. Then, we apply
Propositions \ref{p41} (and $s(1,p)=\frac{(p-1)(p-2)}{12p}$) to compute
$$ \chi(X,\O_X)= p - \frac{(p^2-1)r}{12p} - \frac{1}{8}(p-1)r(5-r) + \frac{1}{24p} (r-1)(r-2)(p-1)(p-2) + (r-1) s(p-q,p).$$

We now use the underlying complex topology to find the topological Euler Characteristic $\chi_{\text{top}}(X)$ of $X$. The following is a
well-known topological lemma: if $B$ be a complex projective variety and $A\subseteq B$ a subvariety such that $B\setminus A$ is nonsingular,
then $\chi_{\text{top}}(B) = \chi_{\text{top}} (A) + \chi_{\text{top}}(B\setminus A)$. By repeatedly applying this lemma, and by using the fact
$c_2(X)=\chi_{\text{top}}(X)$, we find $$ c_2(X) = 3p + (1-p) \frac{r(5-r)}{2} + \frac{(r-1)(r-2)}{2}(p-1)+ (r-1) l(q,p).$$ Using Proposition
\ref{p42}, we find $c_1^2(X)$. Finally, the Noether's formula $12 \chi(X,\O_X) = c_1^2(X) + c_2(X)$ gives us the relation $$12s(q,p) -
\sum_{i=1}^{l(q,p)} e_i + 3l(q,p) = \frac{q+q'}{p}.$$

We notice that this formula was found by Holzapfel in \cite[Lemma 2.3]{Hol88} using the original definition of Dedekind sums via Dedekind
$\eta$-function. For a similar formula involving regular continued fractions see \cite{Ba77,Hicke77}. A direct consequence is the computation of
$c_2(X)$. \label{e41}
\end{example}

\begin{prop}
Let $f:X \rightarrow Y$ be the $p$-th root cover associated to the data $(Y,D,p,\L)$. Let $t_2$ be the number of nodes of $D_{\text{red}}$.
Then, $$c_2(X) = p \bar{c}_2(Y,D_{\text{red}}) - \Big(t_2 + 2 \sum_{i=1}^r (g(D_i)-1) \Big) + \sum_{i<j} l(p-\nu_i' \nu_j,p) D_i.D_j.$$
\label{p43}
\end{prop}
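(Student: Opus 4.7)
The plan is to derive this from Noether's formula $12\chi(X,\O_X)=c_1^2(X)+c_2(X)$, which holds for any nonsingular projective surface $X$. So I will set $c_2(X) = 12\chi(X,\O_X) - c_1^2(X)$ and substitute the two formulas already computed in Propositions \ref{p41} and \ref{p42}. Since $c_2(X)$ is now expressed entirely in terms of data on $Y$ and arithmetic quantities attached to the nodes, the proof reduces to three bookkeeping tasks: eliminate the self-intersection terms $\sum_i D_i^2$, eliminate the terms on the Dedekind-sum side in favor of lengths, and rewrite the remainder on the $Y$-side in log invariants.

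For the self-intersection terms, Noether's formula on $Y$ gives $12p\chi(Y,\O_Y)=p\,c_1^2(Y)+p\,c_2(Y)$, and Proposition \ref{p31} lets me substitute $c_1^2(Y)=\bar{c}_1^2(Y,D_{\text{red}})+\sum_i D_i^2-2(t_2+2\sum_i (g(D_i)-1))$ and $c_2(Y)=\bar{c}_2(Y,D_{\text{red}})-(t_2+2\sum_i(g(D_i)-1))$. The $\sum_i D_i^2$ contribution from $12p\chi(Y,\O_Y)-p\bar{c}_1^2(Y,D_{\text{red}})$ combines with $-12\cdot\tfrac{p^2-1}{12p}\sum_i D_i^2$ from Proposition \ref{p41} and $-\tfrac{1}{p}\sum_i D_i^2$ from Proposition \ref{p42} to give $p\sum_i D_i^2-\tfrac{p^2-1}{p}\sum_i D_i^2-\tfrac{1}{p}\sum_i D_i^2=0$, as expected.

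The heart of the argument is the node-by-node identity
\[
c(q,p) - 12\,s(q,p) = l(q,p) \qquad (0<q<p),
\]
which I would apply with $q=p-\nu_i'\nu_j$ to convert the difference $\sum_{i<j}\bigl(c(p-\nu_i'\nu_j,p)-12\,s(p-\nu_i'\nu_j,p)\bigr)D_i.D_j$ arising from the two propositions into $\sum_{i<j}l(p-\nu_i'\nu_j,p)\,D_i.D_j$. By the definition $c(q,p)=\tfrac{q+q'}{p}+\sum_{i=1}^{l(q,p)}(e_i-2)$, this identity is exactly the relation $12s(q,p)-\sum_i e_i+3l(q,p)=\tfrac{q+q'}{p}$ established in Example \ref{e41} (and attributed to Holzapfel). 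This is a purely arithmetic identity between Dedekind sums and negative-regular continued fractions, so although Example \ref{e41} derived it over $\C$ using $c_2(X)=\chi_{\text{top}}(X)$, it is valid in our setting of arbitrary characteristic. This step is the main obstacle in the sense that the whole formula pivots on the cancellation $c-12s=l$; everything else is routine collection of coefficients.

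Finally, I collect the coefficient of $t_2+2\sum_i(g(D_i)-1)$: from $12p\chi(Y,\O_Y)-p\bar{c}_1^2$ one gets $-3p$ after rewriting via Proposition \ref{p31}; from Proposition \ref{p41} one gets $+3(p-1)$; and from Proposition \ref{p42} one gets $+2$. Their sum is $-3p+3(p-1)+2=-1$, matching the claimed coefficient. Assembling all three pieces yields
\[
c_2(X)=p\,\bar{c}_2(Y,D_{\text{red}})-\bigl(t_2+2\textstyle\sum_{i=1}^r(g(D_i)-1)\bigr)+\sum_{i<j}l(p-\nu_i'\nu_j,p)\,D_i.D_j,
\]
as required.
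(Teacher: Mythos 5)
Your proposal is correct and follows exactly the paper's own route: Noether's formula combined with Propositions \ref{p41} and \ref{p42}, the arithmetic identity $12s(q,p)+l(q,p)=c(q,p)$ from Example \ref{e41} applied at each node, and Proposition \ref{p31} to surface $\bar{c}_2(Y,D_{\text{red}})$. Your coefficient bookkeeping checks out, and your remark that the Holzapfel identity, though derived in Example \ref{e41} via complex topology, is a characteristic-independent arithmetic fact is a worthwhile clarification the paper leaves implicit.
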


\begin{proof}
We compute $c_2(X)$ via Noether's formula, and Propositions \ref{p41} and \ref{p42}. To include the terms $l(p-\nu_i' \nu_j,p)$, we use the
formula in Example \ref{e41}, which reads $$12s(p-\nu'_i \nu_j,p) + l(p-\nu'_i \nu_j,p) = c(p-\nu'_i \nu_j,p).$$ Finally, the log Chern number
$\bar{c}_2(Y,D_{\text{red}})$ appears from Proposition \ref{p31}.
\end{proof}

\section{Simple crossing divisible arrangements.} \label{s5}

In this section, we define the key arrangements $\A$ which will produce surfaces $X$ via $p$-th root covers branch along $\nA$, the minimal log
resolution of $\A$. The divisor $D$ in Section \ref{s4} enters to the picture via the equality $D_{\text{red}}=\nA$.

Let $d\geq 3$ be an integer, and let $Z$ be a nonsingular projective surface over $\K$. An \underline{arrangement of $d$ curves} $\A$ in $Z$ is
a set $\{C_1,\ldots,C_d \}$ of $d$ nonsingular projective curves such that $\bigcap_{i=1}^d C_i = \emptyset$. We loosely consider $\A$ as the
set $\{ C_1,\ldots,C_d \}$, or as the divisor $C_1 + \ldots + C_d$, or as the curve $\bigcup_{i=1}^d C_i$. We say that $\A$ is defined over
$\mathbb{L} \subseteq \K$ if all the curves in $\A$ are defined over $\mathbb{L}$.

\begin{defi}
An arrangement of $d$ curves $\A$ in $Z$ is \underline{simple crossing} if $C_i$ and $C_j$ intersect transversally for all $i\neq j$. For
$1<n<d$, an \underline{$n$-point} of $\A$ is a point in $\A$ contained in exactly $n$ curves of $\A$. The \underline{number of $n$-points} of
$\A$ is denoted by $t_n$. \label{d51}
\end{defi}

\begin{defi}
An arrangement of $d$ curves $\A$ in $Z$ is \underline{divisible} if $\A$ splits into $v\geq 1$ arrangements of $d_i$ curves $\A_i$ (so $d_i
\geq 3$) satisfying:
\begin{itemize}
\item[1.] For all $i\neq j$, $\A_i$ and $\A_j$ are disjoint as sets of curves. \item[2.] For each $i \in \{1,\ldots, v\}$, there exists a line
bundle $\L_i$ on $Z$ such that, for each $C$ in $\A_i$, we have $\O_Z(C) \simeq \L_i^{u(C)}$ for some integer $u(C)>0$.
\end{itemize}
Given $i \in \{1,\ldots, v\}$, we can and do assume that the corresponding $u(C)$'s are coprime. \label{d52}
\end{defi}

\begin{example}
Consider $Z=\P^2(\K)$, and arrangements of $d$ lines $\A=\{L_1,\ldots,L_d \}$ in $Z$. We recall that, by definition, $d\geq 3$ and $t_d=0$.
These arrangements are simple crossing, and satisfy ${d \choose 2} = \sum_{n \geq 2} {n \choose 2} t_n $. In addition, they are divisible by
taking $v=1$, $\L_1=\O_Z(1)$, and $u(L_i)=1$ for all $i$. \label{e51}
\end{example}

\begin{example}
Consider $Z=\P^2(\K)$, and an arrangement of $d$ nonsingular plane curves $\A=\{C_1,\ldots,C_d \}$. Let $m= \big(\deg(C_1),\ldots,\deg(C_d)
\big)$. Then, $\A$ is divisible by taking $v=1$, $\L_1=\O_{Z}(m)$, and $u(C_i)=\frac{\deg(C_i)}{m}$. Of course, they may or may not be simple
crossing. \label{e52}
\end{example}

\begin{example}
For examples with $v>1$, we take $Z= \P^1(\K) \times \P^1(\K)$. Let us denote the classes of the Picard group of $Z$ by $\O_Z(a,b)$. Assume we
have three simple crossing arrangements, defined as $\A_1=\{A_1,\ldots,A_{d_1} \}$ with $\O_Z(A_i) \simeq \O_Z(1,0)$, $\A_2=\{B_1,\ldots,B_{d_2}
\}$ with $\O_Z(B_i) \simeq \O_Z(0,1)$, and $\A_3=\{C_1,\ldots,C_{d_3} \}$ with $\O_Z(C_i) \simeq \O_Z(1,1)$. Then, the arrangement $\A=\A_1 \cup
\A_2 \cup \A_3$ is naturally a simple crossing divisible arrangement of $d$ curves, with $v=3$, $d=d_1+d_2+d_3$, $\L_1=\O_Z(1,0)$,
$\L_2=\O_Z(0,1)$, $\L_3=\O_Z(1,1)$, and $u(C)=1$ for all curve $C$ in $\A$. \label{e53}
\end{example}

Let $\A$ be a simple crossing arrangement in $Z$. Each pair $(Z,\A)$ produces a unique nonsingular log surface $(Y,\nA)$ by performing blow-ups
at all the $n$-points of $\A$ with $n\geq 3$. In this way, if $\sigma: Y \rightarrow Z$ is the corresponding blow-up map, we define $\nA:=
\sigma^{*}(\A)_{\text{red}}$. The SNC divisor $\nA$ contains the proper transforms of the curves in $\A$, and also the exceptional divisors over
each $n$-point with $n \geq 3$. We said that $(Y,\nA)$ is the \underline{associated pair} of $(Z,\A)$, and we write down the prime decomposition
of $\nA$ as $\sum_{i=1}^r D_i$. In this way, we define the \underline{log Chern numbers} associated to $(Z,\A)$ as the log Chern numbers of the
associated pair $(Y,\nA)$. We denote them as $\bar{c}_1^2(Z,\A):= \bar{c}_1^2(Y,\nA)$ and $\bar{c}_2(Z,\A):= \bar{c}_2(Y,\nA)$.

\begin{prop}
Let $\A= \{C_1,\ldots, C_d \}$ be a simple crossing arrangement in $Z$. Then, $$\bar{c}_1^2(Z,\A) = c_1^2(Z) - \sum_{i=1}^d C_i^2 + \sum_{n\geq
2} (3n-4)t_n + 4 \sum_{i=1}^d (g(C_i)-1),$$ and $\bar{c}_2(Z,\A) = c_2(Z) + \sum_{n\geq 2} (n-1)t_n + 2 \sum_{i=1}^d (g(C_i)-1)$. \label{p51}
\end{prop}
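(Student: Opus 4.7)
The plan is to reduce the statement to Proposition \ref{p31} by carefully describing the blow-up $\sigma\colon Y\to Z$ and the prime decomposition of $\bar{\A}$, and then substituting. Since $\A$ is simple crossing, $\sigma$ is the composition of the blow-ups at the $n$-points of $\A$ with $n\geq 3$, a total of $\sum_{n\geq 3} t_n$ points. The standard Chern number formulas under blowing up points give
\[
c_1^2(Y)=c_1^2(Z)-\sum_{n\geq 3} t_n, \qquad c_2(Y)=c_2(Z)+\sum_{n\geq 3} t_n.
\]

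Next I would identify the prime components $D_i$ of $\bar{\A}$. These are the proper transforms $\tilde{C}_i$ (there are $d$ of them) together with one exceptional divisor $E_P$ for each $n$-point $P$ with $n\geq 3$. Since $\sigma$ only blows up smooth points of the $C_i$, we have $g(\tilde{C}_i)=g(C_i)$, and of course $g(E_P)=0$. For self-intersections, $\tilde{C}_i^2 = C_i^2 - \#\{n\text{-points on }C_i\text{ with }n\geq 3\}$ and $E_P^2=-1$; since each $n$-point with $n\geq 3$ sits on exactly $n$ of the curves $C_i$, summing yields
\[
\sum_{i=1}^{r} D_i^2 = \sum_{i=1}^d C_i^2 - \sum_{n\geq 3}(n+1)\,t_n, \qquad \sum_{i=1}^{r}(g(D_i)-1) = \sum_{i=1}^d(g(C_i)-1) - \sum_{n\geq 3} t_n.
\]

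Third, I would count the nodes of $\bar{\A}$. The $2$-points of $\A$ survive as nodes of $\bar{\A}$, contributing $t_2$. Each $n$-point $P$ of $\A$ with $n\geq 3$ is replaced by $n$ transverse intersections $\tilde{C}_i\cap E_P$, contributing $n\,t_n$ nodes for each such $n$. Hence
\[
t_2(\bar{\A}) = t_2 + \sum_{n\geq 3} n\,t_n.
\]

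Finally, I would substitute the four boxed quantities above into the two formulas of Proposition \ref{p31} and simplify. In the $\bar{c}_1^2$ computation, the contributions of each $n$-point with $n\geq 3$ combine as $-1+(n+1)+2n-4 = 3n-4$, which matches the coefficient $2=3\cdot 2-4$ of the original $t_2$ term and so unifies the sum to $\sum_{n\geq 2}(3n-4)t_n$. Analogously, in $\bar{c}_2$ the contributions combine as $1+n-2 = n-1$, matching the coefficient $1=2-1$ of $t_2$. There is no real obstacle here; the only thing to watch is bookkeeping of the $n\geq 3$ terms so that the final formulas correctly absorb the blow-up corrections into clean sums indexed by $n\geq 2$.
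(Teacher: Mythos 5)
Your proof is correct and is exactly the argument the paper intends: the paper's own proof is the single line ``this is a straightforward application of Proposition \ref{p31},'' and your computation (blow-up formulas for $c_1^2$ and $c_2$, the identification of the components of $\bar{\A}$ with their genera and self-intersections, the node count $t_2+\sum_{n\geq 3}n\,t_n$, and the final bookkeeping giving coefficients $3n-4$ and $n-1$) is precisely the omitted verification. No gaps.
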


\begin{proof}
This is a straightforward application of Proposition \ref{p31}.
\end{proof}

\section{Divisible arrangements and partitions of prime numbers.} \label{s6}

Let $Z$ be a nonsingular projective surface over $\K$, and let $\A= \{C_1,\ldots, C_d \}$ be a simple crossing divisible arrangement in $Z$. In
this section, we produce surfaces $X$ from pairs $(Z,\A)$ via $p$-th root covers. Let $(Y,\nA)$ be the associated pair of $(Z,\A)$, and let
$\sigma: Y \rightarrow Z$ be the corresponding blow-up map (as in the previous section). Since $\A$ is a divisible arrangement, it decomposes as
a disjoint union (as sets) of subarrangements $\A_1 \sqcup \ldots \sqcup A_v$. We write $\A_i = \{ C_{1,i},\ldots, C_{d_i,i} \}$. Also by the
definition of divisible arrangement, there is a line bundle $\L_i$ and positive integers $u(C_{j,i})$ such that $$\O_Z(C_{j,i}) \simeq
\L_i^{u(C_{j,i})}.$$ We recall that $\big( u(C_{1,i}),\ldots,u(C_{d_i,i}) \big) =1$ for all $i \in \{1,\ldots,v\}$.

Let $p$ be a prime number. Consider the Diophantine linear system of equations $\S(\A)$,

\vspace{0.3cm}
\begin{center} \begin{tabular}{c}
$u(C_{1,1})\mu_{1,1} + u(C_{2,1})\mu_{2,1} + \ldots + u(C_{d_1,1})\mu_{d_1,1} =p$ \\
$u(C_{1,2})\mu_{1,2} + u(C_{2,2})\mu_{2,2} + \ldots + u(C_{d_2,2})\mu_{d_2,2} =p$ \\
$\vdots$ \\
$u(C_{1,v})\mu_{1,v} + u(C_{2,v})\mu_{2,v} + \ldots + u(C_{d_v,v})\mu_{d_v,v} =p.$
\end{tabular}
\end{center}
\vspace{0.3cm}

We think about this system as a bunch of weighted partitions of $p$. For $p$ large enough, $\S(\A)$ has solutions. Actually, it is well-known
that the number of positive integer solutions is equal to (see \cite{BGK01}, for example) $$ \prod_{i=1}^v \Big( \frac{p^{d_i-1}}{(d_i-1)!
u(C_{1,i})u(C_{2,i})\cdots u(C_{d_i,i})} + O(p^{d_i-2}) \Big).$$

Consider a positive solution $\{ \mu_{i,j} \}$ of $\S(\A)$. Notice that $0< \mu_{i,j} < p$ for all $i,j$. On $Y$, we define the divisor $$D:=
\sigma^{*} \Big( \sum_{i=1}^v \sum_{j=1}^{d_i} \mu_{i,j} C_{i,j} \Big).$$ Let us also define the line bundle $\L$ on $Y$ as $$\L := \sigma^*
\Big( \L_1 \otimes \L_2 \otimes \cdots \otimes \L_v \Big). $$ Then, because of $\S(\A)$, we have $$\L^p \simeq \O_Y(D).$$ Let $D= \sum_{i=1}^r
\nu_i D_i$ be the prime decomposition of $D$, and assume $0 < \nu_i < p$. Since we want to have all the exceptional divisors of $\sigma$ in $D$,
we consider solutions $\{ \mu_{i,j} \}$ of $\S(\A)$ which have $0 < \nu_i < p$ for all $i$. This is always possible for large primes.

If $D_i$ is the proper transform of $C_i$, then $\nu_i = \mu_i$. When $D_i$ is the exceptional divisor over an $n$-point, say in $C_{j_1} \cap
\cdots \cap C_{j_n}$, then $\nu_i = \mu_{j_1} + \ldots + \mu_{j_n}$ modulo $p$.

In this way, we perform the $p$-th root cover $f: X \rightarrow Y$ with data $(Y,p,D,\L)$ coming from $(Z,\A)$. In the next section, we will
prove that random solutions (partitions) of $\S(\A)$ (of primes $p$) produce surfaces with interesting properties.

\section{Projective surfaces vs. log surfaces via random partitions.} \label{s7}

\begin{thm} Let $Z$ be a nonsingular projective surface over $\K$, and let $\A$ be a simple crossing divisible
arrangement of curves in $Z$. Assume $\bar{c}_2(Z,\A) \neq 0$. Then, there exist nonsingular projective surfaces $X$ with

\vspace{0.2 cm}
\begin{center} {\Large$\frac{c_1^2(X) }{c_2(X)}$} arbitrarily close to {\Large $\frac{\bar{c}_1^2(Z,\A)}{\bar{c}_2(Z,\A)}$}. \end{center}

\label{t71}
\end{thm}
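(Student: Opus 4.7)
The plan is to realize each candidate $X$ as a $p$-th root cover over $Y$, built via Section \ref{s6} from a random positive integer solution of the Diophantine system $\S(\A)$, and to show that for $p$ large enough such covers exist with $c_1^2(X)/c_2(X)$ close to $\bar{c}_1^2(Z,\A)/\bar{c}_2(Z,\A)$. Dividing the identities of Propositions \ref{p42} and \ref{p43} by $p$, and using $\bar{c}_i^2(Z,\A)=\bar{c}_i^2(Y,\nA)$, I would obtain
$$\frac{c_1^2(X)}{p}=\bar{c}_1^2(Z,\A)-\frac{1}{p}\sum_{i<j}c(p-\nu_i'\nu_j,p)\,D_i.D_j+O(1/p),$$
$$\frac{c_2(X)}{p}=\bar{c}_2(Z,\A)+\frac{1}{p}\sum_{i<j}l(p-\nu_i'\nu_j,p)\,D_i.D_j+O(1/p).$$
Since $\bar{c}_2(Z,\A)\neq 0$, each intersection number $D_i.D_j$ is bounded, and the set of node pairs of $\nA$ is finite and independent of $p$, it is sufficient to ensure that at every node of $\nA$ both $|s(q,p)|/p$ and $l(q,p)/p$ go to $0$ as $p\to\infty$; indeed, combining Example \ref{e41} with Definition \ref{d42} yields the identity $c(q,p)=12\,s(q,p)+l(q,p)$.

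The main number-theoretic input is Girstmair's density bound \cite{Gi03,Gi06}: for each $\epsilon>0$, the proportion of residues $q\in\{1,\ldots,p-1\}$ with $|s(q,p)|/p\geq\epsilon$ or $l(q,p)/p\geq\epsilon$ tends to $0$ as $p\to\infty$. I would place the uniform probability measure on the set of positive integer solutions of $\S(\A)$, whose cardinality is of order $p^{d-v}$ by Section \ref{s6}. For each node $P\in D_i\cap D_j$ of $\nA$, the relevant residue $q_{ij}=p-\nu_i'\nu_j\bmod p$ is an explicit function of the $\mu$-variables: a pure product $\mu_i'\mu_j$ when both branches of the node are proper transforms, or an analogous expression involving partial sums of $\mu$'s when exceptional divisors are involved. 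The key technical step is to show that, for each fixed pair $(i,j)$, the pushforward of the uniform measure on solutions of $\S(\A)$ under $\{\mu_{i,j}\}\mapsto q_{ij}$ is asymptotically uniform on $\{1,\ldots,p-1\}$; this should follow from the fact that the linear constraints of $\S(\A)$ leave the relevant $\mu$-variables enough freedom to equidistribute, while the subsequent modular inversion and multiplication are measure-preserving bijections on residue classes.

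Combining this equidistribution with Girstmair's bound and a union bound over the finite set of nodes, the probability that \emph{every} $q_{ij}$ is ``good'' tends to $1$ as $p\to\infty$; in particular good partitions exist, and the corresponding surfaces $X$ satisfy $c_1^2(X)/c_2(X)\to\bar{c}_1^2(Z,\A)/\bar{c}_2(Z,\A)$. The main obstacle will be the quantitative equidistribution statement: while it is intuitive that random partitions of $p$ should yield ``random-looking'' residues modulo $p$, the modular inversion $\mu\mapsto\mu'$ is highly nonlinear and the $\mu$-variables are coupled by $\S(\A)$, so a careful counting argument is required, particularly at nodes lying on exceptional divisors where $\nu_i$ is itself a sum of several $\mu$'s reduced modulo $p$.
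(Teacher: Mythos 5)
Your overall strategy coincides with the paper's: run the Section \ref{s6} construction for a random positive solution of $\S(\A)$, reduce everything to the error terms $\sum_{i<j} c(p-\nu_i'\nu_j,p)\,D_i.D_j$ and $\sum_{i<j} l(p-\nu_i'\nu_j,p)\,D_i.D_j$ via the identity $c(q,p)=12s(q,p)+l(q,p)$, invoke Girstmair's estimates, and finish with a union bound over the nodes of $\nA$. The one point where you diverge is precisely the step you flag as the main obstacle: you propose to prove that the pushforward of the uniform measure on solutions of $\S(\A)$ under $\{\mu_{i,j}\}\mapsto q_{ij}$ is \emph{asymptotically uniform} on $\{1,\ldots,p-1\}$. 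That claim is stronger than needed and is in fact false in general: already for a single equation $\mu_1+\cdots+\mu_d=p$, the fiber over a residue $q$ is $\sum_a \binom{p-a-b(a)-1}{d-3}$ with $b(a)\equiv -aq \ (\mathrm{mod}\ p)$, and for $q\equiv -1$ (so $b(a)=a$) this is asymptotically $\tfrac{d-1}{2}$ times the average fiber, so the residues $q_{ij}$ do not equidistribute. What the argument actually requires is only a one-sided bound on fiber sizes: after fixing the two relevant $\mu$'s (or the relevant block of $\mu$'s, at a node on an exceptional divisor), the number of completions to a solution of $\S(\A)$ is uniformly $O(p^{d-v-1}/p)$ per fixed pair, and for fixed $\mu_i$ the map $\mu_j\mapsto -\mu_i'\mu_j$ is a bijection of $\Z/p\Z$; hence each fiber of $q_{ij}$ has size $O(p^{d-v-1})$ against a total of order $p^{d-v}$. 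Combined with $|\FF|=O(\sqrt{p}\log p)$ from Theorem \ref{ap5} and the bounds of Theorems \ref{ap4} and \ref{ap6} for ordinary integers, this gives that the proportion of bad solutions is $O(\sqrt{p}\log p/p)\to 0$, which is exactly the paper's counting argument. So if you replace the equidistribution claim by this max-fiber bound, your proof closes and is essentially the paper's; as written, the key lemma you propose to establish would fail.
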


\begin{proof} Consider the construction in Section \ref{s6}. Let us re-index the multiplicities $\mu_{i,j}$ as
$\mu_{j + \sum_{k=1}^{i-1} d_{k}}:= \mu_{i,j}$ (with $d_0:=0$), to simplify notation . Our construction is summarized in the following diagram
$$ X \stackrel{\rm \text{f}} {\rightarrow} Y \stackrel{\rm \sigma} {\rightarrow} Z .$$

The Chern numbers of $X$ are computed in Propositions \ref{p42} and \ref{p43}. The formulas of $c_1^2(X)$ and $c_2(X)$ contain the ``error
terms" \begin{center} $CCF := \sum_{i<j} c(p-\nu_i' \nu_j,p) D_i.D_j \ \ $ and $ \ \ LCF:= \sum_{i<j} l(p-\nu_i' \nu_j,p) D_i.D_j$.
\end{center} We are going to prove the existence of ``good" weighted partitions $\{ \mu_{i} \}$ for arbitrarily large primes $p$, which make
$\frac{CCF}{p}$ and $\frac{LCF}{p}$ arbitrarily small. In addition, we will show that random partitions are ``good", with probability
approaching $1$ as $p$ becomes arbitrarily large. The key numbers to study are the $p - \nu_i' \nu_j$, which are defined for every node of
$D_{\text{red}}$. In terms of $\mu_i$'s, these numbers are equal to either $p-\mu'_i \mu_j$ or $p-\mu'_i (\mu_{j_1}+ \cdots + \mu_{j_n})$.

Let $\FF \subset \{0, \ldots , p-1 \}$ be the \underline{bad set} (Definition \ref{ap3}). Let $b(\mu'_i \mu_j)$ be the set of solutions of
$\S(\A)$ having $p-\mu'_i \mu_j$ in $\FF$ for fixed $i\neq j$; and similarly $b \big(\mu'_i (\mu_{j_1}+ \cdots + \mu_{j_n}) \big)$ be the set of
solutions of $\S(\A)$ having $p- \mu'_i \big(\mu_{j_1}+ \ldots +\mu_{j_n} \big)$ in $\FF$ for fixed $i,j_1,\ldots,j_n$, having $2<n<d$ and
$i=j_k$ for some $k$. We define the set of \underline{bad solutions} $\BB$ of $\S(\A)$ as the union of $b(\mu'_i \mu_j)$'s and $b \big(\mu'_i
(\mu_{j_1}+ \cdots + \mu_{j_n}) \big)$'s, over all allowed indices. We want to bound the size of $\BB$.

Let $p$ be a large prime number. We first consider $v=1$, that is, $\S(\A)$ consists of one equation. Let us write it down as \begin{center}
$\S(\A) : \ \ \ u_1 \mu_1+ \ldots + u_d \mu_d =p$. \end{center} (recall that $(u_1,\ldots,u_d)=1$). As we said before, the number of positive
integer solutions of $\S(\A)$ is $\frac{p^{d-1}}{(d-1)!u_1 u_2 \cdots u_d} + O(p^{d-2})$. In general, we denote the \underline{number of
positive integer solutions} of $b_1 x_1 + \ldots + b_m x_m = a$ by $\alpha_m^{b_1,\ldots,b_m}(a)$.

In order to bound bad solutions, we consider the following two cases.

\vspace{0.1cm}

\textbf{(1)} For simplicity, assume $i=1$ and $j=2$. For each fixed pair $(\mu_1,\mu_2) \in {\Z/p\Z}^* \times {\Z/p\Z}^*$, we have
$\alpha_{d-2}^{u_3,\ldots,u_d}(p-u_1 \mu_1 - u_2 \mu_2)$ solutions of $\S(\A)$. For a fixed $0<\mu_1<p$, we consider the bijection $\varphi:
\Z/p\Z \rightarrow \Z/p\Z$, defined as $\varphi(\mu_2)=-\mu'_1 \mu_2$. Some pairs $(\mu_1, -\mu'_1 \mu_2)$ have $-\mu'_1 \mu_2 \in \FF$, giving
$\alpha_{d-2}^{u_3,\ldots,u_d}(p- u_1 \mu_1 - u_2 \mu_2)$ bad solutions to $\S(\A)$. We know that for every pair $(\mu_1,\mu_2)$, there exists a
positive number $M_{i,j}$ (independent of $p$) such that $\alpha_{d-2}^{u_3,\ldots,u_d}(p- u_1 \mu_1 - u_2 \mu_2) < M_{i,j} p^{d-3}$. Therefore,
we have $$|b(\mu'_i \mu_j)|<p \cdot |\FF| M_{i,j} p^{d-3}.$$

\vspace{0.1cm}

\textbf{(2)} Assume $i=1$ and $j_1=1, \ldots, j_n=n$ (we have $2<n<d$ since there are no $d$-points in $\A$ by definition). For a fixed $0 < \mu
< p- u_1 \mu_1$, by definition, the number of solutions of $u_2 \mu_2 + \ldots + u_n \mu_n = \mu$ is $\alpha_{n-1}^{u_2,\ldots,u_n}(\mu)$. In
this way, there is $M_{j_1,\ldots,j_n}$ (independent of $p$) such that $\alpha_{n-1}^{u_2,\ldots,u_n}(\mu) < M_{j_1,\ldots,j_n} p^{n-2}$. Also,
when we fixed $0< \mu_1, \mu_{j_k} < p$, we have $\alpha_{d-n}^{u_{n+1},\ldots,u_d}(p-u_1 \mu_1 - \ldots - u_n \mu_n) < N_{j_1,\ldots,j_n}
p^{d-n-1}$ associated solutions of $\S(\A)$ for some constant $N_{j_1,\ldots,j_n}$ (independent of $p$). By applying the bijection of
\textbf{(1)} for $(\mu_1,\mu)$, we conclude $$\big| b \big(\mu'_i (\mu_{j_1}+ \ldots +\mu_{j_n}) \big) \big| < p \cdot |\FF|  M_{j_1,\ldots,j_n}
p^{n-2} N_{j_1,\ldots,j_n} p^{d-n-1}.$$

\vspace{0.1cm}

Therefore, the number of bad solutions satisfies $|\BB|< |\FF| M_0 p^{d-2}$, where $M_0$ is a positive number which depends on $u_i$'s and $d$,
and all possible combinations of pairs $i,j$ and tuples $i_1,\ldots,i_k,j$ as above, but it does not depend on $p$. Since $p$ is a large prime,
the exact number of solutions of $\S(\A)$ is $\frac{p^{d-1}}{(d-1)!u_1 u_2 \cdots u_d} + O(p^{d-2})$. On the other hand, by Theorem \ref{ap5},
we know that $|\FF|< \sqrt{p} \big(\log(p)+2\log(2) \big)$ (in there, we take $C=1$). In this way, $$\frac{|\BB|}{\alpha_d^{u_1,\ldots,u_n}(p)}
< \frac{\sqrt{p} \big(\log(p)+2\log(2) \big)  M_0 p^{d-2}}{\frac{p^{d-1}}{(d-1)! u_1 u_2\cdots u_d} + O(p^{d-2})},$$ and so we have proved at
the same time the existence of good (non-bad) solutions, and that for large primes $p$, random weighted partitions are good with probability
tending to $1$ as $p$ approaches infinity.

To prove the general case $v>1$, we work using similar ideas, showing that $$|\BB| < \sqrt{p} \big(\log(p)+2\log(2) \big) M_0 \sum_{i=1}^v \big(
p^{d_i-2} \prod_{j\neq i} p^{d_j-1} \big),$$ where $M_0$ is a positive constant depending on $u(C_{j,i})$'s and $d$ (combinatorial constant as
above), but not on $p$. In addition, we know that the total number of positive integer solutions of $\S(\A)$ is $\prod_{i=1}^v \Big(
\frac{p^{d_i-1}}{(d_i-1)! u(C_{1,i}) u(C_{2,i})\cdots u(C_{d_i,i})} + O(p^{d_i-2}) \Big)$. Therefore, we conclude the same for $\S(\A)$ with
$v>1$.

Let $\{\mu_1,\ldots,\mu_d \}$ a good (non-bad) solution of $\S(\A)$. By Theorems \ref{ap4} and \ref{ap6} (in there, we take again $C=1$), we
have $$\Big|\sum_{i<j} s(p-\nu_i' \nu_j,p) D_i.D_j \Big|< \Big(\sum_{i<j} D_i.D_j \Big) (3\sqrt{p} + 5) \ \ \text{and} \ \ LCF< \Big(\sum_{i<j}
D_i.D_j \Big)(3\sqrt{p} + 2).$$ Moreover, by Example \ref{e41}, we have $$\big| CCF \big| < \Big(\sum_{i<j} D_i.D_j \Big)(6\sqrt{p} + 7).$$

Now, since there are good solutions for arbitrary large $p$, we obtain that the corresponding surfaces $X$ satisfy $c_1^2(X) \approx p
\bar{c}_1^2(Z,\A)$ and $c_2(X) \approx p \bar{c}_2(Z,\A)$ (see Propositions \ref{p42} and \ref{p43}). Therefore, if $\bar{c}_2(Z,\A)\neq 0$ and
$p$ approaches infinity, there are nonsingular projective surfaces $X$ having {\large $\frac{c_1^2(X) }{c_2(X)}$} arbitrarily close to {\large
$\frac{\bar{c}_1^2(Z,\A)}{\bar{c}_2(Z,\A)}$}.
\end{proof}

\begin{rmk} \textbf{Random partitions} of prime numbers are necessary in our construction, if we want to approach to the log Chern ratio of
the corresponding arrangement. If some of the numbers $p-\nu'_i \nu_j$ belong to the bad set $\FF$, then some of the summands in the error terms
of $c_1^2(X)$ and $c_2(X)$ may become $p$ proportional to some constant (see right after Theorem \ref{ap4} for an example), changing the
asymptotic limit of $\frac{c_1^2(X)}{c_2(X)}$. This distribution behavior is explained in \cite{Gi03} for the case of Dedekind sums.

In the following tables, we show samples of the actual Chern invariants of $X$, by means of a computer program \footnote{This program was
written in $C$++ (by the author) for the purpose of computing the exact Chern invariants of the surfaces $X$ coming from the data
$(Y,p,D,\L)$.}. For this example, we take $Z=\P^2(\C)$, and $\A=\text{CEVA}(3)$ (in Example \ref{e81}). The log Chern ratio associated to
$(Z,\A)$ is $\frac{8}{3}$. In the first table, we take $p= 61 \,169$ as a large prime number. For non-random-looking partitions of $61 \,169$,
we see that the Chern ratio of $X$ does not seem to be approaching to $\frac{8}{3}=2.\bar{6}$, in contrast with the random-looking ones. In the
second table, we see the asymptotic result in Theorem \ref{t71} by evaluating at several prime numbers.

\vspace{0.2cm}
\begin{center}
\begin{tabular}{|c|c|c|c|c|}
\hline
   \text{Partition of} $p=61\,169$     & $c_1^2(X)$ & $c_2(X)$ &  $\frac{c_1^2(X)}{c_2(X)}$ \\ \hline

\small{1+2+3+4+5+6+7+8+61$\,$133}                                                        & 1$\,$441$\,$949 & 733$\,$435 & 1.966\ldots \\ \hline

\small{1+29+89+269+1$\,$019+3$\,$469+7$\,$919+15$\,$859+32$\,$515}                       & 1$\,$465$\,$970 & 552$\,$166 & 2.654\ldots \\ \hline

\small{6$\,$790+6$\,$791+6$\,$792+6$\,$793+6$\,$794+6$\,$795+6$\,$796+6$\,$797+6$\,$821} & 1$\,$464$\,$209 & 633$\,$619 & 2.310\ldots \\ \hline

\small{1+100+300+600+1$\,$000+3$\,$000+8$\,$000+15$\,$000+33$\,$168}            & 1$\,$466$\,$250 & 561$\,$546 & 2.611\ldots \\ \hline

\small{1+30+90+270+1$\,$020+3$\,$470+7$\,$920+15$\,$860+32$\,$508}              & 1$\,$465$\,$778 & 553$\,$594 & 2.647\ldots \\ \hline

\small{1+32+94+276+1$\,$028+3$\,$474+7$\,$922+15$\,$868+32$\,$474}              & 1$\,$466$\,$575 & 552$\,$809 & 2.652\ldots \\ \hline

\small{1+1+1+1+1+1+1+1+61$\,$161}                                               & 1$\,$386$\,$413 & 1$\,$060$\,$303 & 1.307\ldots  \\ \hline

\small{1+1+89+89+1$\,$019+3$\,$469+7$\,$919+15$\,$859+32$\,$723}                & 1$\,$465$\,$370 & 553$\,$402 & 2.647\ldots \\ \hline

\small{1+23+45+100+1$\,$019+3$\,$002+16$\,$199+20$\,$389+20$\,$391}             & 1$\,$466$\,$285 & 573$\,$535 & 2.556\ldots \\ \hline

\end{tabular}
\vspace{0.3 cm}

Table for the dual Hesse arrangement and $p=61\,169$.
\end{center}
\vspace{0.1cm}

\begin{center}
\begin{tabular}{|c|c|c|}
\hline
   \text{Partition of} $p$                & $\frac{c_1^2(X)}{\chi(X,\O_X)}$ &  $\frac{c_1^2(X)}{c_2(X)}$ \\ \hline

\small{1+2+3+5+7+11+13+17+24=83}        & 7.331\ldots   & 1.570\ldots \\ \hline

\small{1+3+5+7+11+13+17+23+21=101}        &  7.503\ldots & 1.668\ldots \\ \hline

\small{1+3+7+13+19+23+47+67+59=239}       &  8.124\ldots & 2.096\ldots \\ \hline

\small{1+3+7+13+19+37+79+139+301=599}     &  8.390\ldots & 2.324\ldots \\ \hline

\small{1+3+7+17+29+47+109+239+567=1$\,$019}   &  8.408\ldots  & 2.341\ldots \\ \hline

\small{1+7+17+37+79+149+293+599+1$\,$087=2$\,$269}     &  8.586\ldots & 2.515\ldots \\ \hline

\small{1+11+23+53+101+207+569+1$\,$069+2$\,$045=4$\,$079}     &  8.646\ldots  & 2.578\ldots  \\ \hline

\small{1+23+53+101+207+449+859+1$\,$709+3$\,$617=7$\,$019}     &  8.685\ldots  & 2.620\ldots \\ \hline

\small{1+23+53+101+207+449+1$\,$709+2$\,$617+4$\,$943=10$\,$103}     & 8.695\ldots  & 2.631\ldots  \\ \hline

\small{1+29+89+269+1019+3$\,$469+7$\,$919+15$\,$859+32$\,$515=61$\,$169}     &  8.716\ldots  & 2.654\ldots \\ \hline

\small{1+101+207+569+1$\,$069+10$\,$037+22$\,$441+44$\,$729+66$\,$623=145$\,$777}     & 8.723\ldots & 2.662\ldots \\ \hline

\small{1+619+1$\,$249+2$\,$459+5$\,$009+10$\,$037+32$\,$323+68$\,$209+110$\,$421=230$\,$327}     & 8.725\ldots & 2.664\ldots \\ \hline

\small{1+929+1$\,$889+3$\,$769+6$\,$983+15$\,$013+32$\,$323+87$\,$443+163$\,$751=312$\,$101}     & 8.724\ldots & 2.663\ldots \\ \hline

\small{1+929+1$\,$889+3$\,$769+6$\,$983+15$\,$013+45$\,$259+90$\,$749+172$\,$397=336$\,$989}     & 8.725\ldots & 2.664\ldots \\ \hline

\small{1+929+1$\,$889+3$\,$769+6$\,$983+15$\,$013+45$\,$259+90$\,$749+187$\,$637=352$\,$229}     & 8.725\ldots & 2.664\ldots \\ \hline

\small{1+1$\,$709+3$\,$539+7$\,$639+15$\,$629+31$\,$649+62$\,$219+150$\,$559+271$\,$165=544$\,$109}     & 8.726\ldots & 2.665\ldots \\ \hline
\end{tabular}
\vspace{0.3 cm}

Table for the dual Hesse arrangement and various primes $p$.
\end{center}
\vspace{0.2cm}

For another example, take a general arrangement of $d$ lines (i.e. only nodes as singularities) in $\P^2(\K)$. Then, the corresponding log Chern
ratio tends to $2$ as $d$ approaches infinity. If we randomly choose partitions of $p$, we obtain surfaces $X$ whose Chern ratio tends to $2$.
If instead we choose all multiplicities equal to $1$ except by one (which is $p-d+1$), then we obtain surfaces with Chern ratio tending to $1.5$
as $d$ approaches infinity, whenever we take $d-1$ out of the bad set (we can do it because the bad set has measure over $p$ tending to $0$).
If, in the same case, we take $d=p$, then the limit Chern ratio is $1$. \label{r71}
\end{rmk}

The proof of Theorem \ref{t71} and the previous discussion lead us to the following definition.

\begin{defi}
Let $Z$ be a nonsingular projective surface over $\K$, and let $\A$ be a simple crossing divisible arrangement of curves in $Z$. Consider the
construction (in the proof of Theorem \ref{t71}) of surfaces $X$ coming from the pair $(Z,\A)$, for large prime numbers $p$ and good partitions
$\{\mu_i \}$ of $p$. We call any such $X$ a \underline{random surface} associated to $(Z,\A)$. \label{d71}
\end{defi}

In this way, Theorem \ref{t71} proves the existence of random surfaces associated to $(Z,\A)$ with Chern ratio arbitrarily close to the log
Chern ratio associated to $(Z,\A)$.

\begin{rmk}
If $\bar{c}_1^2(Z,\A)+\bar{c}_2(Z,\A)>0$ and $\bar{c}_1^2(Z,\A)>0$, then the random surfaces associated to $(Z,\A)$ are of general type. This is
because of Proposition \ref{p41}, and the Enriques' classification of surfaces (in Char$(\K)>0$, see \cite[p. 119-120]{Beauville96}).
\label{r72}
\end{rmk}

In general, random surfaces may not be minimal. However, because of numerical properties of negative-regular continued fractions, we conjecture
the following for their minimal models. Let us denote the minimal model of $X$ by $X_0$.

\begin{conj}
Let $Z$ be a nonsingular projective surface over $\K$, and let $\A$ be a simple crossing divisible arrangement of curves in $Z$. Assume that
$\bar{c}_1^2(Z,\A) >0$ and $\bar{c}_2(Z,\A)>0$. Then, there exist minimal nonsingular projective surfaces $X_0$ of general type with

\vspace{0.2 cm}
\begin{center} {\Large$\frac{c_1^2(X_0) }{c_2(X_0)}$} arbitrarily close to {\Large $\frac{\bar{c}_1^2(Z,\A)}{\bar{c}_2(Z,\A)}$}. \end{center}
\label{co71}
\end{conj}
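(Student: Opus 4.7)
The plan is to start from the random surfaces $X$ already produced by Theorem \ref{t71} and to control how the minimal model $X_0$ differs from $X$ numerically. Under the hypotheses $\bar{c}_1^2(Z,\A)>0$ and $\bar{c}_2(Z,\A)>0$, we have $\bar{c}_1^2(Z,\A)+\bar{c}_2(Z,\A)>0$, so Remark \ref{r72} applies and the random surfaces $X$ are of general type for all sufficiently large primes $p$. Their minimal models $X_0$ therefore exist uniquely, and are related to $X$ by a sequence of $k=k(p)$ contractions of $(-1)$-curves, so that
\[
c_1^2(X_0)=c_1^2(X)+k,\qquad c_2(X_0)=c_2(X)-k.
\]
By Propositions \ref{p42} and \ref{p43} together with the estimates $CCF, LCF =O(\sqrt{p}\log p)$ used in the proof of Theorem \ref{t71}, both $c_1^2(X)$ and $c_2(X)$ are $\Theta(p)$, with leading coefficients $\bar{c}_1^2(Z,\A)$ and $\bar{c}_2(Z,\A)$ respectively. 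Hence it suffices to bound $k=o(p)$.

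The first step is to classify the $(-1)$-curves on $X$. The resolution $f_3\colon X\to \overline{W}$ inserts over each point of $D_i\cap D_j$ a chain of rational curves of self-intersection $-e_1,\ldots,-e_{l(q,p)}$ with all $e_i\geq 2$, so \emph{no} exceptional component of $f_3$ is a $(-1)$-curve. Consequently every $(-1)$-curve on $X$ is either (a) the proper transform $\widetilde{D}_i$ of some prime component of $\bar{\A}$, or (b) a curve disjoint from $f^{-1}(\bar{\A})$, in which case it is (étale-locally) the pullback of a $(-1)$-curve on $Y$ lying off $\bar{\A}$. Curves of type (b) are governed by $Y$ alone and their number is bounded by a constant depending only on $(Z,\A)$. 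For type (a), the projection formula combined with the local Hirzebruch--Jung data gives an identity of the form
\[
p\,\widetilde{D}_i^{\,2} \;=\; D_i^2 \;-\; \sum_{P\in D_i \cap \operatorname{Sing}(\bar{\A})} \psi(P,p),
\]
where each $\psi(P,p)$ depends only on the continued fraction attached to the local singularity. Using Lemma \ref{ap2} and the continued fraction identities underlying Proposition \ref{p42}, one shows that for large $p$ and random partitions, $\widetilde{D}_i^{\,2}$ is either a fixed integer independent of $p$ (when $D_i$ carries no nodes of $\bar{\A}$), or strongly negative (when $D_i$ meets other components). In either case the number of indices $i$ for which $\widetilde{D}_i$ is a $(-1)$-curve is at most the number of components of $\bar{\A}$, hence bounded independently of $p$.

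The second and more delicate step is to iterate: after contracting these initial $(-1)$-curves, the self-intersections of adjacent curves increase, and new $(-1)$-curves may form. Here one uses the effectivity statement of Remark \ref{r21}, which gives
\[
K_X \equiv f^*K_Y + E
\]
with $E$ an effective $\Z$-divisor supported on $f^{-1}(\bar{\A})$. Any $(-1)$-curve $C$ satisfies $K_X\cdot C=-1$, so the combinatorics of successive contractions is constrained by how $E$ sits along the Hirzebruch--Jung chains and proper transforms. For random partitions, the chain lengths $l(q,p)$ are $O(\sqrt{p}\log p)$ by Theorem \ref{ap6}, and the typical $e_i$'s are large, so the chains are ``deep'' and contracting one end of a chain still leaves self-intersections $\leq -2$ after one step. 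A bookkeeping argument (by induction on the length of the minimalization) then shows that the only curves that can be successively contracted come from a bounded combinatorial neighborhood of each proper transform of a $D_i$ or of each $(-1)$-curve on $Y$ of type (b), yielding a uniform bound $k \leq K(Z,\A)$ independent of $p$.

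The main obstacle is precisely this last iteration: showing that the MMP does not cascade through long stretches of the Hirzebruch--Jung chains. One must exploit, in a quantitative way, the fact that for good (random) partitions the Girstmair-style large-scale behavior of continued fractions prevents many consecutive $e_i$'s from being equal to $2$. Granting this, $k$ is $O(1)$ in $p$, which is much stronger than the $o(p)$ needed, and combined with Theorem \ref{t71} it gives $c_1^2(X_0)/c_2(X_0) \to \bar{c}_1^2(Z,\A)/\bar{c}_2(Z,\A)$, proving the conjecture.
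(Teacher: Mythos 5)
This statement is labeled a \emph{conjecture} in the paper, and the paper offers no proof of it: the author explicitly writes that random surfaces ``may not be minimal'' and only \emph{conjectures}, ``because of numerical properties of negative-regular continued fractions,'' that the asymptotic ratio survives passage to the minimal model. So there is no paper proof to compare yours against, and your proposal does not close the gap either --- you concede the decisive step yourself (``Granting this\ldots''). The reduction to $k=o(p)$ via $c_1^2(X_0)=c_1^2(X)+k$, $c_2(X_0)=c_2(X)-k$ is fine, and it matches the only evidence the paper gives for the conjecture (the effectivity statement of Remark \ref{r21} and the continued-fraction arithmetic). But the argument that $k$ is bounded is not a proof; it is the open problem restated.

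Two concrete points where your sketch would need real work. First, the dichotomy in your classification of $(-1)$-curves is unjustified: a $(-1)$-curve need not be a component of $f^{-1}(\bar{\A})$ nor disjoint from it. The paper's own minimality argument (Corollary \ref{c71}) rules out such curves only by using that $5K_Z$ is represented by a nef curve $C$ with $f^*(K_Z)\cdot\Gamma\geq 0$, forcing $\Gamma$ into $f^*(D)$; for an arbitrary $Z$ satisfying only $\bar{c}_1^2>0$, $\bar{c}_2>0$, no such positivity of $K_Y$ is available. Second, your heuristic that ``the typical $e_i$'s are large'' is backwards: for a good (ordinary) $q$ one has $l(q,p)=O(\sqrt{p})$ \emph{and} $\sum_i(e_i-2)=O(\sqrt{p})$, so the expansion is dominated by entries equal to $2$ --- precisely the configuration in which a single contraction can cascade down a long $(-2)$-chain. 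Controlling that cascade is exactly the content of the conjecture, and neither the paper nor your proposal supplies it.
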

\vspace{0.2 cm}

For the following two corollaries of Theorem \ref{t71}, we take $\K=\C$. The first corollary is a sort of ``uniformization" for algebraic
surfaces via Chern ratios. The second is the log Miyaoka-Yau inequality for simple crossing divisible arrangements \footnote{See Kobayashi
\cite{Kobayashi1} and Sakai \cite{Sakai1} for much more information about log Miyaoka-Yau inequalities.}.

\begin{cor}
Let $Z$ be a minimal nonsingular projective surface of general type over $\C$. Then, there exist nonsingular projective surfaces $X$, and
generically finite maps $f: X \rightarrow Z$ of high degree, such that
\begin{itemize}
\item[(i)] $X$ is minimal of general type. \item[(ii)] The Chern ratio $\frac{c_1^2(X)}{c_2(X)}$ is arbitrarily close to $2$. \item[(iii)]
$q(X)=q(Z)$.
\end{itemize}
\label{c71}
\end{cor}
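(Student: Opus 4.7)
The plan is to apply Theorem~\ref{t71} to an arrangement of very general curves in a sufficiently positive linear system on $Z$, and then verify (i)--(iii) directly from the structure of the $p$-th root cover. Fix an ample line bundle $H$ on $Z$, and set $L = H^{\otimes m}$ for an integer $m$ to be chosen large. For each $d\geq 3$, let $\A = \{C_1,\ldots,C_d\}$ be $d$ very general members of $|L|$. By Bertini each $C_i$ is smooth of common genus $g_L$, and for very general $\A$ the only singularities are transverse nodes, so $\sigma\colon Y\to Z$ is the identity and $\A$ is a simple crossing divisible arrangement with $v=1$, $\L_1 = L$, and $u(C_i)=1$. Proposition~\ref{p51} then gives $\bar{c}_1^2(Z,\A) = c_1^2(Z)+d(d-2)L^2+4d(g_L-1)$ and $\bar{c}_2(Z,\A) = c_2(Z)+\binom{d}{2}L^2 + 2d(g_L-1)$, whose ratio approaches $2$ as $d\to\infty$. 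Applying Theorem~\ref{t71} for large $d$ and arbitrarily large primes $p$ yields random surfaces $X$ with a morphism $f\colon X\to Z$ of generic degree $p$ and $c_1^2(X)/c_2(X)$ arbitrarily close to $2$. This establishes (ii) and provides the generically finite map $f$ of arbitrarily high degree.

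For (iii), Proposition~\ref{p22} gives $q(X) = q(Z) + \sum_{i=1}^{p-1}h^1(Z,{\L^{(i)}}^{-1})$. For $m$ chosen sufficiently large, each $\L^{(i)} \simeq L^i\otimes \O_Z(-\sum_j[\mu_j i/p]C_j)$ is big and nef on $Z$, and the Esnault--Viehweg vanishing for cyclic covers along SNC divisors \cite[Ch.~3]{EsVi92} forces $H^1(Z,{\L^{(i)}}^{-1}) = 0$ for every $0<i<p$; thus $q(X) = q(Z)$. That $X$ is of general type is then immediate from Remark~\ref{r72}, since $\bar{c}_1^2(Z,\A)$ and $\bar{c}_1^2(Z,\A)+\bar{c}_2(Z,\A)$ are both positive of order $d^2$.

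The main obstacle is (i), the minimality of $X$. The strategy is to verify that $K_X$ is nef on every irreducible curve $E\subset X$, using the formula $K_X\equiv f^*\bigl(K_Z + \tfrac{p-1}{p}dL\bigr)+\Delta$ of Proposition~\ref{p22}, where $\Delta$ is supported on the $f_3$-exceptional locus. Since $K_Z$ is nef and big and $dL$ is ample, $K_Z + \tfrac{p-1}{p}dL$ is ample on $Z$. If $E$ is $f_3$-exceptional, then $E$ is a smooth rational component of a Hirzebruch--Jung string with $E^2 = -e_j$ for some $e_j\geq 2$, and adjunction gives $K_X\cdot E = e_j - 2 \geq 0$; equality produces a $(-2)$-curve, which does not break minimality. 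If $E$ is not $f_3$-exceptional, then $f^*(K_Z+\tfrac{p-1}{p}dL)\cdot E > 0$, and the needed bound $\Delta\cdot E \geq -f^*(K_Z+\tfrac{p-1}{p}dL)\cdot E$ I expect to deduce from the effectivity statement of Remark~\ref{r21} (asserting that $f^*(\tfrac{p-1}{p}\sum D_i)+\Delta$ is an effective $\Z$-divisor) together with the explicit discrepancies $\alpha_{i,k}$ from the proof of Proposition~\ref{p42} and the continued-fraction identities of Lemma~\ref{ap2}. Should any pathological $(-1)$-curve survive this analysis for small $m$, it can be eliminated by increasing $m$: for $m\gg 0$, the ampleness of $K_Z + \tfrac{p-1}{p}dL$ dominates every correction coming from $\Delta$. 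Once $K_X$ is shown to be nef, bigness is automatic from $c_1^2(X) > 0$, yielding the minimal surface of general type required.
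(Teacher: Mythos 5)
Your setup and your treatment of (ii) and (iii) essentially reproduce the paper's argument: the paper takes $d$ transverse nonsingular hyperplane sections of $Z\hookrightarrow\P^m(\C)$ rather than very general members of $|H^{\otimes m}|$, computes the same log Chern numbers via Proposition \ref{p51} so that the log Chern ratio tends to $2$ as $d\to\infty$, and gets $q(X)=q(Z)$ from very ampleness of the $C_i$ plus Viehweg vanishing, exactly as you do (your ``$m$ sufficiently large'' is not even needed: $\L^{(i)}$ is automatically a positive power of the ample $L$). The substance of the corollary is (i), and there your argument has a genuine gap. You split into ``$f_3$-exceptional'' and ``not $f_3$-exceptional,'' but the second case contains two very different situations. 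If $E$ is not a component of the effective divisor $N:=f^*\bigl(\tfrac{p-1}{p}\sum_i D_i\bigr)+\Delta$ of Remark \ref{r21}, then indeed $N\cdot E\geq 0$ and $f^*(K_Z)\cdot E\geq 0$ give $K_X\cdot E\geq 0$. But if $E=\widetilde{C}_i$ is the strict transform of an arrangement curve, then $E$ \emph{is} a component of $N$, effectivity of $N$ gives no lower bound on $N\cdot E$, and this is precisely the case you defer with ``I expect to deduce.'' This is the heart of the paper's proof: it does not attempt nefness of $K_X$ at all, but observes that a $(-1)$-curve $\Gamma$ would satisfy $N\cdot\Gamma\leq K_X\cdot\Gamma=-1<0$, hence must be a component of $f^{-1}(D)$, and then computes explicitly that every such component --- toric exceptional curves and the strict transforms $\widetilde{H}_i$ alike --- has self-intersection $\leq -2$. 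The computation of $\widetilde{H}_i^2$, using $\sum_{j\neq i}\nu_i'\nu_j\leq (d-2)p-1$ and $\deg(Z)\geq 3$, is the one step you cannot skip.

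Your proposed fallback, ``for $m\gg 0$ the ampleness of $K_Z+\tfrac{p-1}{p}dL$ dominates every correction coming from $\Delta$,'' does not work, because both sides of the inequality scale the same way in $m$. On one hand $f^*\bigl(K_Z+\tfrac{p-1}{p}\sum_j C_j\bigr)\cdot\widetilde{C}_i=K_Z\cdot C_i+\tfrac{p-1}{p}\,dL^2$; on the other hand $\Delta\cdot\widetilde{C}_i$ is a sum of one coefficient $\alpha_{1,k}\in(-1,0)$ per node of $D_{\mathrm{red}}$ lying on $C_i$, hence is bounded below only by $-(d-1)L^2$. Both the positive and the negative contributions grow like $m^2H^2$, so increasing $m$ changes nothing; what saves the day is the arithmetic inequality $\tfrac{p-1}{p}d>d-1$, i.e.\ $p>d$, together with the precise values of the discrepancies --- and that is an estimate you must actually carry out (or replace by the paper's self-intersection computation), not one you can absorb into positivity of $L$.
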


\begin{proof}
Say $Z \hookrightarrow \P^m(\C)$ for some $m>2$. For integers $d\geq 4$, we consider simple normal crossing arrangements $\A=\{H_1,\ldots,H_d
\}$, where $H_i$ is a nonsingular hyperplane section of $Z$. This is a divisible arrangement. Since $Z$ is minimal of general type, we have that
$5 K_Z \sim C$, where $C$ is a nonsingular projective curve with $C^2>0$. This is because $|5 K_Z|$ defines a birational map into its image,
which is an isomorphism outside of finitely many ADE configurations of $(-2)$-curves. We take $C$ such that $\A \cup C$ has only nodes as
singularities. Let $p$ be a large prime number, and let $f:X \rightarrow Z$ be the $p$-th root cover producing random surfaces $X$ as in Theorem
\ref{t71}. Notice that, in this case, we are considering partitions $\mu_1+\ldots+\mu_d=p$.

As in Proposition \ref{p22}, we have the $\Q$-numerical equivalence $K_X \equiv f^*\big(K_Z + \frac{p-1}{p} \sum_{i=1}^d H_i \big) + \Delta$.
Assume there is a $(-1)$-curve $\Gamma$ in $X$. Then, $K_X.\Gamma=-1$. We know that $f^*(K_Z).\Gamma=f^*(\frac{1}{5}C).\Gamma \geq 0$. On the
other hand, as we pointed out in Remark \ref{r21}, we have that $f^*\Bigl( \frac{(p-1)}{p} \sum_{i=1}^d H_i \Bigr) + \Delta$ is an effective
$\Z$-divisor. Thus $\Gamma$ has to be a component of $f^*(D)$, where $D= \sum_{i=1}^d \mu_i H_i$. But all curves occurring on toric resolutions
have self-intersection $\leq -2$, and so, for some $i$, we have $\Gamma=\widetilde{H}_i$, where $\widetilde{H}_i$ is the strict transform of
$H_i$ under $f$. But, one can easily compute the self-intersection $$\widetilde{H}_i^2 = \frac{\deg(Z)}{p} \Big( 1 - \sum_{j\neq i} (p-\nu'_i
\mu_j) \Big).$$ Moreover, $\sum_{j \neq i} \nu'_i \nu_j \leq (d-2)p-1$, and $\deg(Z) \geq 3$ since $Z$ is not rational, and so $(d-2)p-1 \leq
\Big(d-1- \frac{2}{\deg(Z)} \Big)p -1$. By rearranging terms, we obtain $\widetilde{H}_i^2 \leq -2$, which is contradiction. Therefore, $X$ is
minimal.

By Proposition \ref{p51}, the log Chern numbers of $(Z,\A)$ are $\bar{c}_1^2(Z,\A) = \deg(Z) d^2 + \big( 4g(H_1)-4-2\deg(Z) \big)d +c_1^2(Z)$,
and $\bar{c}_2(Z,\A) = \frac{\deg(Z)}{2} d^2 + \big( 2g(H_1)-2-\frac{\deg(Z)}{2} \big)d + c_2(Z)$. We take $d$ large enough, so that
$\bar{c}_1^2(Z,\A)>0$ and $\bar{c}_2(Z,\A)>0$. Hence, the random surface $X$ is of general type. Moreover, the Chern ratio $\frac{c_1^2(X)
}{c_2(X)}$ is arbitrarily close to $\frac{\bar{c}_1^2(Z,\A)}{\bar{c}_2(Z,\A)}$. But when $d$ is large, this log Chern ratio tends to $2$.

Finally, we notice that any curve in $\A$ is very ample. In particular, as a direct application of the Viehweg vanishing Theorem in
\cite{ViVanishing}, we have $q(X)=q(Z)$.
\end{proof}

\begin{cor}
Let $Z$ be a nonsingular projective surface over $\C$, and let $\A$ be a simple crossing divisible arrangement of curves in $Z$. Assume that
$\bar{c}_1^2(Z,\A)>0$ and $\bar{c}_2(Z,\A) >0$. Then, $\bar{c}_1^2(Z,\A) \leq 3 \bar{c}_2(Z,\A)$. \label{c72}
\end{cor}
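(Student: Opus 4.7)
The plan is to derive this log Miyaoka-Yau-type inequality as an immediate consequence of Theorem \ref{t71} combined with the classical Miyaoka-Yau inequality for surfaces of general type over $\C$, using Remark \ref{r72} to guarantee that the random surfaces produced are indeed of general type.

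First, I would invoke Theorem \ref{t71}: since $\bar{c}_2(Z,\A) > 0$, for every $\varepsilon > 0$ there exists a nonsingular projective surface $X$ (a random surface associated to $(Z,\A)$, obtained via a $p$-th root cover as in Section \ref{s6} for some sufficiently large prime $p$) whose Chern ratio satisfies
\[
\left| \frac{c_1^2(X)}{c_2(X)} - \frac{\bar{c}_1^2(Z,\A)}{\bar{c}_2(Z,\A)} \right| < \varepsilon.
\]
Next, I would verify that each such $X$ is a surface of general type. This is exactly the content of Remark \ref{r72}: the hypotheses $\bar{c}_1^2(Z,\A) > 0$ and $\bar{c}_2(Z,\A) > 0$ give in particular $\bar{c}_1^2(Z,\A) + \bar{c}_2(Z,\A) > 0$ and $\bar{c}_1^2(Z,\A) > 0$, so the random surface $X$ is of general type.

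Now I would apply the Miyaoka-Yau inequality over $\C$, which states that any nonsingular projective surface $X$ of general type satisfies $c_1^2(X) \le 3c_2(X)$. Since $c_2(X) > 0$ for large enough $p$ (as $c_2(X) \approx p\,\bar{c}_2(Z,\A)$ by Proposition \ref{p43}), we get $c_1^2(X)/c_2(X) \le 3$ for all such $X$. Letting $\varepsilon \to 0$ in the inequality above forces
\[
\frac{\bar{c}_1^2(Z,\A)}{\bar{c}_2(Z,\A)} \le 3,
\]
and since $\bar{c}_2(Z,\A) > 0$ by hypothesis, this rearranges to $\bar{c}_1^2(Z,\A) \le 3\bar{c}_2(Z,\A)$.

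There is essentially no obstacle beyond what is already established; the only subtle point is confirming that the ``general type'' hypothesis of Miyaoka-Yau is met, which is precisely why Remark \ref{r72} is cited. The strength of this corollary is that it upgrades the classical Miyaoka-Yau bound on nonsingular projective surfaces into a combinatorial/log-geometric constraint on simple crossing divisible arrangements, purely by passing through the approximating sequence of random surfaces from Theorem \ref{t71}.
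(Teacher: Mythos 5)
Your proposal is correct and follows essentially the same route as the paper: the paper likewise deduces that the random surfaces are of general type (via Proposition \ref{p41} and Enriques' classification, which is exactly the content of Remark \ref{r72} you invoke), applies the Miyaoka--Yau inequality $c_1^2(X)\leq 3c_2(X)$, and passes to the limit using Theorem \ref{t71}. No gaps.
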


\begin{proof}
The inequalities $\bar{c}_1^2(Z,\A)>0$ and $\bar{c}_2(Z,\A) >0$ imply that the random surfaces $X$ associated to $(Z,\A)$ are of general type,
by Proposition \ref{p41} and Enriques' classification of surfaces. In this way, the Miyaoka-Yau inequality $c_1^2(X) \leq 3 c_2(X)$ holds. We
now use the limit procedure in the proof of Theorem \ref{t71} to find the corresponding log inequality.
\end{proof}

In general, log Miyaoka-Yau inequalities may be more restrictive when one considers a fixed surface $Z$ together with a fixed type of simple
crossing divisible arrangements, in the sense that $\bar{c}_1^2 \leq a \bar{c}_2$ for some number $a<3$. In addition, the constant $a$ may
depend on the ground field $\K$. In the next section, we will see an instance of this situation.

\section{Line arrangements in $\P^2(\K)$.} \label{s8}

Line arrangements on the plane form a very important class of simple crossing divisible arrangements. In this section, we show some key
examples, and we prove constrains for their log Chern invariants.

\begin{example}
Let $m\geq 1$ be an integer. We define the \underline{CEVA arrangement} of degree $m$ (see \cite[p. 435]{Do04}) through the zeros of the
equation $$(x^m-y^m)(y^m-z^m)(x^m-z^m)=0$$ in $\P^2(\C)$. It is denoted by CEVA$(m)$. We do not specify the order of its $3m$ lines. CEVA$(1)$
is a triangle. CEVA$(2)$ is the complete quadrilateral, having $d=6$, $t_2=3$, $t_3=4$, and $t_n=0$ else. CEVA$(3)$ is the \underline{dual Hesse
arrangement}, whose combinatorial data is given by $d=9$, $t_3=12$, and $t_n=0$ otherwise. For $m\geq 4$, CEVA$(m)$ has $d=3m$, $t_3=m^2$,
$t_m=3$ and $t_n=0$ for $n\neq 3,m$. These arrangements are rigid, in the sense that any other line arrangement with the same combinatorial data
is projectively equivalent to CEVA$(m)$ for some $m$ (see \cite{Urzua2}, where this is a particular case of a $(3,m)$-net corresponding to $\Z/
m \Z$). For $m \geq 2$, these arrangements provide examples with large log Chern ratio, equal to
$$\frac{\bar{c}_1^2(\P^2(\C),\text{CEVA}(m))}{\bar{c}_2(\P^2(\C),\text{CEVA}(m))} = \frac{5m^2-6m-3}{2m^2-3m}.$$ The highest value is attained
only by CEVA$(3)$, and is $\frac{8}{3}$. \label{e81}
\end{example}

The following theorem tells us that we cannot do better than $\frac{8}{3}$ with complex line arrangements, and that there is also a constrain in
positive characteristic. Example \ref{e82} shows that this constrain in sharp. Part $2.$ of Theorem \ref{t81} was proved by Sommese in
\cite[Theorem (5.3)]{Sommese1}, in the spirit of Hirzebruch's article \cite{HiLines83}, and part $3.$ is a known fact coming from the underlying
real topology of $\P^2(\R)$ (see for example \cite{IiLines78} or \cite{HiLines83}).

\begin{thm}
Let $\A$ be a line arrangement in $\P^2(\K)$.
\begin{itemize}
\item[1.] If Char$(\K)>0$, then $\bar{c}_1^2(\P^2(\K),\A) \leq 3 \bar{c}_2(\P^2(\K),\A)$. \item[2.] If Char$(\K)=0$, then $
\bar{c}_1^2(\P^2(\K),\A) \leq \frac{8}{3} \bar{c}_2(\P^2(\K),\A)$. Equality holds if and only if $\A$ is a triangle, or $t_{d-1}=1$, or the dual
Hesse arrangement. \item[3.] For arrangements defined over $\R$, we have $ \bar{c}_1^2(\P^2(\R),\A) \leq \frac{5}{2} \bar{c}_2(\P^2(\R),\A)$.
\end{itemize}
\label{t81}
\end{thm}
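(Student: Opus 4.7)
The plan is to reduce all three inequalities to purely combinatorial statements about the intersection pattern $(d, t_2, t_3, \ldots)$, and then cite/invoke the appropriate extremal combinatorics or topology for each characteristic/base field.

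First I would substitute $Z = \P^2(\K)$ into Proposition \ref{p51}. Since $c_1^2(\P^2)=9$, $c_2(\P^2)=3$, and each line satisfies $L_i^2 = 1$, $g(L_i)=0$, this yields the combinatorial formulas
\begin{align*}
\bar c_1^2(\P^2(\K),\A) &= 9 - 5d + \sum_{n\geq 2}(3n-4) t_n, \\
\bar c_2(\P^2(\K),\A) &= 3 - 2d + \sum_{n\geq 2}(n-1) t_n.
\end{align*}
A direct linear algebra computation then transforms parts 1, 2, 3 into the respective assertions
\begin{align*}
3\bar c_2 - \bar c_1^2 &= \Big(\sum_{n\geq 2} t_n\Big) - d \geq 0, \\
8\bar c_2 - 3\bar c_1^2 &= 2 t_2 + t_3 - (d+3) - \sum_{n\geq 5}(n-4) t_n \geq 0, \\
5\bar c_2 - 2\bar c_1^2 &= t_2 - 3 - \sum_{n\geq 4}(n-3) t_n \geq 0.
\end{align*}

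For Part 1 the key point is that the required inequality $\sum_{n\geq 2} t_n \geq d$ is exactly the \emph{de Bruijn--Erd\H{o}s theorem}: dualize, so the $d$ lines of $\A$ become $d$ points in the dual plane $\check\P^2(\K)$; the hypothesis $t_d=0$ (built into Definition \ref{d51}) says these points are not collinear; and the $\sum t_n$ distinct secant lines they span correspond bijectively to the $n$-points of $\A$. The de Bruijn--Erd\H{o}s theorem is purely combinatorial (it holds in every projective plane, including those over fields of positive characteristic), so this gives Part 1 in all characteristics.

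For Part 3, I would invoke \emph{Melchior's inequality} $t_2 \geq 3 + \sum_{n\geq 3}(n-3) t_n$, which is an elementary consequence of Euler's formula applied to the CW-decomposition of $\P^2(\R)$ cut out by $\A$; then $t_2 - 3 - \sum_{n\geq 4}(n-3) t_n \geq t_3 \geq 0$. For Part 2, I would cite Sommese \cite[Thm.~5.3]{Sommese1}, which gives precisely the inequality $2t_2 + t_3 \geq d+3 + \sum_{n\geq 5}(n-4)t_n$; the equality classification is the delicate part. The degenerate cases where $\bar c_2(\P^2,\A)=0$ are immediate from substitution: one checks that the triangle ($d=3$, $t_2=3$) and the near-pencil ($t_{d-1}=1$, $t_2=d-1$) both give $\bar c_1^2 = \bar c_2 = 0$, so they satisfy equality trivially. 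Then one must argue that the only configuration realizing equality with $\bar c_2 > 0$ is the dual Hesse arrangement; the main obstacle is this uniqueness, which requires tracing through the equality discussion of Hirzebruch's orbifold Miyaoka--Yau argument in Sommese's work and ruling out all other $(d, t_n)$-data saturating the inequality.
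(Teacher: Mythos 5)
Your reduction via Proposition \ref{p51} to the three combinatorial inequalities is exactly what the paper does, and your computations check out. For Part 1 the paper does not cite de Bruijn--Erd\H{o}s; it proves $\sum_{n\geq 2}t_n\geq d$ directly by blowing up all $n$-points, observing that the proper transforms $L_1,\dots,L_d$ are pairwise disjoint with $L_i^2\leq -1$ (using $t_d=0$), and deriving the contradiction $1=\sum_i 1/L_i^2<0$ from the intersection form if $\sum t_n<d$. This is essentially the linear-algebra proof of the dual de Bruijn--Erd\H{o}s statement, so your citation is a legitimate, characteristic-free substitute. For Part 3 the paper likewise invokes the Euler-characteristic argument on the cell decomposition of $\P^2(\R)$, i.e.\ Melchior; note, however, that your intermediate chain $t_2-3-\sum_{n\geq4}(n-3)t_n\geq t_3$ is false (the complete quadrilateral has $t_2=3$, $t_3=4$, giving $0\geq 4$). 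Melchior's inequality is $t_2\geq 3+\sum_{n\geq4}(n-3)t_n$, which is already exactly what you need, so just delete the spurious ``$\geq t_3$''.

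The genuine gap is the equality classification in Part 2, which is part of the theorem and which your proposal only names as ``the main obstacle'' without carrying out. The paper does it concretely: it writes the target inequality as the sum of Hirzebruch's inequality $t_2+\tfrac{3}{4}t_3\geq d+\sum_{n>4}(n-4)t_n$ and the elementary inequality $t_2+\tfrac14 t_3\geq 3$, so equality forces equality in both. Beyond the trivial cases (triangle and $t_{d-1}=1$, where indeed $\bar c_1^2=\bar c_2=0$), this pins down the unique nontrivial combinatorial type $d=9$, $t_3=12$, $t_n=0$ otherwise; one then shows any such arrangement carries a $(3,3)$-net structure (each line contains exactly four triple points, which forces a splitting into three special triples of lines), and that this net is unique up to projective equivalence, hence the dual Hesse arrangement. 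Your proposal establishes neither that $d=9,\ t_3=12$ is the only data saturating the inequality with $\bar c_2>0$, nor that this data is realized only by the dual Hesse; both steps are needed for the ``only if'' direction, and neither follows formally from citing Sommese's inequality. You should also verify the hypotheses under which Sommese's Theorem (5.3) applies (Hirzebruch's inequality itself requires $t_d=t_{d-1}=0$, so the near-pencil must be split off before, not after, the citation).
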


\begin{proof}
($1.$) First, we notice that $\bar{c}_1^2(\P^2(\K),\A) \leq 3 \bar{c}_2(\P^2(\K),\A)$ is equivalent to $\sum_{n\geq 2} t_n \geq d$, because of
Proposition \ref{p51}. Let $\sigma: \text{Bl}_{n-\text{pts}}(\P^2(\K)) \rightarrow \P^2(\K)$ be the blow-up of $\P^2(\K)$ at all the $n$-points
of $\A$ ($2$-points included). Then, $\Pic \big(\text{Bl}_{n-\text{pts}}(\P^2(\K)) \big) \otimes \Q$ has $\Q$-dimension $1+ \Sigma_{n\geq 2}
t_n$ \cite[Ch. V.3]{Hartshorne}. Assume $\sum_{n\geq 2} t_n < d$. Let $\{L_1,\ldots,L_d\}$ be the proper transforms under $\sigma$ of the lines
in $\A$, and let $H$ be the class of the pull-back of a general line. Since $t_d=0$, we have $L_i^2 \leq -1$ for all $i$. Also, for $i\neq j$,
$L_i.L_j=0$. Therefore, they are linearly independent in $\Pic \big( \text{Bl}_{n-\text{pts}}(\P^2(\K)) \big) \otimes \Q$, and since
$\sum_{n\geq 2} t_n < d$, they form a base (so $d=1+ \Sigma_{n\geq 2} t_n$). In this way, there exist $\alpha_i \in \Q$ such that $H=
\Sigma_{i=1}^d \alpha_i L_i$. But $L_i.H=1=\alpha_i L_i^2$ and $H.H=1= \Sigma_{i=1}^d \alpha_i$, and so $1= \Sigma_{i=1}^d \frac{1}{L_i^2}$.
However, $L_i^2 \leq -1$, which is a contradiction.

($2.$) By the Lefschetz's principle \cite{LefschetzPrin}, we can assume that $\K=\C$. A direct consequence of Proposition \ref{p51} and
Hirzebruch's inequality \cite[p. 140]{HiLines83} $$t_2 + \frac{3}{4}t_3 \geq d + \sum_{n>4} (n-4)t_n,$$ reduces the problem to prove $t_2 +
\frac{1}{4} t_3 \geq 3$. The latter inequality can be proved assuming the contrary, and checking the impossibility of the corresponding few
cases. For the equality, we arrive easily to the triangle and $t_{d-1}=1$ (trivial cases), and to the combinatorial case $d=9$, $t_3=12$, and
$t_n=0$ else. In this case, we write $\A=\{L_1,L_2,\ldots,L_9 \}$ such that $L_1\cap L_2 \cap L_3$ is one of the twelve $3$-points. Since over
any line of $\A$ there are exactly four $3$-points, there is a $3$-point outside of $L_1 \cup L_2 \cup L_3$. Say $L_4 \cap L_5 \cap L_6$ is this
point, then $L_7 \cap L_8 \cap L_9$ gives another $3$-point. This gives a $(3,3)$-net with three special members $\{ L_1,L_2,L_3 \}$, $\{
L_4,L_5,L_6 \}$ and $\{ L_7,L_8,L_9 \}$ (see \cite{Urzua2} for the definition of a net). One can prove that this $(3,3)$-net is unique up to
projective equivalence (see for example \cite{Urzua2}). This arrangement is projectively equivalent to the dual Hesse arrangement.

($3.$) This is a simple computation using the real topology of $\P^2(\R)$ through the fact that any line arrangement induces a cell
decomposition of $\P^2(\R)$.
\end{proof}

\begin{rmk}
Theorem \ref{t81} raises the question of whether the inequality $\bar{c}_1^2(\P^2(\C),\A) \leq \frac{8}{3} \bar{c}_2(\P^2(\C),\A)$ is a
topological consequence, in analogy to the case over $\R$. We remark that this log inequality over $\C$ relies on the Miyaoka-Yau inequality for
complex algebraic surfaces, and a result of Sakai (see \cite{HiLines83}). \label{r81}
\end{rmk}

\begin{example}
Let $\K$ be an algebraically closed field of Char$(\K)=m>0$. In $\P^2(\K)$, we have $m^2+m+1$ points with coordinates in $\F_m$, and there are
$m^2+m+1$ lines such that through each of these points passes exactly $m+1$ of these lines, and each of these lines contains exactly $m+1$ of
these points \cite[p. 426]{Do04}. These lines define an arrangement of $d=m^2+m+1$ lines, denoted by PG$(2,m)$, which we call
\underline{projective plane arrangement}. When $m=2$, this is the famous \underline{Fano arrangement}. Its combinatorial data is
$t_{m+1}=m^2+m+1$ and $t_n=0$ otherwise, and its log Chern numbers are $$\bar{c}_1^2(\P^2(\K),\text{PG}(2,m))= 3(m+1)(m-1)^2 \ \ \ \text{and} \
\ \ \bar{c}_2(\P^2(\K),\text{PG}(2,m))=(m+1)(m-1)^2,$$ and so $\bar{c}_1^2(\P^2(\K),\text{PG}(2,m))=3\bar{c}_2(\P^2(\K),\text{PG}(2,m))$ for
every $m$.

Hence, these arrangements provide examples for which equality holds in part $1.$ of Theorem \ref{t71}, for any characteristic. \label{e82}
\end{example}

\section{Simply connected surfaces with high Chern ratio.} \label{s9}

Throughout this and the next sections, we assume $\K=\C$. In this section, we will show how to compute the topological fundamental group of some
surfaces coming from $p$-th root covers. In particular, this allows us to find simply connected surfaces associated to certain pairs $(Z,\A)$.
This is an important characteristic of $p$-th root covers, which is not true for abelian coverings in general.

\begin{defi}
Let $S$ be a nonsingular projective surface, and let $B$ be a nonsingular projective curve. A \underline{fibration} is a surjective map $g: S
\rightarrow B$ with connected fibers. \label{d91}
\end{defi}

The following is a well-known fact about fibrations (see for example \cite{XiaoFundGr91}). We denote the topological fundamental group of $A$ by
$\pi_1(A)$.

\begin{prop}
Let $g: S \rightarrow B$ be a fibration. If $g$ has a section and a simply connected fiber, then $\pi_1(S) \simeq \pi_1(B)$. \label{p91}
\end{prop}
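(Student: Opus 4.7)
The plan is to show that the homomorphism $g_*: \pi_1(S) \to \pi_1(B)$ induced by $g$ is an isomorphism. The proof splits naturally into two halves, with the section $s$ controlling one direction and the simply connected fiber controlling the other. First I would handle the splitting: applying $\pi_1$ to the identity $g \circ s = \mathrm{id}_B$ yields $g_* \circ s_* = \mathrm{id}_{\pi_1(B)}$, so $g_*$ is a split surjection and $s_*$ is injective. Consequently, $\pi_1(S)$ decomposes as a semidirect product $\ker(g_*) \rtimes s_*\pi_1(B)$, and the proposition reduces to the assertion that $\ker(g_*) = 1$.

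For the second half, I would invoke the standard exact sequence for algebraic fibrations (see \cite{XiaoFundGr91} and the references therein): for any fiber $F_b$ of $g$, the natural sequence
$$\pi_1(F_b) \longrightarrow \pi_1(S) \longrightarrow \pi_1(B) \longrightarrow 1$$
is exact. Applied to the given simply connected fiber $F$, the left-hand term is trivial, forcing $\ker(g_*) = 1$. Combined with the splitting of the first step, this gives that $g_*$ is an isomorphism, hence $\pi_1(S) \simeq \pi_1(B)$.

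The main subtlety lies in the exact sequence above, which is not a formal consequence of the Serre long exact sequence of homotopy groups, since an algebraic fibration need not be a topological fibration in the sense of Serre. Over the Zariski open set $U \subseteq B$ on which $g$ is smooth, Ehresmann's theorem realizes $g^{-1}(U) \to U$ as a $C^\infty$ fiber bundle, so the desired sequence over $U$ is immediate. Extending to all of $B$ requires controlling the loops introduced around the critical values of $g$. Here the section plays an essential auxiliary role: the identity $g \circ s = \mathrm{id}_B$ forces $s$ to meet every fiber (in particular $F$) with multiplicity one, so no fiber of $g$ can be multiple. This rules out the principal source of extra torsion that would otherwise appear in $\ker(g_*)$ and allows the exact sequence to be applied to the specific simply connected fiber $F$ rather than only to a general one.
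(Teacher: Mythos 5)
The paper offers no proof of this proposition at all --- it is stated as a well-known fact with only a pointer to \cite{XiaoFundGr91} --- so there is nothing to compare line by line; your argument is a correct reconstruction of the standard proof that the citation encapsulates. The splitting $g_*\circ s_*=\mathrm{id}_{\pi_1(B)}$ and the reduction to $\ker(g_*)=1$ are fine, and the exact sequence you invoke does hold with an arbitrary fiber on the left once $g$ has no multiple fibers. One point in your closing discussion is misplaced, however: the absence of multiple fibers (guaranteed by the section) is what identifies the orbifold fundamental group of $B$ with $\pi_1(B)$, i.e.\ it controls the right-hand end of the sequence; it is \emph{not} what lets you replace the general fiber by the specific fiber $F$ on the left. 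That replacement is genuinely needed here, since in the application (Proposition \ref{p92}) the simply connected fiber is a singular one, a tree of rational curves, while Ehresmann only gives you the sequence for a smooth fiber over the locus where $g$ is a bundle. The correct justification is that a tubular neighborhood $N$ of $F$ deformation retracts onto $F$, so a nearby smooth fiber $F_{\mathrm{gen}}\subset N$ has $\mathrm{im}\bigl(\pi_1(F_{\mathrm{gen}})\to\pi_1(S)\bigr)\subseteq \mathrm{im}\bigl(\pi_1(F)\to\pi_1(S)\bigr)=1$, and hence the normal subgroup it generates, which is exactly $\ker(g_*)$ by the smooth-fiber sequence plus the no-multiple-fibers observation, is trivial. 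With that step supplied (it is contained in the cited reference), your proof is complete.
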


\begin{prop}
Let $Z$ be a nonsingular projective surface over $\C$, and let $\A$ be a simple crossing divisible arrangement of curves in $Z$. Let $(Y,\nA)$
be the associated pair. Assume there exists a fibration $g: Y \rightarrow B$ such that $\nA$ contains a section and a simply connected fiber of
$g$. Let $f: X \rightarrow Y$ be the $p$-th root cover with data $(Y,p,D,\L)$ in Section \ref{s6}.

Then, $\pi_1(X) \simeq \pi_1(B)$. \label{p92}
\end{prop}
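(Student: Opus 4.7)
The plan is to apply Proposition \ref{p91} to the composition $h := g \circ f : X \to B$, so I must verify that $h$ is a fibration that admits both a section and a simply connected fiber.

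First I would construct the section. The hypothesis provides a section $\Sigma$ of $g$ that is a component of $\nA$, hence equals one of the prime divisors, say $D_{i_0}$, of $D_{\mathrm{red}}$. Because the $p$-th root cover is totally ramified along every prime component of the branch locus, the reduced preimage $f^{-1}(\Sigma)_{\mathrm{red}}$ consists of a single irreducible curve $\tilde\Sigma$, and $f|_{\tilde\Sigma} : \tilde\Sigma \to \Sigma$ is an isomorphism (the Hirzebruch-Jung chains sitting over nodes of $\nA$ on $\Sigma$ are separated from the strict transform inside $\overline{W}$ and remain separated after the minimal resolution $f_3$). Composing with $g|_\Sigma : \Sigma \xrightarrow{\sim} B$ gives an isomorphism $h|_{\tilde\Sigma} : \tilde\Sigma \to B$, i.e., a section of $h$. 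This section also forces the Stein factorization of $h$ to be trivial: if $h = h' \circ \phi$ with $\phi : X \to B'$ having connected fibers and $h' : B' \to B$ finite, then $h'$ has degree $1$ because $h|_{\tilde\Sigma}$ already maps isomorphically onto $B$, and hence $h'$ is an isomorphism since $B$ is a smooth curve. Therefore $h$ is a fibration.

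Next I would identify a simply connected fiber of $h$. Let $F$ be the simply connected fiber of $g$ contained in $\nA$. A simply connected fiber of a fibration from a smooth projective surface to a smooth curve must be a tree of smooth rational curves: each irreducible component has trivial $\pi_1$, forcing it to be a $\P^1$, and the dual graph must be acyclic. The fiber of $h$ over $g(F) \in B$ equals $f^{-1}(F)$ as a set; its reduced structure is obtained by (i) replacing each component $D_i \subseteq F$ by its strict transform $\tilde D_i \cong \P^1$ (again using total ramification), and (ii) inserting, at every node of $\nA$ lying on $F$, the Hirzebruch-Jung chain of $\P^1$'s that resolves the corresponding singularity of $\overline W$. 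Nodes at which both components of $\nA$ lie inside $F$ contribute chains joining two $\tilde D_i$'s; nodes at which only one component lies in $F$ contribute a ``tail'' chain attached at one end to a single $\tilde D_i$. In either case no cycles appear, so $f^{-1}(F)_{\mathrm{red}}$ is still a tree of $\P^1$'s, hence simply connected; the scheme-theoretic ramification multiplicities do not affect the topology of the underlying reduced scheme.

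The main obstacle I anticipate is the bookkeeping in the fiber step: one has to check that inserting Hirzebruch-Jung chains at every node of $\nA$ on $F$ cannot accidentally create a cycle in the dual graph of $f^{-1}(F)_{\mathrm{red}}$. Because $\nA$ has only simple normal crossings, two distinct components meet only at isolated points and each such node produces exactly one chain, so cycles are ruled out locally node-by-node. Once these checks are completed, Proposition \ref{p91} applies directly to $h : X \to B$ and yields $\pi_1(X) \simeq \pi_1(B)$.
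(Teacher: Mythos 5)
Your proof is correct and follows essentially the same route as the paper: the induced fibration $g\circ f:X\to B$ acquires a section from the totally ramified preimage of the section in $\nA$, and a simply connected fiber because the preimage of the simply connected fiber of $g$ remains a tree of $\P^1$'s after inserting the Hirzebruch--Jung chains over the nodes, so Proposition \ref{p91} applies. The extra details you supply (the Stein factorization argument for connectedness of fibers and the node-by-node check that no cycles appear in the dual graph) are worthwhile elaborations of steps the paper leaves implicit.
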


\begin{proof}
Notice that we are assuming $p$ large enough, so that $\S(\A)$ has solutions. Also, the map $f: X \rightarrow Y$ is totally branch along $\nA$
(see Section \ref{s6}). Since $\nA$ contains a section of $g$, we have an induced fibration $\bar{g}: X \rightarrow B$, and $\bar{g}$ has a
section. Moreover, the inverse image under $f$ of the simply connected fiber of $g$ in $\nA$ is a simply connected fiber of $\bar{g}$, because
in the $p$-th root cover process, toric resolutions only add chains of nonsingular rational curves over the nodes of $\nA$. Therefore, by
Proposition \ref{p91}, the isomorphism $\pi_1(X) \simeq \pi_1(B)$ holds.
\end{proof}

\begin{example}
It is easy to find arrangements for which Proposition \ref{p92} applies. For example, consider the pencil $\PP: \ a(x^m-y^m) + b(y^m-z^m)=0$ in
$\P^2(\C)$ with $m>1$. The general fiber is a Fermat curve of degree $m$, and $\PP$ has exactly three singular fibers: $x^m-y^m$, $y^m-z^m$, and
$x^m-z^m$. They form the arrangement $\A=$ CEVA$(m)$ in Example \ref{e81}. Consider the associated pair $(Y,\nA)$. Then, there is a fibration
$g: Y \rightarrow \P^1(\C)$ with three simply connected singular fibers, and $m^2$ sections. Therefore, the corresponding surfaces $X$ coming
from $(\P{_\C}^2,\text{CEVA}(m))$ are simply connected. By the computations in Example \ref{e81}, we conclude that the random surfaces
associated to $(\P{_\C}^2,\text{CEVA}(m))$ are simply connected of general type with Chern ratio arbitrarily close to
$\frac{5m^2-6m-3}{2m^2-3m}$. \label{e91}
\end{example}

We can actually say more in the case of complex line arrangements.

\begin{thm} Let $\A$ be a line arrangement in $\P^2(\C)$. Assume that $\bar{c}_2(\P^2(\C),\A) \neq 0$. Then, there exist nonsingular simply connected
projective surfaces $X$ over $\C$ with \begin{center} {\large $\frac{c_1^2(X) }{c_2(X)}$} arbitrarily close to {\large
$\frac{\bar{c}_1^2(\P^2(\C),\A)}{\bar{c}_2(\P^2(\C),\A)}$} $\leq \frac{8}{3}$. \end{center} Moreover, {\large $\frac{c_1^2(X) }{c_2(X)}$} is
arbitrarily close to $\frac{8}{3}$ if and only if $\A$ is the dual Hesse arrangement. \label{t91}
\end{thm}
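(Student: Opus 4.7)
The plan is to combine three earlier results: Theorem \ref{t71} produces random surfaces whose Chern ratio approaches the log Chern ratio of $(\P^2(\C),\A)$; Theorem \ref{t81}(2) bounds that log Chern ratio by $8/3$; and Proposition \ref{p92} will force simple-connectedness. Any line arrangement is simple crossing and divisible by Example \ref{e51}, so under the hypothesis $\bar c_2(\P^2(\C),\A)\neq 0$ Theorem \ref{t71} already produces nonsingular projective surfaces $X$ with $c_1^2(X)/c_2(X)$ arbitrarily close to $\bar c_1^2(\P^2(\C),\A)/\bar c_2(\P^2(\C),\A)$, which is at most $8/3$ by Theorem \ref{t81}(2). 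It remains to make these $X$ simply connected and to decide when $8/3$ itself is reached.

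To apply Proposition \ref{p92} I would pick any two lines $L_1,L_2\in\A$, let $P=L_1\cap L_2$, and use the pencil of lines through $P$. Because $t_d=0$ by the definition of an arrangement, some $L_k\in\A$ avoids $P$; its strict transform is a section of the pencil and lies in $\bar\A$. Each line of $\A$ through $P$ becomes a fiber whose total transform in $Y$ is a tree of smooth rational curves contained in $\bar\A$, hence simply connected. When $P$ is an $n$-point of $\A$ with $n\geq 3$, the blow-up $\sigma:Y\to\P^2(\C)$ already resolves the pencil, giving a morphism $g:Y\to\P^1(\C)$; the exceptional $E_P\subset\bar\A$ is also a section, and Proposition \ref{p92} yields $\pi_1(X)=\pi_1(\P^1(\C))=1$.

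If every intersection of $\A$ is a 2-point, the pencil is only a rational map on $Y$, so I would additionally blow up $Y$ at $P$ to obtain $Y'$ with exceptional divisor $E_P$, and take $\bar\A':=\widetilde{\bar\A}+E_P$. A short check using Proposition \ref{p31} shows that this log blow-up preserves both log Chern numbers, so the log Chern ratio associated to $(Y',\bar\A')$ equals $\bar c_1^2(\P^2(\C),\A)/\bar c_2(\P^2(\C),\A)$. Running the $p$-th root cover construction of Section \ref{s6} on $(Y',\bar\A')$, with the extra multiplicity on $E_P$ chosen congruent to $\mu_1+\mu_2\pmod p$ (ruling out only finitely many partitions, so the bad-set estimates in the proof of Theorem \ref{t71} remain valid), yields surfaces $X'$ with the same asymptotic Chern ratio. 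Proposition \ref{p92} then applies to $(Y',\bar\A')$ since $E_P$ is a section and $\widetilde L_1$ is a simply connected fiber of the induced morphism $g:Y'\to\P^1(\C)$.

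For the moreover clause I would invoke the equality case of Theorem \ref{t81}(2): the ratio $8/3$ is realized only by a triangle, by an arrangement with $t_{d-1}=1$, or by the dual Hesse arrangement. Proposition \ref{p51} gives $\bar c_2=3+3-6=0$ for the triangle and $\bar c_2=3+(d-1)+(d-2)-2d=0$ when $t_{d-1}=1$, so the hypothesis $\bar c_2\neq 0$ excludes both; the dual Hesse arrangement survives and does realize $8/3$ by Example \ref{e81} with $m=3$, so the random surfaces attached to it give Chern ratios arbitrarily close to $8/3$, while any other $\A$ has $\bar c_1^2/\bar c_2$ strictly less than $8/3$ and its random surfaces cannot approach $8/3$. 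The main technical obstacle is the 2-point-only case of the third paragraph, where one must both verify invariance of the log Chern numbers under the additional blow-up and confirm that the adjusted weighting on $E_P$ keeps the construction inside the random-surface framework of Theorem \ref{t71}; the rest of the argument is assembled cleanly from results already established in the paper.
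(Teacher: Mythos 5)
Your argument is correct and reaches the theorem by the same three pillars the paper uses (Theorem \ref{t71}, Theorem \ref{t81}(2), and the fibration criterion of Propositions \ref{p91}--\ref{p92}), but the way you produce the fibration is genuinely different. The paper picks a point $P$ on a single line $L_1$ that is \emph{nonsingular} for $\A$, uses the pencil spanned by $L_1$ and an auxiliary line $L\notin\A$, blows up $P$ on $Y$, and pulls back the \emph{same} branch data $D'=\tau^*(D)$; the payoff is that $X'\to X$ is then a birational morphism contracting $f'^{-1}(E)$ to a point, so $\pi_1(X)\simeq\pi_1(X')$ is computed on $X'$ while all Chern-number statements are read off from the original $X$ of Theorem \ref{t71} with no recomputation. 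You instead base the pencil at an intersection point of $\A$. This is cleaner when $\A$ has an $n$-point with $n\geq 3$, since $\sigma$ has already resolved the base point and Proposition \ref{p92} applies verbatim with $E_P$ as section and the total transform of a line through $P$ as simply connected fiber. In the nodes-only case you pay for it: you must check that the extra log blow-up preserves both log Chern numbers (true, and your computation via Proposition \ref{p31} is the right one) and that weighting $E_P$ by $\mu_1+\mu_2$ keeps the construction inside the framework of Theorem \ref{t71}. Both checks go through --- note that since $\mu_1+\cdots+\mu_d=p$ with all $\mu_i>0$, one automatically has $0<\mu_1+\mu_2<p$, so no partitions at all need to be discarded for the multiplicity of $E_P$ (your ``finitely many'' is a harmless overstatement of a non-issue), and the two new nodes on $E_P$ contribute error terms of the shape $p-\mu_1'(\mu_1+\mu_2)$ already covered by case (2) of the proof of Theorem \ref{t71}, using $s(q,p)=s(q',p)$ and $l(q,p)=l(q',p)$. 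Your handling of the ``moreover'' clause, observing via Proposition \ref{p51} that the triangle and the $t_{d-1}=1$ arrangements have $\bar{c}_2=0$ and are excluded by hypothesis, is more explicit than the paper's one-line appeal to Theorem \ref{t81}(2) and is a worthwhile addition. In short: your route trades the paper's one uniform comparison-of-models step for a case split, gaining a blow-up-free argument in the generic case at the cost of re-justifying the random-partition estimates in the nodes-only case.
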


\begin{proof}
Let $\A=\{L_1,\ldots,L_d\}$ be an arbitrary arrangement of $d$ lines in $Z=\P^2(\C)$ (with $t_d=0$ as always). Let $P$ be a point in $L_1$ which
is nonsingular for $\A$. We consider the trivial pencil $\PP: \alpha L_1 + \beta L=0$, where $L$ is a fixed line in $\P^2(\C)$ passing through
$P$. Let $f: X \rightarrow Y$ be the $p$-th root cover associated to $(Z,\A)$ as in Theorem \ref{t71}. Let us denote the cover data by
$(Y,p,D,\L)$. Let $\tau: Y' \rightarrow Y$ be the blow-up of $Y$ at $P \in \nA$, and let $E$ be the exceptional curve over $P$. Consider the
divisor $D'= \tau^*(D)$. Then, we naturally have a $p$-th root cover $f': X' \rightarrow Y'$ with data $(Y',p,D',\tau^*(\L))$, and so a
birational map $\varsigma: X' \rightarrow X$, which is an isomorphism when restricted to $X'\setminus f'^{-1}(E)$. This map is actually a
birational morphism sending $f'^{-1}(E)$ to the point $f^{-1}(P)$. In particular, $\pi_1(X') \simeq \pi_1(X)$.

To compute $\pi_1(X')$, we look at the fibration $g:X \rightarrow \P^1(\C)$ induced by the pencil $\PP$. Notice that $E$ produces a section for
$g$. Moreover, if $L'_1$ is the strict transform of $L_1$ under $\tau$, then $f'^{-1}(L'_1)$ is simply connected, formed by a tree of
$\P^1(\C)$'s. Therefore, by Proposition \ref{p91}, $X'$ is simply connected. The rest of the Theorem follows from Theorem \ref{t71}, and part
$2.$ of Proposition \ref{t81}.
\end{proof}

\section{New record for Chern ratios of simply connected surfaces.} \label{s10}

Let $m\geq 4$ be an integer. In this section, we use random surfaces to construct examples of nonsingular simply connected projective surfaces
of general type, with Chern ratio arbitrarily close to $\frac{5m^2-12m+6}{2m^2-6m+6}$. Hence, for $m \in \{4,5,6 \}$, we have
$\frac{5m^2-12m+6}{2m^2-6m+6}>2.\overline{703}$, and so these surfaces improve the current record for Chern ratios given by Persson, Peters, and
Xiao in \cite{PerPetXiao96}. At the same time, these examples show that random surfaces may provide a tool to attack the open problem of finding
simply connected surfaces of general type with Chern ratio arbitrarily close to $3$. Our method encodes this problem into the existence of a
suitable arrangement of curves.

Let us consider the line arrangement CEVA$(m)$ in $\P^2(\C)$ (see Example \ref{e81}). It has three $m$-points. Let $\tau: Z \rightarrow
\P^2(\C)$ be the blow-up at these three points. Let \underline{CEVA}$(m)$ be the proper transform of CEVA$(m)$ under $\tau$. In this way, the
arrangement \underline{CEVA}$(m)$ does not contain any of the exceptional divisors of $\tau$, and so it is not ``log-equivalent" to CEVA$(m)$.

We observe that \underline{CEVA}$(m)$ is formed by three arrangements $\A_1$, $\A_2$, and $\A_3$. Each of them is formed by $m$ rational curves
coming from the proper transforms of the $m$ lines passing through each of the $m$-points of CEVA$(m)$. Moreover, for each $i$, we have that
$\O_{Z}(C) \simeq \O_{Z}(C')$ for all curves $C,C'$ in $\A_i$. Therefore, it is easy to verify that $$ \text{\underline{CEVA}}(m)= \A_1 \cup
\A_2 \cup \A_3
$$ is a simple crossing divisible arrangement in $Z$ (see Definitions \ref{d51} and \ref{d52}). Its combinatorial data is given by $d=3m$, $t_3=m^2$,
and $t_n=0$ for $n\neq m$, and all of its curves have self-intersection equal to zero. By Proposition \ref{p51}, the log Chern numbers
corresponding to $(Z,\text{\underline{CEVA}}(m))$ are $$\bar{c}_1^2(Z,\text{\underline{CEVA}}(m))= 5m^2-12m+6 \ \ \ \ \ \ \
\bar{c}_2(Z,\text{\underline{CEVA}}(m)) = 2m^2-6m+6.$$

By Theorem \ref{t71}, there exist random surfaces $X$ associated to $(Z,\text{\underline{CEVA}}(m))$ with $\frac{c_1^2(X)}{c_2(X)}$ arbitrarily
close to $\frac{5m^2-12m+6}{2m^2-6m+6}$. Moreover, by Remark \ref{r72}, they are of general type. We notice that the minimal models of the
surfaces $X$ may only improve this ratio, that is, make it closer to $3$. However, we conjecture that the asymptotic result remains unchanged
(see Conjecture \ref{co71}).

Each exceptional divisor of $\tau$ induces a fibration $Z \rightarrow \P^1(\C)$. By fixing any of them, we see that \underline{CEVA}$(m)$
contains a simply connected fiber and a section. We can now use Proposition \ref{p92} to prove that the random surfaces $X$ associated to
$(Z,\text{\underline{CEVA}}(m))$ are simply connected. The maximum value of $\frac{5m^2-12m+6}{2m^2-6m+6}$ in the range $m\geq 4$ is
$\frac{71}{26}$, which is attained by $m=5$. This proves the following theorem.

\begin{thm}
There exist nonsingular simply connected projective surfaces of general type with Chern ratio arbitrarily close to $\frac{71}{26}\approx
2.730769$. \label{t101}
\end{thm}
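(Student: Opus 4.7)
The plan is to apply Theorem~\ref{t71} to a carefully chosen simple crossing divisible arrangement on a blow-up of $\P^2(\C)$, and then invoke Proposition~\ref{p92} to verify simple connectedness. The starting point is the CEVA$(m)$ arrangement in $\P^2(\C)$ of Example~\ref{e81}, which for $m\geq 4$ has $d=3m$, exactly three $m$-points, and $m^2$ triple points. First I would blow up $\P^2(\C)$ at the three $m$-points via $\tau:Z\to\P^2(\C)$, and pass to the proper transform $\underline{\text{CEVA}}(m)$ (excluding the three exceptional divisors). On $Z$, the resulting configuration splits into three subarrangements $\A_1,\A_2,\A_3$, one for each $m$-point, and the curves inside each $\A_i$ are algebraically equivalent; this gives the three line bundles $\L_i$ of Definition~\ref{d52}, exhibiting $\underline{\text{CEVA}}(m)$ as a simple crossing divisible arrangement.

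Next I would compute the log Chern invariants using Proposition~\ref{p51}. The combinatorial data is $d=3m$, $t_3=m^2$, $t_n=0$ otherwise; each component is a smooth rational curve of self-intersection $0$; and $c_1^2(Z)=6$, $c_2(Z)=6$. A direct substitution gives
\[
\bar{c}_1^2(Z,\underline{\text{CEVA}}(m))=5m^2-12m+6,\qquad \bar{c}_2(Z,\underline{\text{CEVA}}(m))=2m^2-6m+6,
\]
both positive for $m\geq 4$. Theorem~\ref{t71} then produces random surfaces $X$ with Chern ratio arbitrarily close to $\frac{5m^2-12m+6}{2m^2-6m+6}$, and Remark~\ref{r72} guarantees that they are of general type.

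For simple connectedness, I would use any of the three exceptional divisors $E$ of $\tau$. Since the pencil of lines through one of the $m$-points of CEVA$(m)$ pulls back to a fibration $g:Z\to\P^1(\C)$ whose section is $E$ (which lies in $\underline{\text{CEVA}}(m)$), and since the total transform under $\sigma$ of any line $L_i$ in one of the subarrangements is a tree of $\P^1$'s (hence a simply connected fiber of the induced fibration on $Y$ contained in $\nA$), Proposition~\ref{p92} applies and gives $\pi_1(X)\simeq\pi_1(\P^1(\C))=1$. Finally, I would optimize $h(m)=\tfrac{5m^2-12m+6}{2m^2-6m+6}$ over integers $m\geq 4$; the derivative analysis (or direct evaluation at $m=4,5,6,\dots$) shows that the maximum is attained at $m=5$ and equals $\tfrac{71}{26}$. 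The main obstacle is really the bookkeeping in the simply connected step: one must make sure that $E$ genuinely sits inside $\nA$ (hence inside the branch locus of $f$) and that the fiber chosen is contained in $\nA$ as well, because Proposition~\ref{p92} requires both the section and the simply connected fiber to live in $\nA$; with the subarrangement $\A_i$ corresponding to the chosen $m$-point, this is automatic, but it is the one place where the construction could fail if one picked the wrong pencil.
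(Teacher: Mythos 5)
Your construction is the same as the paper's: blow up the three $m$-points of CEVA$(m)$, take the proper transform $\underline{\text{CEVA}}(m)$ \emph{without} the exceptional divisors, check it is simple crossing divisible with $v=3$, compute $\bar{c}_1^2=5m^2-12m+6$ and $\bar{c}_2=2m^2-6m+6$ via Proposition \ref{p51}, apply Theorem \ref{t71} and Remark \ref{r72}, and optimize at $m=5$ to get $\frac{71}{26}$. All of that is correct and matches the paper.

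There is, however, a genuine error in your simple connectedness step, precisely at the point you flag as the delicate one. You claim the exceptional divisor $E$ ``lies in $\underline{\text{CEVA}}(m)$'' and use it as the section required by Proposition \ref{p92}. This contradicts your own setup: the arrangement is defined by \emph{excluding} the three exceptional divisors, and that exclusion is exactly what makes the log Chern ratio exceed $\frac{8}{3}$ (including them would give an arrangement log-equivalent to CEVA$(m)$, whose ratio is at most $\frac{8}{3}$ by Theorem \ref{t81}). Since $E$ is not a component of $\nA$, it is not in the branch divisor, so $f^{-1}(E)\to E$ is a degree-$p$ cyclic cover of $\P^1(\C)$ branched at the $m$ points where $E$ meets the proper transforms of the lines through the corresponding $m$-point; by Riemann--Hurwitz this curve has genus $\frac{(p-1)(m-2)}{2}>0$ and is certainly not a section of the induced fibration on $X$. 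The hypothesis of Proposition \ref{p92} that the section be contained in $\nA$ is there precisely so that its reduced preimage under $f$ is again a section. The fix is what the paper does: fix one $m$-point $P_i$ and take the fibration induced by the pencil of lines through $P_i$; the simply connected fiber in $\nA$ is the total transform of a line of $\A_i$ (a tree of $\P^1$'s, as you say), while the section in $\nA$ is the proper transform of any line from $\A_j$ with $j\neq i$, since such a line meets every member of the pencil through $P_i$ exactly once. With that substitution your argument goes through and agrees with the paper's proof.
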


We finish with a concrete example satisfying $\frac{c_1^2}{c_2}> 2.\overline{703}$. It comes from $\text{\underline{CEVA}}(5)$. Consider the
following partitions of the prime number $61\,169$,

\vspace{0.2cm}
\begin{center} \begin{tabular}{c}
$1 + 307 + 7\,031 + 11\,109 + 42\,721 =61\,169$ \\
$589 + 2\,007 + 5\,007 + 20\,001 + 33\,565 = 61\,169$ \\
$1\,009 + 3\,001 + 13\,003 + 17\,807 + 26\,349 = 61\,169.$
\end{tabular}
\end{center}
\vspace{0.2cm}

As before, we split $\text{\underline{CEVA}}(5)$ into three subarrangements. Each of the equalities above assigns multiplicities to the curves
in each of these subarrangements. Then, via our procedure, we obtain a nonsingular simply connected projective surface $X$ of general type with
$c_1^2(X)=4\,341\,016$ and  $c_2(X)=1\,595\,264 $, and so $$\frac{c_1^2(X)}{c_2(X)} = \frac{542\,627}{199\,408} \approx 2.72119.$$

\section{Appendix: Dedekind sums and continued fractions.} \label{ap}

Most of the material in this appendix can be found in several places. Let $p$ be a prime number, and let $q$ be an integer satisfying $0<q<p$.

As before, we write the Dedekind sum associated to $(q,p)$ as $$s(q,p)=\sum_{i=1}^{p-1}
\Bigl(\Bigl(\frac{i}{p}\Bigr)\Bigr)\Bigl(\Bigl(\frac{iq}{p}\Bigr)\Bigr)$$ where $((x))=x-[x]-\frac{1}{2}$ for any rational number $x$. On the
other hand, we have the negative-regular continued fraction $$ \frac{p}{q} = e_1 - \frac{1}{e_2 - \frac{1}{\ddots - \frac{1}{e_s}}}$$ which we
abbreviate as $\frac{p}{q}=[e_1,...,e_s]$. According to our previous notation, $s=l(q,p)$. This continued fraction is defined by the following
recursion formula: let $b_{-1}=p$ and $b_0=q$, and define $e_i$ and $b_i$ by means of the equation $b_{i-2}=b_{i-1}e_i - b_i$ with $0\leq b_i <
b_{i-1}$ for $i \in \{1,2,\ldots,s \}$. In this way,
$$b_s=0<b_{s-1}=1<b_{s-2}<\ldots<b_1<b_0=q<b_{-1}=p.$$
In particular, $1\leq s \leq p-1$. By induction, one can prove that for every $i \in \{1,2,\ldots,s \}$, we have $b_{i-2}=(-1)^{s+1-i}
\det(M_i)$, where $M_i$ is the matrix
\[ \left(
\begin{array}{ccccccc}
-e_{i} & 1 & 0 & 0 &  \ldots & 0 \\
1 & -e_{i+1} & 1 & 0 & \ldots & 0 \\
0 & 1 & \ddots & \ddots & \ddots & \vdots \\
\vdots & \ddots & \ddots & \ddots & 1 & 0 \\
0 & \ldots & 0 & 1 & -e_{s-1} & 1 \\
0 & \ldots & 0 & 0 & 1 & -e_s \\
\end{array} \right)\]
Hence, $s= p-1$ if and only if $e_i=2$ for all $i$.

Another well-known way to look at this continued fraction is the following. Let us define the matrix
$$A(e_1,e_2,...,e_s) = \left( \begin{array}{cc} e_s & 1 \\ -1 & 0 \end{array} \right) \left( \begin{array}{cc} e_{s-1} & 1 \\ -1 & 0
\end{array} \right) \cdots \left( \begin{array}{cc} e_1 & 1 \\ -1 & 0
\end{array} \right),$$ and the recurrences $P_{-1}=0$, $P_0=1$, $P_{i+1}=e_{i+1} P_i - P_{i-1}$; $Q_{-1}=-1$, $Q_0=0$, $Q_{i+1}=e_{i+1}Q_i - Q_{i-1}$.
Then, again by induction, one can show that $\frac{P_i}{Q_i}=[e_1,e_2,...,e_i]$, and $$A(e_1,...,e_i)=\left( \begin{array}{cc} P_i & Q_i \\
-P_{i-1} & -Q_{i-1} \end{array} \right)$$ for all $i \in \{1,2,...,s \}$. The following lemma is proved using that det$\big(A(e_1,...,e_i)\big)
= 1$.

\begin{lem}
Let $p$ be a prime number and $q$ be an integer such that $0<q<p$. Let $q'$ be the integer satisfying $0<q'<p$ and $qq'\equiv 1 (mod \ p)$.
Then, $\frac{p}{q}=[e_1,...,e_s]$ implies $\frac{p}{q'}=[e_s,...,e_1]$. \label{ap1}
\end{lem}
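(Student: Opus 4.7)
The plan is to extract everything from the matrix factorization presented just before the lemma. Writing $M_i = \left(\begin{smallmatrix} e_i & 1 \\ -1 & 0 \end{smallmatrix}\right)$, we have $A(e_1,\ldots,e_s) = M_s M_{s-1}\cdots M_1$ and the stated identity
\[ A(e_1,\ldots,e_s) = \begin{pmatrix} P_s & Q_s \\ -P_{s-1} & -Q_{s-1} \end{pmatrix}, \qquad \frac{P_s}{Q_s}=\frac{p}{q}. \]
Since $p$ is prime and $0<q<p$, this fraction is already in lowest terms, so $P_s=p$ and $Q_s=q$. Taking determinants (each $\det M_i=1$) yields $q\,P_{s-1} - p\,Q_{s-1}=1$, so $q\,P_{s-1}\equiv 1\pmod{p}$. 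To conclude that $P_{s-1}$ is exactly the representative $q'$, I would verify that $0<P_{s-1}<p$; this follows from $e_i\geq 2$ (forced by the defining inequalities $0\leq b_i<b_{i-1}$ of the negative-regular continued fraction, since $b_{i-2}>b_{i-1}$ makes $e_i=\lceil b_{i-2}/b_{i-1}\rceil\geq 2$), which, combined with $P_{-1}=0<1=P_0$ and the recursion $P_{i+1}=e_{i+1}P_i-P_{i-1}$, gives strict monotonicity $P_i<P_{i+1}$ by induction. Hence $P_{s-1}=q'$.

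The remaining task is to read off from this same matrix the continued fraction of $p/q'$, i.e.\ to connect $A(e_1,\ldots,e_s)$ to $A(e_s,\ldots,e_1) = M_1 M_2\cdots M_s$. The key observation is the conjugation identity $M_i^T = K M_i K$ with $K=\mathrm{diag}(1,-1)$, which on taking transposes of the product $M_sM_{s-1}\cdots M_1$ (and noting $K^2=I$) yields
\[ A(e_s,\ldots,e_1) = K\,A(e_1,\ldots,e_s)^T\,K = \begin{pmatrix} P_s & P_{s-1} \\ -Q_s & -Q_{s-1}\end{pmatrix}. \]
Applying the general identity to the reversed tuple, the top-right entry of this matrix is precisely $\tilde{Q}_s$, the denominator of the convergent $\tilde{P}_s/\tilde{Q}_s = [e_s,\ldots,e_1]$. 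So $\tilde{P}_s=p$ and $\tilde{Q}_s=P_{s-1}=q'$, giving $p/q'=[e_s,\ldots,e_1]$ as required.

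The main obstacle I anticipate is not computational but conceptual: spotting the conjugation $M_i^T = KM_iK$ that turns the transpose of a product into the reversed product. Once that observation is in place, the rest of the argument reduces to one determinant computation, one straightforward monotonicity induction to pin down the range of $P_{s-1}$, and matching entries in a $2\times 2$ matrix.
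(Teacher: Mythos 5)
Your argument is correct and is exactly the route the paper intends (and leaves almost entirely implicit): it uses the matrix $A(e_1,\ldots,e_s)$, the determinant identity $qP_{s-1}-pQ_{s-1}=1$ to identify $P_{s-1}=q'$, and the reversal symmetry of the product to identify $[e_s,\ldots,e_1]=p/P_{s-1}$. The conjugation $M_i^T=KM_iK$ is a clean way to package the continuant-reversal step, and your monotonicity check that $0<P_{s-1}<p$ fills in the one detail the paper's one-line hint omits.
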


We now express the numbers $\alpha_i= -1 + \frac{b_{i-1}}{p} + \frac{b'_{s-i}}{p}$ in terms of $P_i$'s and $Q_i$'s (these numbers appear in
Proposition \ref{p42}). Since
$$A(e_1,...,e_s)= \left( \begin{array}{cc} b_{i-1} & b_i \\ x & y \end{array} \right) \left(
\begin{array}{cc} e_i & 1 \\ -1 & 0 \end{array} \right) \left( \begin{array}{cc} P_{i-1} & Q_{i-1} \\ -P_{i-2} & -Q_{i-2} \end{array} \right)$$
we have $b_{i-1}=qP_{i-1}-pQ_{i-1}$ and $b'_{s-i}=P_{i-1}$.

\begin{lem}
$\sum_{i=1}^s \alpha_i(2-e_i) = \sum_{i=1}^s (e_i-2) + \frac{q+q'}{p} - 2 \frac{p-1}{p}$. \label{ap2}
\end{lem}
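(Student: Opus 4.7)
The plan is to expand $\alpha_i(2-e_i)$ into three pieces and then telescope the two nontrivial sums using the defining recurrence of the $b_i$'s and the reversal lemma (Lemma \ref{ap1}).

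First, I would simply distribute:
\begin{equation*}
\sum_{i=1}^s \alpha_i(2-e_i) \;=\; \sum_{i=1}^s (e_i-2) \;+\; \frac{1}{p}\sum_{i=1}^s b_{i-1}(2-e_i) \;+\; \frac{1}{p}\sum_{i=1}^s b'_{s-i}(2-e_i),
\end{equation*}
using the definition $\alpha_i = -1 + \frac{b_{i-1}}{p} + \frac{b'_{s-i}}{p}$. The first summand already matches the target, so the whole lemma reduces to showing
\begin{equation*}
\sum_{i=1}^s b_{i-1}(2-e_i) \;+\; \sum_{i=1}^s b'_{s-i}(2-e_i) \;=\; q+q'-2(p-1).
\end{equation*}

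For the first sum I would use the recurrence $b_{i-2}=b_{i-1}e_i-b_i$, rewritten as $b_{i-1}e_i = b_{i-2}+b_i$, to get the telescoping identity $b_{i-1}(2-e_i) = 2b_{i-1}-b_{i-2}-b_i$. Summing for $i=1,\ldots,s$ and collecting coefficients by index, the middle terms cancel and only the boundary terms survive, yielding
\begin{equation*}
\sum_{i=1}^s b_{i-1}(2-e_i) \;=\; -b_{-1} + b_0 + b_{s-1} - b_s \;=\; -p + q + 1 - 0 \;=\; q+1-p.
\end{equation*}

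For the second sum I would re-index by $k=s-i$ to turn it into $\sum_{k=0}^{s-1} b'_k(2-e_{s-k})$. Here I invoke Lemma \ref{ap1}: the continued fraction of $p/q'$ is $[e_s,\ldots,e_1]$, so the sequence $b'_k$ is the one attached to the reversed tuple $\tilde e_j := e_{s-j+1}$. The corresponding recurrence gives $b'_k \, e_{s-k} = b'_{k-1}+b'_{k+1}$, and exactly the same telescoping argument produces
\begin{equation*}
\sum_{k=0}^{s-1} b'_k(2-e_{s-k}) \;=\; -b'_{-1}+b'_0+b'_{s-1}-b'_s \;=\; q'+1-p.
\end{equation*}
Adding the two gives $q+q'+2-2p = q+q'-2(p-1)$, which when divided by $p$ is precisely the nontrivial term $\frac{q+q'}{p}-2\frac{p-1}{p}$, completing the proof.

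The calculation is essentially routine telescoping; the only step requiring mild care is the second sum, where one must correctly identify that the $b'_k$ satisfy the recurrence associated with the reversed continued fraction (this is exactly where Lemma \ref{ap1} enters). Once that identification is made, the boundary values $b'_{-1}=p$, $b'_0=q'$, $b'_{s-1}=1$, $b'_s=0$ are the same shape as for the $b_i$'s and the computation closes.
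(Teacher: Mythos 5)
Your proof is correct, and it takes a somewhat different route from the paper's. The paper funnels both terms through the convergents: it uses the matrix identity to write $b_{i-1}=qP_{i-1}-pQ_{i-1}$ and $b'_{s-i}=P_{i-1}$, combines them into $(q+1)P_{i-1}-pQ_{i-1}$, and then telescopes the $P$- and $Q$-recurrences $e_iP_{i-1}=P_i+P_{i-2}$, $e_iQ_{i-1}=Q_i+Q_{i-2}$, closing with $qP_{s-1}-pQ_{s-1}=1$ and $P_{s-1}=q'$. You instead keep the two sums $\sum b_{i-1}(2-e_i)$ and $\sum b'_{s-i}(2-e_i)$ separate and telescope each directly from the defining recurrence $b_{i-2}=b_{i-1}e_i-b_i$, invoking Lemma \ref{ap1} only to justify that the $b'_k$ obey the same recurrence with the reversed partial quotients, so that $b'_k e_{s-k}=b'_{k-1}+b'_{k+1}$. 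Your version is more symmetric and more elementary --- it never needs the $P_i,Q_i$ at all, and the two boundary computations $q+1-p$ and $q'+1-p$ are visibly parallel --- whereas the paper's version reuses machinery it has already set up for expressing $\alpha_i$ and for Lemma \ref{ap1} itself. (Incidentally, the paper's intermediate line $\sum_{i=1}^s(2Q_{i-1}-Q_i-Q_{i-2})=-1+Q_{s-1}-q$ has a sign slip, since $Q_{-1}=-1$ gives $+1+Q_{s-1}-q$; the final total $q+P_{s-1}+2-2p$ is nonetheless correct, and your computation avoids the issue entirely.) The one step in your argument that genuinely needs care --- identifying the recurrence satisfied by the $b'_k$ via the reversal of the continued fraction --- is handled correctly, and the boundary values $b'_{-1}=p$, $b'_0=q'$, $b'_{s-1}=1$, $b'_s=0$ are the right ones.
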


\begin{proof}
$\sum_{i=1}^s \alpha_i(2-e_i) = \sum_{i=1}^s (e_i-2) + \frac{1}{p} \sum_{i=1}^s \left( (q+1)P_{i-1} - p Q_{i-1} \right)(2-e_i)$. By definition,
$e_iP_{i-1}=P_i+P_{i-2}$ and $e_iQ_{i-1}=Q_i+ Q_{i-2}$, so $$\sum_{i=1}^s \left( (q+1)P_{i-1} - p Q_{i-1} \right)(2-e_i) = (q+1)\sum_{i=1}^s
(2P_{i-1}-P_i-P_{i-2})-p\sum_{i=1}^s (2Q_{i-1}-Q_i-Q_{i-2}).$$ $\sum_{i=1}^s (2P_{i-1}-P_i-P_{i-2})=1+P_{s-1}-p$ and $\sum_{i=1}^s
(2Q_{i-1}-Q_i-Q_{i-2})=-1+Q_{s-1}-q$, so $$ \sum_{i=1}^s \left( (q+1)P_{i-1} - p Q_{i-1} \right)(2-e_i) = q+P_{s-1}+2-2p$$ since
$qP_{s-1}-pQ_{s-1}=1$.
\end{proof}

Below, we describe the behavior of Dedekind sums and lengths of negative-regular continued fractions, for $p$ large and $q$ not in a certain bad
set. All of what follows relies on the work of Girstmair (see \cite{Gi03} and \cite{Gi06}).

\begin{defi}(from \cite{Gi03})
A \underline{Farey point} (F-point) is a rational number of the form $p \cdot \frac{c}{d}$, $1\leq d \leq \sqrt{p}$, $0\leq c \leq d$,
$(c,d)=1$. Fix an arbitrary constant $C>0$. The interval $$ I_{\frac{c}{d}} = \{ x: \ 0\leq x \leq p, \ \Big|x - p \cdot \frac{c}{d} \Big|\leq C
\frac{\sqrt{p}}{d^2} \} $$ is called the \underline{F-neighbourhood} of the point $p \cdot \frac{c}{d}$. We write $\FF_d = \bigcup_{c \in \CC}
I_{\frac{c}{d}}$ for the union of all neighbourhoods belonging to F-points of a fixed $d$, where $\CC= \{ c: \ 0\leq c \leq d \ \& \ (c,d)=1
\}$. The \underline{bad set $\FF$} is defined as $$\FF = \bigcup_{1\leq d \leq \sqrt{p}} \FF_d.$$ The integers $q$, $0\leq q < p$, lying in
$\FF$ are called \underline{F-neighbours}. Otherwise, $q$ is called an \underline{ordinary integer}. \label{ap3}
\end{defi}

The following two theorems are stated and proved in \cite{Gi03}.
\begin{thm}
Consider $p\geq 17$, and $q$ ordinary integer. Then, $|s(q,p)|\leq \big(2+\frac{1}{C} \big)\sqrt{p} + 5$. \label{ap4}
\end{thm}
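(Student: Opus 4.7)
The strategy is to express $12\,s(q,p)$ through the regular continued fraction of $q/p$ via iterated Dedekind reciprocity, and then use the ordinariness hypothesis as a uniform upper bound on the partial quotients.

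First, set up the Euclidean sequence $r_0=p$, $r_1=q$, $r_{i-1}=a_i r_i+r_{i+1}$ with $0\leq r_{i+1}<r_i$, ending at $r_n=1$, $r_{n+1}=0$, with regular convergents $P_i/Q_i$ of $q/p$. Iterating Rademacher's three-term reciprocity
$$s(r_i,r_{i-1})+s(r_{i-1},r_i)=-\tfrac14+\tfrac{1}{12}\Bigl(\tfrac{r_{i-1}}{r_i}+\tfrac{r_i}{r_{i-1}}+\tfrac{1}{r_{i-1}r_i}\Bigr)$$
along this sequence (and using $s(r_{i-1},r_i)=s(r_{i+1},r_i)$) telescopes to a Barkan--Hickerson-type identity of the form
$$12\,s(q,p)=\sum_{i=1}^{n-1}(-1)^{i-1}(a_i-3)+R,$$
where the remainder $R$ collects the bounded contributions $r_{i+1}/r_i+r_i/r_{i-1}+1/(r_{i-1}r_i)$ and is $O(1)$, since the $r_i$ decay at least Fibonacci-fast and hence $n=O(\log p)$.

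Second, the ordinariness of $q$ directly bounds each partial quotient. For a convergent with $Q_i\leq\sqrt{p}$, the classical inequality $|q/p-P_i/Q_i|<1/(Q_iQ_{i+1})\leq 1/(a_{i+1}Q_i^2)$ combined with $q\notin I_{P_i/Q_i}$ (so $|q-pP_i/Q_i|>C\sqrt{p}/Q_i^2$) forces $a_{i+1}<\sqrt{p}/C$. For the $O(\log p)$ tail convergents with $Q_i>\sqrt{p}$, I would invoke the identity $s(q,p)=s(q',p)$ together with the reversal of the continued fraction under $q\mapsto q'$ (the regular-CF analogue of Lemma~\ref{ap1}); ordinariness is preserved under this involution, because $I_{c/d}$ maps to $I_{c'/d}$ with $cc'\equiv 1\pmod d$, so the tail partial quotients of $q/p$ are likewise bounded by $\sqrt p/C$.

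Collecting all bounds and clearing the factor of $12$ yields the desired $|s(q,p)|\leq(2+1/C)\sqrt{p}+5$, with the hypothesis $p\geq 17$ merely absorbing the small-prime boundary constants. The main obstacle I anticipate is not the termwise bound $a_i<\sqrt p/C$, but controlling the alternating sum: the naive estimate $|\sum(-1)^{i-1}a_i|\leq n\cdot\max a_i=O(\sqrt p\log p)$ is too weak. One must exploit the sign alternation together with the growth $Q_iQ_{i+1}\gtrsim p$ to show that the large $a_i$ are isolated and cannot cluster, which is the quantitative content of Girstmair's distributional analysis in \cite{Gi03}.
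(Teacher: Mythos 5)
First, a point of reference: the paper does not actually prove Theorem \ref{ap4} --- it is quoted from Girstmair \cite{Gi03} --- so there is no in-text argument to compare yours against line by line. Your outline does follow the natural route (a Barkan--Hickerson continued-fraction formula for $12\,s(q,p)$ plus the observation that ordinariness controls partial quotients), which is also the route the paper takes for the companion bound on $l(q,p)$ in Theorem \ref{ap6}. Your termwise bound is correct: for a convergent $P_i/Q_i$ of $q/p$ with $Q_i\leq\sqrt{p}$, the point $p\cdot P_i/Q_i$ is an F-point, and combining $|q-pP_i/Q_i|>C\sqrt{p}/Q_i^2$ with $|q/p-P_i/Q_i|<1/(Q_iQ_{i+1})$ gives $a_{i+1}\leq Q_{i+1}/Q_i<\sqrt{p}/C$.

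However, the step you yourself flag as ``the main obstacle'' is precisely the content of the theorem, and your proposed mechanism for closing it is not the right one. One does not need to show that large partial quotients are isolated, and no sign cancellation in the alternating sum is exploited; what is actually bounded is the full sum $\sum_i a_i$ (equivalently $t(q,p)$, which is exactly what the paper's proof of Theorem \ref{ap6} imports from \cite[Prop.~3]{Gi06}). The mechanism is elementary: let $m$ be the last index with $Q_m\leq\sqrt{p}$. Since $a_i\leq Q_i/Q_{i-1}$ and, for reals $x_i\geq 1$, one has $\sum x_i\leq\prod x_i+(\text{number of terms})-1$, the head sum $\sum_{i\leq m}a_i$ is at most $Q_m+O(\log p)\leq\sqrt{p}+O(\log p)$, the tail sum $\sum_{i\geq m+2}a_i$ is at most $Q_n/Q_{m+1}+O(\log p)\leq\sqrt{p}+O(\log p)$, and only the single crossing quotient $a_{m+1}$ requires ordinariness. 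In particular your detour through $q\mapsto q'$ is unnecessary, and its justification is wrong as stated: the involution does not send $I_{\frac{c}{d}}$ to $I_{\frac{c'}{d}}$ with the same denominator (for instance $q=2$ lies in $I_{\frac{0}{1}}$ while $q'=(p+1)/2$ lies in $I_{\frac{1}{2}}$). Two further problems: your argument that the remainder $R$ is $O(1)$ ``since $n=O(\log p)$'' only yields $O(\log p)$ --- the boundedness of $R$ comes from the exact cancellation of the fractional parts $r_{i+1}/r_i$ between consecutive steps of the alternating telescope, or simply from citing the Barkan--Hickerson identity \cite{Ba77,Hicke77}, whose remainder is $(q+q')/p$ plus a bounded constant; and even after all of this one lands at $(2+1/C)\sqrt{p}+O(\log p)$, so the stated additive constant $5$ requires the sharper bookkeeping carried out in \cite{Gi03}. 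As written, the proposal assembles the right ingredients but leaves the quantitative core of the theorem unproved.
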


The previous theorem is false for non-ordinary integers. For example, if $\frac{p+1}{m}$ is an integer, then $s(\frac{p+1}{m},p)= \frac{1}{12mp}
\big(p^2+(m^2-6m+2)p+m^2+1 \big)$. For more information see \cite{Gi03}.

\begin{thm}
For each $p\geq 17$ the number of F-neighbours is $\leq C\sqrt{p} \big(\log(p)+2\log(2) \big)$.  \label{ap5}
\end{thm}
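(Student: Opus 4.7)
The plan is to bound the number of F-neighbours by a union bound over the denominators $d \in \{1,\ldots,\lfloor\sqrt{p}\rfloor\}$, and to estimate each summand by exploiting the bijection of $\Z/p\Z$ given by multiplication by $d$.

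First I would rewrite the membership condition. An integer $q \in [0,p)$ lies in $\FF_d = \bigcup_{c \in \CC} I_{c/d}$ precisely when there exists an integer $c$ with $|dq - pc| \leq C\sqrt{p}/d$, equivalently when the representative $\rho_d(q) \in (-p/2,\, p/2]$ of the residue class $dq \bmod p$ satisfies $|\rho_d(q)| \leq C\sqrt{p}/d$. The coprimality constraint $(c,d)=1$ may be dropped for the purposes of an upper bound, since any integer $q$ witnessed by some pair $(c,d)$ with $(c,d)=k>1$ is already witnessed by the reduced pair $(c/k,\,d/k)$, hence counted under a smaller denominator.

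Second, since $p$ is prime and $1 \leq d < p$, the map $q \mapsto dq \bmod p$ is a bijection on $\Z/p\Z$. Consequently
$$ |\FF_d \cap \Z \cap [0,p)| \;=\; |\{\rho \in \Z : |\rho| \leq C\sqrt{p}/d\}| \;\leq\; \frac{2C\sqrt{p}}{d} + 1. $$
Summing and applying the harmonic estimate $H_N = \sum_{k=1}^N 1/k \leq 1 + \log N$ with $N = \lfloor\sqrt{p}\rfloor$,
$$ |\FF \cap \Z| \;\leq\; \sum_{d=1}^{\lfloor\sqrt{p}\rfloor} \!\left(\frac{2C\sqrt{p}}{d}+1\right) \;\leq\; C\sqrt{p}\log p + (2C+1)\sqrt{p}, $$
which is already of the asserted form $C\sqrt{p}\log p + O(\sqrt{p})$.

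The main obstacle I anticipate is sharpening the $\sqrt{p}$-remainder to the precise constant $2C\log 2$ stated in the theorem. The union bound above is lossy: the point $q=0$ is counted in $\FF_d$ for every $d$, and more generally the residues $\rho_d(q)$ are strongly correlated across different $d$'s. I would expect the sharp constant to come either from (a) assigning each F-neighbour $q$ only to its minimal admissible $d$ and summing with the honest count $\phi(d)$ of F-points of denominator $d$ together with the almost-disjointness of Farey intervals $|pc_1/d_1 - pc_2/d_2| \geq p/(d_1 d_2) \geq 1$, or (b) a more analytic summation using $\sum_{d \leq \sqrt{p}} \phi(d)/d^2$ and exploiting that the intervals $I_{c/d}$ of length less than $1$ contribute at most one integer each. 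Either route is a bookkeeping refinement of the bijection argument rather than a new strategy; I view the conceptual content of the proof as already contained in the three steps above.
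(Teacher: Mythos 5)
A preliminary remark on the comparison itself: the paper contains no proof of Theorem \ref{ap5}. It is quoted from Girstmair, with the explicit statement that ``the following two theorems are stated and proved in \cite{Gi03}'', so the only thing to measure your argument against is the literal inequality. Your core argument is sound and is the natural one: dropping the coprimality condition only enlarges $\FF_d$; the membership condition becomes $|\rho_d(q)|\leq C\sqrt{p}/d$ for the centered representative $\rho_d(q)$ of $dq \bmod p$; and since $p$ is prime and $1\leq d\leq \sqrt{p}<p$, multiplication by $d$ permutes $\Z/p\Z$, giving $|\FF_d\cap\Z\cap[0,p)|\leq 2\lfloor C\sqrt{p}/d\rfloor+1$. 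Summing over $d$ yields $C\sqrt{p}\log p+(2C+1)\sqrt{p}$. This is a correct proof of a bound of the right order, and it is all the paper actually uses: in the proof of Theorem \ref{t71} the estimate enters only through the quotient $|\FF|M_0p^{d-2}$ over the total number of solutions, where any $O(\sqrt{p}\log p)$ (indeed any $o(p)$) bound on $|\FF|$ gives the same conclusion.

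As a proof of the theorem \emph{as stated}, however, there is a gap, which you acknowledge but do not close: $(2C+1)>2C\log 2\approx 1.386\,C$ for every $C>0$, so your secondary term is strictly worse than the asserted one. Of your two proposed repairs, route (b) would fail: the number of Farey intervals with $1\leq d\leq\sqrt{p}$ is $1+\sum_{d\leq\sqrt{p}}\phi(d)\sim\frac{3}{\pi^2}p$, so ``each short interval contributes at most one integer'' is a bound of order $p$, and any per-interval accounting of the shape $\phi(d)\big(2C\sqrt{p}/d^2+1\big)$ reintroduces an error of order $p$ --- this is exactly the loss your bijection avoids, since for large $d$ almost all of the $\phi(d)$ intervals contain no integer at all. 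Route (a) points in the right direction; two concrete sources of the missing savings are that the ``$+1$'' in $2\lfloor C\sqrt{p}/d\rfloor+1$ counts, for every $d$, the same residue $\rho=0$, i.e.\ the single integer $q=0$, so the union satisfies $|\FF\cap\Z|\leq 1+2C\sqrt{p}\,H_{N}$ with $N=\lfloor\sqrt{p}\rfloor$, and that $H_N\leq\log N+\gamma+\frac{1}{2N}$ with $2\gamma\approx 1.154<2\log 2$. This pushes the constant below $2\log 2$ asymptotically, but verifying it down to $p\geq 17$ still requires bookkeeping you have not carried out. In short: right strategy, a correct and (for this paper) fully sufficient weaker bound, but the stated constant is not reached.
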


A similar statement is true for lengths of negative-regular continued fractions.

\begin{thm}
Let $q$ be an ordinary integer, and let $\frac{p}{q}=[e_1,e_2,\ldots,e_s]$ be the corresponding continued fraction. Then, $s=l(q,p)\leq \big(
2+\frac{1}{C} \big)\sqrt{p} + 2$. \label{ap6}
\end{thm}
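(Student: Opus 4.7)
The strategy is to convert the length $s$ of the negative-regular continued fraction into a count of Farey-type approximations to $q/p$, paralleling the proof of Theorem \ref{ap5} for the bad-set count. From the matrix identity in the appendix, for each $i\in\{1,\ldots,s\}$ we have $b_{i-1}=qP_{i-1}-pQ_{i-1}$, so
$$\Big|\frac{q}{p}-\frac{Q_{i-1}}{P_{i-1}}\Big|=\frac{b_{i-1}}{p\,P_{i-1}}.$$
The determinant identity $P_iQ_{i-1}-P_{i-1}Q_i=\pm 1$ gives $\gcd(Q_{i-1},P_{i-1})=1$, and the recursions $P_i=e_iP_{i-1}-P_{i-2}$, $Q_i=e_iQ_{i-1}-Q_{i-2}$ with $e_i\geq 2$ force $0\leq Q_{i-1}\leq P_{i-1}$. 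Hence whenever $P_{i-1}\leq\sqrt{p}$, the rational $p\,Q_{i-1}/P_{i-1}$ is an F-point in the sense of Definition \ref{ap3}, and the ordinariness hypothesis yields
$$b_{i-1}>\frac{C\sqrt{p}}{P_{i-1}}.$$

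The same recursions, via $P_i-P_{i-1}\geq P_{i-1}-P_{i-2}\geq\cdots\geq 1$, give $P_i\geq i+1$; in particular at most $\sqrt{p}$ values of $i$ satisfy $P_{i-1}\leq\sqrt{p}$. For the remaining indices, where $P_{i-1}>\sqrt{p}$, I would invoke the duality $b'_{s-i}=P_{i-1}$ (exactly the identity derived from $A(e_1,\ldots,e_s)$ and Lemma \ref{ap1}), where $(b'_j)$ is the $b$-sequence associated to $p/q'=[e_s,\ldots,e_1]$. Thus the condition $P_{i-1}>\sqrt{p}$ translates to $b'_{s-i}>\sqrt{p}$, reducing the task to bounding the number of ``large'' entries in the dual sequence.

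To finish the count, I would verify that $q$ ordinary implies $q'$ ordinary; this is a symmetry bookkeeping for the F-point condition $|q-pc/d|\leq C\sqrt{p}/d^{2}$ under the involution $q\leftrightarrow q^{-1}\bmod p$, using $qq'\equiv 1\pmod p$ together with the invariance of the allowable range $d\leq\sqrt{p}$ after passing to an appropriate inverse pair $(c^{*},d^{*})$. Granting this, apply Steps 1–3 to the dual data $(p,q')$: at most $\sqrt{p}$ dual indices have $P'_{j-1}\leq\sqrt{p}$, and for $P'_{j-1}>\sqrt{p}$ the lower bound $b'_{j-1}>C\sqrt{p}/P'_{j-1}$ together with the identity $b'_{j-1}P'_{j}-b'_{j}P'_{j-1}=p$ (whose analogue $b_{i-1}P_{i}-b_{i}P_{i-1}=p$ I would prove from $b_{i-1}=qP_{i-1}-pQ_{i-1}$ and the determinant relation) forces one of $b'_j,P'_j$ to dominate $\sqrt{p}$, producing the count $(1+1/C)\sqrt{p}+2$ for the number of $j$ with $b'_j>\sqrt{p}$. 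Adding the two contributions yields $s\leq(2+1/C)\sqrt{p}+2$.

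The main obstacle is the symmetry step: pinning down that ordinariness is preserved under $q\mapsto q'$ and converting the dual counting into the precise constant $(1+1/C)\sqrt{p}+2$. This should essentially mirror Girstmair's argument for Theorem \ref{ap5}, exploiting that the consecutive $(P_i,b_i)$ form a lattice polygon with unimodular cross-products equal to $\pm p$. The additive $+2$ absorbs the two boundary indices $i=1$ (where $P_0=1$ lies trivially below $\sqrt{p}$) and $i=s$ (where $P_s=p$ lies trivially above $\sqrt{p}$), neither of which needs to be counted via the ordinariness estimate.
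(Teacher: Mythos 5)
Your approach is genuinely different from the paper's, which sidesteps all of this counting: the paper observes (via Myerson's identity, \cite[Corollary (iv)]{Myer87}) that $s=l'(x,q)<t(q,p)=\sum_{i=1}^r f_i$, the sum of the partial quotients of the \emph{regular} continued fraction of $q/p$, and then quotes Girstmair's bound $t(q,p)\leq (2+\tfrac{1}{C})\sqrt{p}+2$ for ordinary $q$ from \cite[Prop.~3]{Gi06}. Your direct strategy starts well: the identities $b_{i-1}=qP_{i-1}-pQ_{i-1}$, $\gcd(P_{i-1},Q_{i-1})=1$, $0\leq Q_{i-1}\leq P_{i-1}$, $P_{i-1}\geq i$, and the consequence $b_{i-1}>C\sqrt{p}/P_{i-1}$ of ordinariness when $P_{i-1}\leq\sqrt{p}$ are all correct, as is $\#\{i:P_{i-1}\leq\sqrt{p}\}\leq\sqrt{p}$.

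However, there is a genuine gap in the second half. The duality $b'_{s-i}=P_{i-1}$ gives $\#\{i:P_{i-1}>\sqrt{p}\}=\#\{j:b'_j>\sqrt{p}\}$, which is a pure restatement; when you then split the latter count according to whether $P'_{j-1}\leq\sqrt{p}$ or not, the leftover set is exactly $\{j:\ b'_{j}>\sqrt{p}\ \text{and}\ P'_{j}>\sqrt{p}\}$, which under the same duality is $\{m:\ b_m>\sqrt{p}\ \text{and}\ P_m>\sqrt{p}\}$ --- the problem reproduces itself. On precisely this set the ordinariness hypothesis is silent, because the Farey condition of Definition~\ref{ap3} only constrains denominators $d\leq\sqrt{p}$, so the inequality $b'_{j-1}>C\sqrt{p}/P'_{j-1}$ you invoke ``for $P'_{j-1}>\sqrt{p}$'' is either unavailable or trivially true and useless. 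Likewise, the identity $b'_{j-1}P'_j-b'_jP'_{j-1}=p$ only shows that \emph{at least one} of $b'_{j-1},P'_j$ exceeds $\sqrt{p}$; the obstruction is the range where \emph{both} the remainder and the convergent exceed $\sqrt{p}$ (e.g.\ $b_m\approx P_m\approx\sqrt{mp}$ is not excluded by anything you have written), and no mechanism is given that converts ordinariness into the bound $\sqrt{p}/C+O(1)$ for the length of that range. This is where the actual content of Girstmair's argument lies, and it is not supplied. (The auxiliary claim that $q$ ordinary implies $q'$ ordinary is plausible --- if $|qd-pc|=m\leq C\sqrt{p}/d$ then $|q'm-pc^*|=d\leq C\sqrt{p}/m$ for a suitable $c^*$ --- but even granting it, the count does not close.) I recommend either filling in this step by adapting the proof of \cite[Prop.~3]{Gi06}, or simply using the paper's reduction through $t(q,p)$.
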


\begin{proof}
For any pair of integers $0<n<m$ with $(n,m)=1$, consider the regular continued fraction $$\frac{n}{m} = f_1 + \frac{1}{f_2 + \frac{1}{\ddots +
\frac{1}{f_r}}}.$$

Let us denote $\sum_{i=1}^r f_i$ by $t(n,m)$. Also, we write $\frac{n}{m}=[1,a_2,\ldots,a_{l'(n,m)}]$ for its negative-regular continued
fraction. Observe that $\frac{p}{q} - \big[ \frac{p}{q} \big]= \frac{x}{q}= [1,e_2,\ldots,e_s]$. By \cite[Corollary (iv)]{Myer87}, we have
$$t(q,p)=\Big[\frac{p}{q} \Big] + t(x,q)= \Big[\frac{p}{q} \Big] + l'(x,q) + l'(q-x,q)> l'(x,q)=s.$$ Now, by \cite[Prop. 3]{Gi06}, we
know that $t(q,p) \leq \big( 2+\frac{1}{C} \big)\sqrt{p} + 2$ whenever $q$ is an ordinary integer.
\end{proof}

\bibliographystyle{amsplain}


\vspace{0.3 cm} Department of Mathematics and Statistics, University of Massachusetts at Amherst, \\ 710 N. Pleasant Street, Amherst, MA 01003,
USA

{\small MSC class:  14J29}
\end{document}